\documentclass[11pt]{article}

\usepackage{amssymb,amsmath,amsfonts,amsthm}
\usepackage[shortlabels]{enumitem}
\usepackage{latexsym}
\usepackage{graphics}
\usepackage{indentfirst}
\usepackage{tikz}
\usepackage{mathtools}
\usepackage{hyperref}
\usepackage{multicol}
\usepackage[capitalise, noabbrev, nameinlink]{cleveref}
\allowdisplaybreaks

\newtheorem{innercustomthm}{Theorem}
\newenvironment{customthm}[1]
  {\renewcommand\theinnercustomthm{#1}\innercustomthm}
  {\endinnercustomthm}

\newtheorem*{thm*}{Theorem}
\newtheorem{thm}{Theorem}
\newtheorem{lem}[thm]{Lemma}

\newtheorem{obs}[thm]{Observation}
\newtheorem{cor}[thm]{Corollary}

\newtheorem{cl}[thm]{Claim}
\newtheorem{ques}[thm]{Question}
\newtheorem{define}[thm]{Definition}

\newtheorem{rem}[thm]{Remark}

\newcommand{\N}{\mathbb{N}}
\newcommand{\Z}{\mathbb{Z}}

\newcommand{\R}{\mathbb{R}}

\newcommand{\col}{\mathrm{col}}
\newcommand{\Sh}{\textsc{Shade}}
\newcommand{\ShSa}{\textsc{ShadeSave}}
\newcommand{\Sa}{\textsc{Save}}
\newcommand{\DSa}{\textsc{DelSave}}

\begin{document}

\title{Fractional Strict Degeneracy of Graphs}

\author{Daniel Dominik$^1$ and Jeffrey A. Mudrock$^2$}

\footnotetext[1]{Department of Mathematics, Thornton Township High School, Harvey, IL 60426.  E-mail:  {\tt {ddominik@hawk.iit.edu}}}
\footnotetext[2]{Department of Mathematics and Statistics, University of South Alabama, Mobile, AL 36688. E-mail: {\tt mudrock@southalabama.edu}}

\maketitle

\begin{abstract}

DP-coloring (also called correspondence coloring) is a generalization of list coloring introduced by  Dvo\v{r}\'{a}k and Postle in 2015.  The DP-chromatic number of a graph $G$, $\chi_{_{DP}}(G)$, is the analogue of the chromatic number of $G$ in the DP context and is bounded above by the degeneracy of $G$ plus one.  Over the last two years a plethora of authors have introduced variations on the notion of degeneracy and used these new ideas to give improved bounds on the DP-chromatic number of certain families of graphs. 

Fractional DP-coloring is a generalization of fractional list coloring introduced by Bernshteyn, Kostochka, and Zhu in 2019.  In this paper we introduce two analogues of the degeneracy of a graph to the fractional context, each of which bound its fractional DP-chromatic number from above.  We use these analogues to bound the fractional DP-chromatic number of a variety of graphs including unicyclic graphs, some complete bipartite graphs, and sparse graphs.

\medskip

\noindent {\bf Keywords.}  strict degeneracy, weak degeneracy, fractional DP-coloring, DP-coloring, correspondence coloring.

\noindent \textbf{Mathematics Subject Classification.} 05C15, 05C69

\end{abstract}

\section{Introduction}\label{intro}

In this paper all graphs are nonempty, finite, simple graphs unless otherwise noted.  Generally speaking we follow West~\cite{W01} for terminology and notation.  The set of natural numbers is $\N = \{1,2,3, \ldots \}$.  For $m \in \N$, we write $[m]$ for the set $\{1, \ldots, m \}$, and we adopt the convention that $[0] = \emptyset$.  For $a,b\in\Z$, we write $[a:b]$ for the set $\{x\in\Z : a\leq x\leq b\}$.  We adopt the convention that $\sum_{x\in\emptyset}x=0$.  If $G$ is a graph and $S, U \subseteq V(G)$, we use $G[S]$ for the subgraph of $G$ induced by $S$, and we use $E_G(S, U)$ for the subset of $E(G)$ with one endpoint in $S$ and the other endpoint in $U$.  For graph $G$ and $v\in V(G)$, $G-v=G[V(G) \setminus \{v\}]$; likewise, for  $S\subseteq V(G)$, $G-S=G[V(G)\setminus S]$.  For graphs $G$ and $H$ we say \emph{$G$ is a subdivision of $H$} if $G$ can be obtained by replacing the edges of $H$ with pairwise internally disjoint paths each of which have length at least one.  For $v \in V(G)$, we write $d_G(v)$ for the degree of vertex $v$ in the graph $G$, and we write $N_G(v)$ for the neighborhood of vertex $v$ in the graph $G$.  Also, for $S \subseteq V(G)$, we let $N_G(S) = \bigcup_{v \in S} N_G(v)$.  We use $K_{n,m}$ to denote the complete bipartite graphs with partite sets of size $n$ and $m$.  We use $K_{n_1,\ldots,n_k}$ to denote the complete $k$-partite graphs with partite sets of size $n_1$,$\ldots$, and $n_k$.  For $k \in \N$ and $G=P_k$, when we say that \emph{the vertices of $G$ in order are $v_1,\ldots,v_k$} we mean that two vertices are adjacent in $G$ if and only if they appear consecutively in this ordering.  For $k\geq3$ and $G=C_k$, when we say that \emph{the vertices of $G$ in cyclic order are $v_1,\ldots,v_k$} we mean that two vertices are adjacent in $G$ if and only if they appear consecutively in this ordering or if they are $v_1$ and $v_k$.  We say $G$ is \emph{unicyclic} if $G$ is connected and contains exactly one cycle.  

\subsection{DP-Coloring and Fractional DP-Coloring}

In this paper we present two extensions of the notion of degeneracy that bound the fractional DP-chromatic number of a graph, and consequently its fractional chromatic number, from above.  We begin by presenting some classical notions and briefly reviewing fractional DP-coloring.  Given a graph $G$, in the classical vertex coloring problem we wish to color the elements of $V(G)$ with colors from the set $[k]$ so that adjacent vertices receive different colors, a so-called \emph{proper $k$-coloring}.  We say $G$ is \emph{$k$-colorable} when a proper $k$-coloring of $G$ exists.  The \emph{chromatic number} of $G$, denoted $\chi(G)$, is the smallest $k$ such that $G$ is $k$-colorable.

A \emph{set coloring} of a graph $G$ is a function that assigns a set to each vertex of $G$ such that the sets assigned to adjacent vertices are disjoint.  For $a, b \in \N$ with $a \geq b$, an \emph{$(a,b)$-coloring} of graph $G$ is a set coloring $f$ of $G$ such that the codomain of $f$ is the set of $b$-element subsets of $[a]$.  We say that $G$ is \emph{$(a,b)$-colorable} when an $(a,b)$-coloring of $G$ exists.  The \emph{fractional chromatic number} of $G$, denoted $\chi^*(G)$, is defined by $\chi^*(G) = \inf \{ a/b : \text{$G$ is $(a,b)$-colorable} \}.$  Since any graph $G$ is $(\chi(G),1)$-colorable, we have that $\chi^*(G) \leq \chi(G)$.  Moreover, if graph $G$ has an edge, it is not $(a,b)$-colorable whenever $a<2b$ which means $2\leq \chi^*(G)$.   It is also well known that the infimum in the definition of $\chi^*(G)$ is actually a minimum~\cite{SU97}.

List coloring is a well studied variation on classical vertex coloring that was introduced in the 1970s~\cite{ERT, V}.  For graph $G$ we say that a function $L:V(G)\to 2^\N$ is a \emph{list assignment of $G$}.  A \emph{proper $L$-coloring} of $G$ is a proper coloring of $G$ that assigns to each $v\in V(G)$ a color from $L(v)$.  If $k\in\N$ and $L$ is a list assignment such that $|L(v)|=k$ for each $v\in V(G)$, we say \emph{$L$ is a $k$-assignment of $G$}. We say $G$ is \emph{$k$-choosable} if for each $L$ that is a $k$-assignment of $G$, there exists a proper $L$-coloring of $G$. 
The \emph{list chromatic number} of $G$, denoted $\chi_\ell(G)$, is the smallest $k$ for which $G$ is $k$-choosable.  Since a $k$-assignment can assign the same $k$ colors to every vertex of a graph, $\chi(G) \leq \chi_\ell(G)$.

Fractional coloring can be extended to the list context in a natural way.  Given an $a$-assignment $L$ for graph $G$ and $b \in \N$ such that $a \geq b$, we say that $f$ is an \emph{$(L,b)$-coloring} of $G$ if $f$ is a set coloring of $G$ such that for each each $v \in V(G)$, $f(v) \subseteq L(v)$ and $|f(v)| = b$.  We say that $G$ is \emph{$(L,b)$-colorable} if an $(L,b)$-coloring of $G$ exists.  Also, for $a, b \in \N$ with $a \geq b$, graph $G$ is \emph{$(a,b)$-choosable} if $G$ is $(L,b)$-colorable whenever $L$ is an $a$-assignment for $G$.  The \emph{fractional list chromatic number} of $G$, denoted $\chi_\ell^*(G)$, is defined by $\chi_\ell^*(G) = \inf \{ a/b : \text{$G$ is $(a,b)$-choosable} \}.$  Since an $(a,b)$-choosable graph is $(a,b)$-colorable, $\chi^*(G) \leq \chi_\ell^*(G)$.  In 1997, Alon, Tuza, and Voigt~\cite{AT97} famously proved that for any graph $G$, $\chi_\ell^*(G) = \chi^*(G)$.  Moreover, they showed that for any graph $G$, there is an $M \in \N$ such that $G$ is $(M, M/\chi^*(G))$-choosable.  So, the infimum in the definition of $\chi_\ell^*(G)$ is actually a minimum.

In~\cite{ERT} Erd\H{o}s et al. asked: If $G$ is $(a,b)$-choosable, does it follow that $G$ is $(at,bt)$-choosable for each $t \in \N$?  Tuza and Voigt~\cite{TV96} showed that the answer to this question is yes when $a=2$ and $b=1$.  However, in~\cite{DH18}, a graph that is $4$-choosable but not $(8,2)$-choosable is constructed.  Interestingly, for the notion we develop in this paper the answer to the analogue of this question is yes (see Theorem~\ref{thm:ks,kt-degenerate}).

In 2015, Dvo\v{r}\'{a}k and Postle~\cite{DP} introduced DP-coloring (they called it correspondence coloring).  Intuitively, DP-coloring is a generalization of list coloring where each vertex in the graph still gets a list of colors but identification of which colors are different can vary from edge to edge.  We now give the formal definition. Suppose $G$ is a graph.  A \emph{cover} of $G$ is a pair $\mathcal{H} = (L,H)$ consisting of a graph $H$ and a function $L: V(G) \rightarrow 2^{V(H)}$ satisfying the following four conditions:

\vspace{5mm}

\noindent(1) the set $\{L(u) : u \in V(G) \}$ is a partition of $V(H)$ with $|V(G)|$ parts; \\
(2) for every $u \in V(G)$, $L(u)$ is an independent set of vertices in $H$; \\
(3) if $E_H(L(u),L(v))$ is nonempty, then $uv \in E(G)$; \\
(4) if $uv \in E(G)$, then $E_H(L(u),L(v))$ is a matching (the matching may be empty).

\vspace{5mm}

Suppose $\mathcal{H} = (L,H)$ is a cover of $G$.  We say $\mathcal{H}$ is \emph{$k$-fold} if $|L(u)|=k$ for each $u \in V(G)$.  An \emph{$\mathcal{H}$-coloring} of $G$ is an independent set $I\subseteq V(H)$ such that $|I \cap L(u)|=1$ for each $u \in V(G)$.  The \emph{DP-chromatic number} of a graph $G$, denoted $\chi_{_{DP}}(G)$, is the smallest $k \in \N$ such that $G$ admits an $\mathcal{H}$-coloring for every $k$-fold cover $\mathcal{H}$ of $G$.  Since for any $k$-assignment $L$ for $G$, there exists a $k$-fold cover $\mathcal{H}$ of $G$ such that $G$ is $L$-colorable if and only if it is $\mathcal{H}$-colorable~\cite{DP}, we know that $\chi(G) \leq \chi_\ell(G) \leq \chi_{_{DP}}(G).$

We now turn our attention to fractional DP-coloring.  Suppose $\mathcal{H} = (L,H)$ is an $a$-fold cover of $G$ and $b \in \N$ satisfies $a \geq b$.  Then, $G$ is \emph{$(\mathcal{H},b)$-colorable} if there is an independent set $S \subseteq V(H)$ such that $|S \cap L(v)| \geq b$ for each $v \in V(G)$.  Equivalently, one could require $|S \cap L(v)| = b$ for each $v \in V(G)$; in this case we call $S$ an \emph{independent $b$-fold transversal of $\mathcal{H}$}. We refer to $S$ as an \emph{$(\mathcal{H},b)$-coloring} of $G$.  Note that an $\mathcal{H}$-coloring of $G$ is also an $(\mathcal{H},1)$-coloring of $G$.  For $a,b \in \N$ and $a \geq b$, we say $G$ is \emph{$(a,b)$-DP-colorable} if for any $a$-fold cover of $G$, $\mathcal{H}$, $G$ is $(\mathcal{H},b)$-colorable.  The \emph{fractional DP-chromatic number} of $G$, denoted $\chi_{_{DP}}^*(G)$, is defined by $\chi_{_{DP}}^*(G) = \inf \{ a/b : \text{$G$ is $(a,b)$-DP-colorable} \}.$
It's easy to show that an $(a,b)$-DP-colorable graph is $(a,b)$-choosable, and any graph $G$ is $(\chi_{_{DP}}(G),1)$-DP-colorable.  So,
$$\chi^*(G) = \chi_{\ell}^*(G) \leq \chi_{_{DP}}^*(G) \leq \chi_{_{DP}}(G).$$ 
 In~\cite{BKZ} the following result is proven.

\begin{thm} [\cite{BKZ}] \label{thm: fracDP2}
Let $G$ be a connected graph.  Then, $\chi_{_{DP}}^*(G) \leq 2$ if and only if $G$ contains no odd cycles and at most one even cycle.  Furthermore, if $G$ contains no odd cycles and exactly one even cycle, then $\chi_{_{DP}}^*(G)=2$ even though $2$ is not contained in the set $\{ a/b : \text{$G$ is $(a,b)$-DP-colorable} \}$.
\end{thm}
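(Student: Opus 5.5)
We prove the two directions separately and then establish the statement about the set of ratios.

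\emph{Necessity.} If $G$ contains an odd cycle $C$, then $\chi_{_{DP}}^*(G)\ge \chi^*(C)>2$, since $\chi^*(C_{2k+1})=2+1/k$ and the fractional DP-chromatic number is monotone under taking subgraphs. Now suppose $G$ is bipartite and contains two even cycles. Since $G$ is connected, it contains a subgraph $F$ that is a subdivision of a theta graph or of a dumbbell/figure-eight; in every case $F$ is $2$-connected, or $F$ is two cycles joined by a path. The plan is to build, for each $b$ and each $\epsilon$ small enough that $a=\lfloor (2+\epsilon) b\rfloor$, an $a$-fold cover of $F$ with no $(\mathcal H,b)$-coloring. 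The building block is the cover of an even cycle whose matchings are all ``straight'' except one edge, which carries a cyclic shift by some $s$. Going around the cycle, an independent $b$-fold transversal with $a=2b+r$ imposes a rotation constraint, and this forces a loss of roughly $s$-dependent slack. Two independent cycles each force such a loss, and they interact through the shared path. Counting the colours available on the branch vertices then shows that $2b+r$ colours are insufficient unless $r\ge cb$ for some constant $c>0$. This yields $\chi_{_{DP}}^*(G)\ge 2+c>2$.

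\emph{Sufficiency.} Let $G$ contain no odd cycle and at most one even cycle. If $G$ is a tree, greedily colour from a root: each vertex needs $b$ colours avoiding the at most $b$ colours matched to its parent's chosen set, so $G$ is $(2b,b)$-DP-colourable and $\chi_{_{DP}}^*\le 2$. If $G$ is unicyclic with cycle $C_{2k}$, it suffices to handle the cycle, since pendant trees are handled greedily afterwards. Given an $a$-fold cover with $a=2b+1$ and $b$ large, colour along the path $v_1,\dots,v_{2k}$. At each step choose $b$ colours among the $b+1$ available, keeping flexibility, and at the final vertex use a counting or Hall-type argument to find $b$ colours avoiding those of both $v_{2k-1}$ and $v_1$. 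Because one extra colour per vertex allows the choices to be adjusted around the cycle, the construction should succeed once $b\ge k$ or so. This gives $(2b+1,b)$-DP-colourability, so the ratio $2+1/b\to 2$ and hence $\chi_{_{DP}}^*\le 2$.

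\emph{Two is not attained.} We show that an even cycle with a single twisted edge is not $(2b,b)$-DP-colourable. Take the straight matching $i\leftrightarrow i$ on every edge except one, where the matching is $i\leftrightarrow i+1 \pmod{2b}$. With exactly $2b$ colours, adjacent vertices must receive complementary $b$-sets. Going around the even cycle, this forces $S_{v_1}$ to equal its own shift by one, which is impossible for a $b$-subset of $\Z_{2b}$ with $0<b<2b$.

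The main obstacle is the lower bound for two even cycles. This requires an explicit cover together with a quantitative slack-counting argument, carried out separately in the theta case and the disjoint-cycles case.
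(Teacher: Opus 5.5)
The paper does not prove this statement; it is quoted from \cite{BKZ}. Your odd-cycle case, your tree case, and your proof that $2$ is not attained are correct; in the last, the single twisted edge forces $S_{v_1}=S_{v_1}+1$ in $\Z_{2b}$, and $\chi^*_{_{DP}}(G)\ge 2$ is trivial. The unicyclic case is only a sketch (``should succeed once $b\ge k$''). It is in fact true for every $b$, by a short argument:
\begin{itemize}
\item extend the matchings to perfect ones and relabel along the path $v_1\cdots v_{2k}$, so that only the closing edge carries a permutation $\sigma$;
\item let $X$ be a union of whole cycles of $\sigma$ plus an initial arc of one more cycle, so that $|\sigma^{-1}(X)\setminus X|\le 1$;
\item alternate $X$ with a $b$-subset of $[2b+1]\setminus(X\cup\sigma^{-1}(X))$.
\end{itemize}
Alternatively, Theorem~\ref{thm:c2r} together with Corollary~\ref{cor:k-core} and Lemma~\ref{lem:STDP} gives $(2t+1,t)$-DP-colorability for all $t$.

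The real gap is necessity for two even cycles. You do not produce any cover or inequality there, and the building block you propose cannot give a deficit linear in $b$.
\begin{itemize}
\item Take a cover of the theta/dumbbell $F$ whose matchings are all cyclic shifts of $\Z_a$. Relabel along a spanning tree so that only the two non-tree edges carry shifts $s_1,s_2$.
\item Since $F$ is bipartite, give one $b$-set $X$ to every vertex of one colour class, and one $b$-set $Y\subseteq[a]\setminus\bigl(X\cup(X+s_1)\cup(X+s_2)\bigr)$ to the other. This is an $(\mathcal H,b)$-coloring as soon as $|(X+s_1)\setminus X|+|(X+s_2)\setminus X|\le r$.
\item Write $\langle s_1,s_2\rangle\cong\Z^2/\Lambda$. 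Cut a reduced fundamental parallelogram of $\Lambda$ in half by a line parallel to a shortest vector of $\Lambda$, whose length is $O(\sqrt a)$, and add whole cosets of $\langle s_1,s_2\rangle$. This yields a $b$-set $X$ for which both terms are $O(\sqrt a)$.
\end{itemize}
So every shift cover is already colourable with $a=2b+O(\sqrt b)$. Commuting permutations always have almost-invariant half-sets, so no choice of shifts gives $r\ge cb$, and such covers can never certify $\chi^*_{_{DP}}(G)\ge 2+\epsilon$ for a fixed $\epsilon>0$.

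The missing idea is to use non-commuting, expanding matchings; random ones are the easiest. Take $F$ minimal, so that $|E(F)|=|V(F)|+1$. Put independent uniformly random perfect matchings on its edges, and let $a=2b+r$. The expected number of independent $b$-fold transversals is
\[
\binom{a}{b}^{|V(F)|}\left(\frac{\binom{a-b}{b}}{\binom{a}{b}}\right)^{|E(F)|}=\frac{\binom{b+r}{r}^{|E(F)|}}{\binom{2b+r}{b}}.
\]
If $r=0$ this equals $1/\binom{2b}{b}<1$. If $1\le r\le\epsilon b$, then $b\ge 1/\epsilon$, and the expectation is at most $(2b+1)4^{-b}\bigl(e(1+\epsilon)/\epsilon\bigr)^{\epsilon b|E(F)|}$. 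This is less than $1$ once $\epsilon$ is small in terms of $|E(F)|$.

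Hence $F$, and therefore $G$, is not $(a,b)$-DP-colorable whenever $a\le(2+\epsilon)b$, which gives $\chi^*_{_{DP}}(G)\ge 2+\epsilon$. This first-moment count over random covers is the same kind of argument that underlies the $d/(2\ln d)$ lower bound of \cite{BKZ} quoted in the introduction.
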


So, unlike the fractional chromatic number and fractional list chromatic number, the infimum in the definition of the fractional DP-chromatic number is not always a minimum.  In~\cite{BKZ} it is shown that $\chi^*_{_{DP}}(G)\geq d/(2\ln d)$, whenever $d \geq 4$ and $d$ is the maximum average degree of $G$, which means there are complete bipartite graphs with fractional DP-chromatic number arbitrarily higher than $\chi^*(G) = 2$.  Thus, in general, the fractional DP-chromatic number of a graph cannot be bounded above by a function its fractional chromatic number.  In 1997, Alon, Tuza, and Voigt~\cite{AT97} showed that $C_{2r+1}$ is $(2r+1,r)$-choosable, and this was recently extended to the DP context.

\begin{thm} [\cite{DKM25}] \label{thm: oddcycle}
$C_{2r+1}$ is $(2r+1,r)$-DP-colorable.  Consequently, $2+1/r = \chi^*(C_{2r+1})= \chi_{_{\ell}}^*(C_{2r+1}) = \chi_{_{DP}}^*(C_{2r+1})$.
\end{thm}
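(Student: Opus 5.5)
The plan is to prove the main claim, that $C_{2r+1}$ is $(2r+1,r)$-DP-colorable, and then read off the chain of equalities. For the equalities we use the known value $\chi^*(C_{2r+1}) = 2+1/r$: the fractional clique bound gives $\chi^*(C_{2r+1})\ge n/\alpha = (2r+1)/r$. Combined with $\chi^* = \chi^*_\ell \le \chi^*_{DP} \le (2r+1)/r$, once the main claim holds, all three quantities equal $2+1/r$. So everything reduces to the coloring statement.

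\textbf{Normalizing the cover.} Let $\mathcal{H}=(L,H)$ be a $(2r+1)$-fold cover of $C_{2r+1}$, with vertices $v_1,\dots,v_{2r+1}$ in cyclic order. Adding edges to $H$ only makes coloring harder, so we may assume each matching $E_H(L(v_i),L(v_{i+1}))$ is perfect. Along the path $v_1,\dots,v_{2r+1}$ we rename colors so that $L(v_i)=\{(v_i,c): c\in[2r+1]\}$ and $(v_i,c)$ is matched to $(v_{i+1},c)$ for $i\le 2r$. The closing edge $v_{2r+1}v_1$ then matches $(v_{2r+1},c)$ with $(v_1,\pi(c))$ for some permutation $\pi$ of $[2r+1]$. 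In this language, a $b$-fold independent transversal is a choice of $r$-sets $S_1,\dots,S_{2r+1}\subseteq[2r+1]$ with $S_i\cap S_{i+1}=\emptyset$ for $i\le 2r$ and $\pi(S_{2r+1})\cap S_1=\emptyset$.

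\textbf{Construction.} We handle the path constraints by alternating complementary-ish sets and use the single closing constraint flexibly. Fix any $r$-set $A=S_1$. For $i\le 2r+1$, each $S_{i+1}$ must be an $r$-subset of $[2r+1]\setminus S_i$, which has $r+1$ elements. So at each step we drop exactly one element of the complement. Then $S_{2r+1}$ ranges over a large family, and we need it to avoid $\pi^{-1}(S_1)$.

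To analyze this, choose $S_1=A$ and write $S_{2}=\overline{A}\setminus\{x_1\}$, $S_3=\overline{S_2}\setminus\{x_2\}=A\cup\{x_1\}\setminus\{x_2\}$, and so on. After $2r$ steps, $S_{2r+1}=A\cup\{x_1,x_3,\dots\}\setminus\{x_2,x_4,\dots\}$ is obtained from $A$ by $r$ "swap" moves, each of which exchanges an element of the current set for an outside element. This happens with a lot of freedom: each swap of the form (drop $x_{2j}$ in, $x_{2j-1}$ out) can be a trivial swap with $x_{2j}=x_{2j-1}$, or any genuine exchange.

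It follows that the reachable $S_{2r+1}$ are exactly the $r$-sets $T$ with $|T\setminus A|\le r$, which is all $r$-sets. Since $\binom{2r+1}{r}$ is large, we only need some $r$-set $T$ disjoint from $\pi^{-1}(A)$. The complement of $\pi^{-1}(A)$ has $r+1$ elements, so such a $T$ exists. Then we build the swap sequence from $A$ to $T$: perform $|T\setminus A|\le r$ genuine swaps and pad the rest with trivial ones. We must check that a trivial swap is legal, i.e.\ that $x_{2j-1}$ may be re-added, which holds since it lies in the complement of $S_{2j}$.

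\textbf{Main obstacle.} The step needing care is verifying that the alternating sequence really realizes an arbitrary exchange at each double step while staying within the constraint $S_{i+1}\subseteq \overline{S_i}$. In particular, a genuine swap (add $a\notin$ current set, remove $b\in$ current set) must be implemented by choosing $x_{2j-1}=a$ at the odd step and $x_{2j}=b$ at the even step. We will check this carefully using complements. The count of $r$ double steps, coming from $2r$ path edges, exactly matches the maximum needed distance $r$, which is why $2r+1$ vertices is the threshold.
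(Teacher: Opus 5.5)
Your proof is correct, and it takes a genuinely different route from the paper.

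\textbf{Your route.} After you normalize to perfect matchings and relabel along the spanning path $v_1,\dots,v_{2r+1}$, the path constraints become $S_{i+1}\subseteq[2r+1]\setminus S_i$. So each double step $S_{2j-1}\to S_{2j+1}$ is exactly one exchange, possibly trivial. In other words, it is a move of length at most one in the Johnson graph on $r$-subsets of $[2r+1]$. Any two $r$-sets are at distance $|A\setminus T|\le r$, and you have exactly $r$ double steps. Hence you may prescribe $S_{2r+1}$ to be any $r$-subset of the $(r+1)$-set $[2r+1]\setminus\pi^{-1}(S_1)$, which handles the closing edge.

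There are a few minor slips that do not affect correctness. The range should be $i\le 2r$, not $i\le 2r+1$. Your parenthetical has \emph{in} and \emph{out} reversed. Still, your explicit choice $x_{2j-1}=a$, $x_{2j}=b$ is right. The trivial swap $x_{2j}=x_{2j-1}$ is also legal, since $x_{2j-1}\in S_{2j-1}\cup\{x_{2j-1}\}$.

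\textbf{The paper's route.} The paper gets the statement as a by-product of Theorem~\ref{thm:c2r+1}. It exhibits a complete restricted legal sequence for $C_{2r+1}$ with shield $r+1$ and target $r$. Concretely, it shades $v_{2r}$ and then $v_{2r+1}$ $r$ times each. This leaves a $P_{2r-1}$ with endpoint shields $1$, which Lemma~\ref{lem:path} handles with save operations. Lemma~\ref{lem:STDP} then turns $ST^{(3)}$-$(S,T)$-degeneracy into $(S+T,T)$-DP-colorability by peeling off one color at a time on an arbitrary cover, with no normalization needed.

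\textbf{What each buys.} Your argument is shorter, fully explicit and self-contained. It is, however, tailored to a graph whose cover reduces to a single twisted edge. The paper's route costs more machinery but proves the stronger fact $ST^{(4)*}(C_{2r+1})=2+1/r$. Its ingredients (Lemma~\ref{lem:path}, Corollary~\ref{cor:paths}, Lemma~\ref{lem:RedToLinForest}) also feed into Theorem~\ref{thm:subdivisions}. That theorem covers subdivisions with many cycles, where one closing permutation no longer captures the cover.
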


Interestingly the DP analogue of the aforementioned $(a,b)$-choosability question in~\cite{ERT} remains open.

\begin{ques} \label{ques: Erdos2}
If $G$ is $(a,b)$-DP-colorable, does it follow that $G$ is $(at,bt)$-DP-colorable for each $t \in \N$?
\end{ques}

In~\cite{DKM25} a probabilistic argument is used to prove the following.

\begin{thm} [\cite{DKM25}] \label{thm:completeMultipartite}
	Suppose $G=K_{n_1,n_2,\ldots,n_m}$.  Let $p^*_m=1$ and $p^*_j$ be the solution in $(0,1)$ to $p=p^*_{j+1}(1-p)^{n_j}$ for all $j\in[m-1]$.  Then $\chi^*_{_{DP}}(G)=\chi^*_{_{DP}}(K_{n_1,n_2,\ldots,n_m})\leq\frac{1}{p^*_1}$.
\end{thm}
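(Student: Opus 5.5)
The plan is a probabilistic construction that processes the partite sets one at a time. Write $V_1,\dots,V_m$ for the partite sets, with $|V_j|=n_j$. Fix an $a$-fold cover $\mathcal{H}=(L,H)$ of $G$ with $a$ large, and aim for an independent set $S\subseteq V(H)$ with $|S\cap L(v)|\ge b$ for every $v$, where $b/a$ is arbitrarily close to $p_1^*$. This gives $\chi^*_{_{DP}}(G)\le 1/p_1^*$; the bound is stated as an infimum, so only approach is needed. The ordering is the natural one: vertices of $V_1$ choose first with the smallest density, and $V_m$ chooses last from whatever survives.

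\textbf{Construction.} Process $V_1,V_2,\dots,V_m$ in order.
\begin{itemize}
\item For $v\in V_1$, put each color of $L(v)$ into $S$ independently with probability $p_1=p_1^*$.
\item For $j\ge 2$ and $v\in V_j$, call a color $c\in L(v)$ \emph{available} if it has no $H$-neighbor already placed in $S$ by a vertex of $V_1\cup\dots\cup V_{j-1}$.
\item Vertices in the same $V_j$ are nonadjacent in $G$, so $E_H(L(u),L(v))$ is empty for $u,v\in V_j$. Hence the choices within one part never conflict.
\item Vertices of $V_m$ take all their available colors. Vertices of $V_j$ with $j<m$ take each available color independently with a suitable probability.
\end{itemize}

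\textbf{Heuristic.} The recursion is naturally read through the fraction of a part's list that survives, so it is cleanest to run the construction backwards. Suppose, conditionally on what happens later, each color of a vertex in $V_{j+1}$ survives with probability $p^*_{j+1}$. A color $c\in L(v)$ with $v\in V_j$ has at most one $H$-neighbor in each $L(u)$, $u\in V_{j'}$. Thus its survival against the $n_j$ vertices of $V_j$, which are all adjacent in $G$ to a vertex of a different part, behaves like $(1-p)^{n_j}$. The fixed-point equation $p=p^*_{j+1}(1-p)^{n_j}$ expresses that the density of colors finally kept in $V_j$ equals the surviving fraction times the density passed on. A cleaner way to organize this is by induction on $m$. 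Delete $V_1$ after it selects its random sets $S_v$ of density $p$. In the remaining cover, for each vertex of $K_{n_2,\dots,n_m}$, restrict to the colors with no neighbor in $\bigcup_{v\in V_1}S_v$. The expected number of surviving colors is at least $a(1-p)^{n_1}$. By induction the remaining graph is fractionally colorable at density $p_2^*$ relative to its surviving list size. One checks that the kept density for $V_1$ and the surviving density for later parts balance exactly when $p=p^*_2(1-p)^{n_1}$, after reindexing the recursion so that it matches.

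\textbf{Concentration.} Take $a\to\infty$. Each color's survival depends on only boundedly many independent choices, at most $\sum n_j$ of them, so Chernoff bounds and a union bound over the finitely many vertices show that with positive probability every vertex keeps at least $(p^*-\varepsilon)a$ colors. Uniformizing to exactly $b$ colors and shrinking lists to a common size gives $(a,b)$-DP-colorability with $a/b\to 1/p_1^*$.

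\textbf{Main obstacle.} The hardest step is the bookkeeping in the induction. After deletion the surviving lists have varying sizes, and the inductive hypothesis must be applied to a cover whose list sizes are only approximately equal. I would handle this by truncating every list to the common lower bound $\lfloor(1-p)^{n_1}a(1-\varepsilon)\rfloor$. I would then verify that the errors compound only by factors of $(1\pm\varepsilon)$ over the $m$ steps, and let $\varepsilon\to0$.
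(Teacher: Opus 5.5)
\textbf{Gap in the inductive step.} Your mechanism is the right one. Each vertex of $V_1$ keeps every color independently with probability $p=p_1^*$. A color of a vertex outside $V_1$ has at most one $H$-neighbor in each of the $n_1$ lists $L(u)$, $u\in V_1$, so it survives with probability at least $(1-p)^{n_1}$. Balancing $pa$ against $p_2^*(1-p)^{n_1}a$ gives exactly the stated recursion $p_1^*=p_2^*(1-p_1^*)^{n_1}$; no reindexing is needed. The paper only cites this theorem from \cite{DKM25}, so I am judging your outline on its own.

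The step that fails as written is ``by induction the remaining graph is fractionally colorable at density $p_2^*$ relative to its surviving list size.'' If you induct on the theorem itself for $K_{n_2,\dots,n_m}$, you only get $\chi^*_{_{DP}}(K_{n_2,\dots,n_m})\le 1/p_2^*$. That means $(a',b')$-DP-colorability for \emph{some} pairs with $a'/b'$ near $1/p_2^*$. Your random step needs more: colorability of an arbitrary cover whose common list size is the number $\lfloor(1-p)^{n_1}a(1-\varepsilon)\rfloor$ that the step itself produces. That size must be large, since you send $a\to\infty$ for concentration. Passing from some $(a',b')$ to every large list size at the same ratio is essentially the scaling question the paper records as open (Question~\ref{ques: Erdos2}). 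Truncating lists to a common size, which you call the main obstacle, does not address this.

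\textbf{Fix.} Induct on a statement that is uniform in the list size. For $j\in[m]$ let $P(j)$ say: for every $\varepsilon>0$ there is $N$ such that for every $a\ge N$, every $a$-fold cover $(L,H)$ of $K_{n_j,\dots,n_m}$ has an independent set meeting each $L(v)$ in at least $(p_j^*-\varepsilon)a$ vertices.
\begin{itemize}
\item $P(m)$ is trivial, since the graph is edgeless and $p_m^*=1$.
\item Your step proves $P(j+1)\Rightarrow P(j)$. Given $\varepsilon$, take $\varepsilon'=\varepsilon/2$ and let $N'$ come from $P(j+1)$ with $\varepsilon'$. For $a$ large, with positive probability every $v\in V_j$ keeps at least $(p_j^*-\varepsilon')a$ colors, and every later vertex retains at least $a'':=\lceil((1-p_j^*)^{n_j}-\varepsilon')a\rceil\ge N'$ colors.
\item Truncate the later lists to $a''$ and apply $P(j+1)$. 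Using $p_{j+1}^*(1-p_j^*)^{n_j}=p_j^*$, each later vertex gets at least $(p_j^*-2\varepsilon')a$ colors.
\end{itemize}
Then $P(1)$ gives $\chi^*_{_{DP}}(G)\le 1/p_1^*$.

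\textbf{Two smaller points.}
\begin{itemize}
\item Chernoff is not justified by ``bounded dependence.'' The right reason is that each $E_H(L(u),L(v))$ is a matching, so distinct colors of one list $L(v)$ have disjoint sets of potential neighbors in the lists of $V_j$. Hence their survival indicators under $V_j$'s fresh coins are independent.
\item Your all-at-once ``Construction'' with ``a suitable probability'' is valid only when read as this induction unrolled: $V_j$ keeps each still-available color with probability $p_j^*$, and you condition on the earlier stages, so each neighbor of a later color is chosen with conditional probability at most $p_j^*$. The ``Heuristic'' paragraph is not an argument by itself.
\end{itemize}
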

Theorem~\ref{thm:completeMultipartite} gives the best known upper bound of which we are aware on $\chi^*_{_{DP}}(K_{n,m})$ for all $n\geq2$ and sufficiently large $m$. In fact, when $n=2$ it provides the best known upper bound for each $m \geq 3$: $\chi^*_{_{DP}}(K_{2,m})\leq2/(3-\sqrt{5})\approx2.619$.  In this paper we improve this bound for $K_{2,3}$, $K_{2,4}$, and $K_{2,5}$.

\subsection{Strict Degeneracy}

For each $k \in \N$ the \emph{$k$-core} of a graph $G$ is the subgraph of $G$ obtained by successively deleting vertices of degree less than $k$; equivalently, the $k$-core of $G$ is the maximal subgraph $H$ of $G$ satisfying $d_H(v) \geq k$ for each $v \in V(H)$.  For example, the 2-core of a forest is the empty graph, the 2-core of a unicyclic graph is the cycle it contains, and the 3-core of a cycle is the empty graph.   

Recall that a graph $G$ is said to be $d$-degenerate if there exists some ordering of the vertices in $V(G)$ such that each vertex has at most $d$ neighbors among the preceding vertices.  The \emph{degeneracy of $G$} is the smallest $d$ such that $G$ is $d$-degenerate.  Notice that if graph $G$ is $d$-degenerate, then its $(d+1)$-core is the empty graph.  The coloring number of a graph $G$, denoted $\col(G)$, is equal to its degeneracy plus $1$.  It is well known that the coloring number of a graph is an upper bound on its DP-chromatic number.    

Relaxations of degeneracy have recently taken the literature by storm!  To review this, we define an operation that was presented in~\cite{BZ24} and studied by many others~\cite{BL23, BLS25, HHZ23, Y24, ZZZ23}.

\begin{define}[\cite{BZ24}]
    Suppose $G$ is a graph and let $f:V(G)\to\N$ be a function.  For a vertex $u\in V(G)$ and $W\subseteq N_G(u)$, the operation $\DSa(G,f,u,W)$ outputs the (possibly empty) graph $G_o$ and function $f_o:V(G_o)\to\{0,1,2,\ldots\}$ where
    \begin{equation*}
        G_o=G-u
        \hspace{0.5in}\text{and}\hspace{0.5in}
        f_o(v)=\left\{\begin{array}{l l}f(v)-1&\text{if $v\in N_{G}(u)- W$}\\
        f(v)&\text{otherwise.}\end{array}\right.
    \end{equation*}
\end{define}

The operation $\DSa(G,f,u,W)$ is \emph{legal} if $f(u)>\sum_{w\in W}f(w)$ and the resulting $f_o$ is positive for all $v\in V(G_o)$.  Intuitively, a graph $G$ is \emph{$ST^{(3)}$-$f$-degenerate} if there exists a sequence of legal $\DSa$ operations that takes $G$ to the empty graph.  Likewise, a graph $G$ is \emph{$ST^{(4)}$-$f$-degenerate} if there exists a sequence of legal $\DSa$ operations that takes $G$ to the empty graph while the size of the fourth input is at most $1$ for each operation.  For rigorous definitions see Section~\ref{sec:defAndFound}.  For $k\in\N$ and $j\in\{3,4\}$, we say that $G$ is \emph{$ST^{(j)}$-$k$-degenerate} if it is $ST^{(j)}$-$f$-degenerate when $f$ is identically $k$.  For $j\in\{3,4\}$, the \emph{strict type-$j$ degeneracy} of $G$ is denoted $ST^{(j)}(G)$ where
\begin{equation*}
	ST^{(j)}(G)=\min\left\{k:G\text{ is $ST^{(j)}$-$k$-degenerate}\right\}.
\end{equation*}
In $2023$, Han, He, and Zhu~\cite{HHZ23} introduced the notion of strict type-$3$ degeneracy (under a different name).  In $2021$, Bernshteyn and Lee~\cite{BL23} introduced the notion of weak degeneracy, denoted $wd(G)$ for graph $G$, which satisfies $ST^{(4)}(G)=wd(G)+1$ and $wd(G)+1\leq\col(G)$.   In $2024$, Zhou, Zhu, and Zhu~\cite{ZZZ23} showed that the strict type-$3$ degeneracy of a graph bounds the DP-chromatic number of the graph from above.  In the same paper they defined the strict type-$1$ and strict type-$2$ degeneracy of a graph. It is shown in~\cite{ZZZ23} that the strict type-$1$ degeneracy of a graph is an upper bound for the list chromatic number of the graph; however, it is unknown whether it is an upper bound on the DP-chromatic number of the graph.  It is also shown that the strict type-$2$ degeneracy of a graph is an upper bound for the DP-chromatic number of the graph.  In summary,

\begin{equation*}
	\chi(G)\leq\chi_{\ell}(G)\leq\chi_{_{DP}}(G)\leq ST^{(2)}(G) \leq ST^{(3)}(G)\leq ST^{(4)}(G)= wd(G)+1\leq \col(G).
\end{equation*}
It is also worth mentioning that in~\cite{ZZZ23} it was shown that $AT(G) \leq ST^{(1)}(G)$ where $AT(G)$ is the Alon-Tarsi number of graph $G$.  The Alon-Tarsi number of a graph is a well-studied upper bound on the list chromatic number of the graph, though it is not an upper bound on the DP-chromatic number of the graph~\cite{KM20}.  See~\cite{KM19} for the formal definition of the Alon-Tarsi number. 

The motivation for this paper is to extend the strict degeneracy notions in~\cite{ZZZ23} to the fractional context.  We have so far been unable to extend the notions of strict type-1 and strict type-2 degeneracy to the fractional context in any useful way.  In this paper we extend strict type-$3$ and strict type-$4$ degeneracy to the fractional context.

We now informally introduce our extensions.  The formal definitions are presented in Section~\ref{sec:defAndFound}.  For a graph $G$ and $j \in \{3,4\}$: the strict type-$j$ degeneracy of $G$ $ST^{(j)}(G)$ is extended to the fractional strict type-$j$ degeneracy of $G$ $ST^{(j)*}(G)$.  From the definitions it will be easy to see: $ST^{(3)*}(G)\leq ST^{(3)}(G)$, $ST^{(4)*}(G)\leq ST^{(4)}(G)$, and $ST^{(3)*}(G)\leq ST^{(4)*}(G).$

\subsection{Outline of results}\label{subsection:Outline}

In Section~\ref{sec:defAndFound} we provide rigorous definitions for fractional strict type-$3$ and fractional strict type-$4$ degeneracy.  We also prove a number of foundational results, some of which will be used throughout the remainder of the paper.  One particularly important result states the fractional strict type-$3$ degeneracy of a graph is an upper bound on the fractional DP chromatic number of the graph.

\begin{thm} \label{thm: upbound}
For any graph $G$, $\chi^*_{_{DP}}(G)\leq ST^{(3)*}(G)\leq ST^{(4)*}(G)$.
\end{thm}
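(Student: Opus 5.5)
The second inequality, $ST^{(3)*}(G)\leq ST^{(4)*}(G)$, should follow directly from the definitions in Section~\ref{sec:defAndFound}. Every legal sequence witnessing $ST^{(4)}$-type fractional degeneracy is a legal sequence of $\DSa$ operations in which each saved set has size at most $1$. Such a sequence is in particular a witness for $ST^{(3)}$-type fractional degeneracy with the same parameters. So the set over which $ST^{(3)*}$ takes an infimum contains the set for $ST^{(4)*}$. The real content is the first inequality.

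For the first inequality I will assume the fractional definition has the following form; the argument adapts to the paper's exact legality conventions. $G$ is $ST^{(3)}$-$(a,b)$-degenerate if, starting from $f\equiv a$, some sequence of fractional $\DSa$ operations deletes all vertices. Deleting $u$ while saving $W\subseteq N_G(u)$ is legal when $f(u)\geq b+\sum_{w\in W}f(w)$. The operation lowers $f$ by $b$ on $N_G(u)\setminus W$, and the new values must remain at least $b$ (or positive). Then $ST^{(3)*}(G)=\inf\{a/b\}$ over such pairs. It therefore suffices to show that $ST^{(3)}$-$(a,b)$-degeneracy implies $(a,b)$-DP-colorability; taking infima then gives $\chi^*_{_{DP}}(G)\leq ST^{(3)*}(G)$.

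I would prove a stronger statement by induction on $|V(G)|$, for arbitrary $f$. If $G$ can be emptied by legal fractional $\DSa$ operations from $f$, and $\mathcal{H}=(L,H)$ is a cover with $|L(v)|\geq f(v)$ for all $v$, then $H$ has an independent set $S$ with $|S\cap L(v)|\geq b$ for every $v$.

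First I trim each $L(v)$ to exactly $f(v)$ vertices; this is still a cover. Let the first operation delete $u$ while saving $W$. Colors in $L(u)$ adjacent to some vertex of $\bigcup_{w\in W}L(w)$ number at most $\sum_{w\in W}f(w)$, because each $E_H(L(u),L(w))$ is a matching. Legality then leaves at least $b$ colors of $L(u)$ with no neighbor in any saved list. Choose $b$ of them as $S_u$ and delete $L(u)$ together with all $H$-neighbors of $S_u$.

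For $v\in N_G(u)\setminus W$, the list $L(v)$ loses at most $b$ vertices, again by the matching property. For $v\in W$ it loses none, and non-neighbors of $u$ are untouched. So the resulting cover of $G-u$ satisfies $|L'(v)|\geq f_o(v)$. By the inductive hypothesis applied to $(G-u,f_o)$, there is an independent $S'$ in the reduced cover. Then $S=S'\cup S_u$ is independent in $H$, since we removed every neighbor of $S_u$ before recursing.

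The step I expect to need the most care is aligning this counting with the paper's precise fractional legality condition. This covers whether the threshold is $f(u)\geq b+\sum_{w\in W}f(w)$ or a strict-inequality variant, and how positivity of $f_o$ is phrased, so that exactly $b$ safe colors are guaranteed at $u$. Trimming lists to size exactly $f(v)$ before each step is what makes the sum-of-saved-list-sizes bound valid, so I would keep that reduction explicit. The base case is the empty graph, where the empty set works.
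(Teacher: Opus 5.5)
Your core mechanism is right and is the one the paper uses in Lemma~\ref{lem:STDP}. After trimming lists, the matching property and the size inequality give a color at the shaded vertex with no $H$-neighbor in any saved list. You then delete that color's neighbors from the lists of the unsaved neighbors and recurse. Your handling of the second inequality is also fine.

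\textbf{The gap.} You carry this out for a guessed ``batch'' notion that is not the paper's $ST^{(3)*}$. In the paper, one operation $\ShSa(G,S,T,u,W)$ commits only \emph{one} color at $u$:
\begin{itemize}
\item it lowers $T(u)$ by one, removing $u$ only when $T(u)=1$;
\item it lowers $S$ by one on $N_G(u)\setminus W$;
\item it is legal iff $S(u)+T(u)>\sum_{w\in W}(S(w)+T(w))$ and the new shield is nonnegative.
\end{itemize}
Complete sequences may interleave shades of different vertices, change $W$ between successive shades of the same $u$, and save on vertices that are already partially shaded. Your rule is the special case of shading $u$ exactly $b$ times in a row with a fixed $W$. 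So the paper's parameter is at most yours, and it can be strictly smaller. Under your rule one checks that $C_4$ can be emptied only when $a\ge 3b$, while $ST^{(3)*}(C_4)\le ST^{(4)*}(C_4)=2$ by Theorem~\ref{thm:c2r}.

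Consequently, bounding $\chi^*_{_{DP}}(G)$ by your batch parameter does not prove the theorem. The argument also does not transfer verbatim, for three reasons:
\begin{itemize}
\item induction on $|V(G)|$ breaks, because a step need not delete any vertex;
\item discarding all of $L(u)$ is wrong when $T(u)>1$, since $u$ still needs more colors and may later be saved on with its current list;
\item an inductive hypothesis with one uniform target $b$ is too weak once targets become unequal.
\end{itemize}

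\textbf{The repair.} The real care lies here, not in strict versus non-strict thresholds. Prove that $ST^{(3)}$-$(S,T)$-degeneracy implies $(S+T,T)$-DP-colorability for arbitrary shields and targets, by induction on $\sum_v T(v)$ (the length of a complete sequence).
\begin{itemize}
\item Take a cover with $|L(v)|=S(v)+T(v)$ and first step $\ShSa(u,W)$. Legality gives $|L(u)|>\bigl|\bigcup_{w\in W}L(w)\bigr|$, so some $c\in L(u)$ has no neighbor in the saved lists.
\item Remove $c$ from $L(u)$. For each $v\in N_G(u)\setminus W$, remove the matched neighbor of $c$ in $L(v)$, or an arbitrary element if there is none.
\item Restricted to $V(G_2)$, this is a cover of $G_2$ with list sizes exactly $S_2+T_2$. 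The condition $S_2\ge 0$ is exactly what keeps these sizes at least $T_2$.
\item The induction hypothesis gives an independent $I_2$ with $|I_2\cap L(v)|=T_2(v)$, and $I_2\cup\{c\}$ is independent with the required counts.
\end{itemize}
Taking $S\equiv s$, $T\equiv t$ and passing to infima then yields $\chi^*_{_{DP}}(G)\le ST^{(3)*}(G)$.
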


In Section~\ref{sec:upperBounds} we prove theorems that give upper bounds on the fractional strict type-$4$ degeneracy of various classes of graphs.  We give special emphasis to classes of graphs where the fractional strict type-$4$ degeneracy of a graph gives us the best known upper bound on the fractional DP chromatic number of the graph.  We begin by showing the fractional strict type-$4$ degeneracy of a cycle equals its fractional DP chromatic number.

\begin{thm}\label{thm:c2r}
	For all $r\in\N$ satisfying $r\geq2$, $ST^{(4)*}(C_{2r})=2$.  Consequently $ST^{(4)*}(C_{2r})=ST^{(3)*}(C_{2r})=\chi_{_{DP}}^*(C_{2r})=\chi_{\ell}^*(C_{2r})=\chi^*(C_{2r})=2$.
\end{thm}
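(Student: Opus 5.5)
The plan is to prove the two inequalities $ST^{(4)*}(C_{2r})\geq 2$ and $ST^{(4)*}(C_{2r})\leq 2$ separately. The chain of equalities then follows from results already stated.

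\emph{Lower bound.} By Theorem~\ref{thm: upbound}, $\chi^*(C_{2r})\leq\chi^*_{_{DP}}(C_{2r})\leq ST^{(3)*}(C_{2r})\leq ST^{(4)*}(C_{2r})$. Since $C_{2r}$ has an edge, $\chi^*(C_{2r})\geq 2$, so every quantity in the chain is at least $2$. In the other direction $\chi^*(C_{2r})\le 2$ and $\chi_\ell^*(C_{2r})=\chi^*(C_{2r})$. Hence once $ST^{(4)*}(C_{2r})\le 2$ is shown, all of $ST^{(4)*}$, $ST^{(3)*}$, $\chi^*_{_{DP}}$, $\chi^*_\ell$, $\chi^*$ of $C_{2r}$ equal $2$. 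This is also consistent with Theorem~\ref{thm: fracDP2}, which gives $\chi^*_{_{DP}}(C_{2r})=2$ for the single even cycle.

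\emph{Upper bound.} Theorem~\ref{thm: fracDP2} says the value $2$ itself is not attained for $\chi^*_{_{DP}}$. So, via Theorem~\ref{thm: upbound}, $C_{2r}$ cannot be fractionally strict type-$4$ degenerate at ratio exactly $2$, and the upper bound must come from an infimum. I will exhibit, for every sufficiently large $k$, a pair $(a,b)$ with $a/b\to 2$, concretely $(a,b)=(2k+1,k)$ or $(2k+2,k)$. For each such pair I will show $C_{2r}$ is $ST^{(4)}$-$(a,b)$-degenerate in the sense of Section~\ref{sec:defAndFound}. Letting $k\to\infty$ then gives $ST^{(4)*}(C_{2r})\le 2$.

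The scheme, with the vertices in cyclic order $v_1,\dots,v_{2r}$, is as follows.
\begin{enumerate}[(i)]
\item Delete $v_1$ with no saved neighbour. This lowers the budgets of $v_2$ and $v_{2r}$ by the cost of one deletion, tied to $b$.
\item Walk along the path $v_2,v_3,\dots,v_{2r-1}$. At each step delete $v_i$ while saving the single neighbour $v_{i+1}$, so $|W|\le 1$ as type $4$ requires. Legality needs the current budget of $v_i$ to exceed that of $v_{i+1}$. I would arrange the budgets along the path to alternate between two levels, roughly $a-b$ and $a$, with the parity of the even cycle making the alternation consistent. Alternatively, I would delete in an interleaved order (even-indexed vertices first, then odd) so that each deleted vertex carries a strictly larger budget than the neighbour it saves.
\item Finally delete $v_{2r}$, which is isolated by then; this only requires its budget to still be positive.
\end{enumerate}
The slack of $1$ or $2$ colours in $a=2k+1$ or $a=2k+2$ is exactly what pays for the strict inequality in the legality condition, and its relative size vanishes as $k\to\infty$.

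\emph{Main obstacle.} The hard step is closing the cycle. The vertex $v_{2r}$ is penalised twice, once when $v_1$ is deleted and once when $v_{2r-1}$ is deleted, unless the second deletion saves it. I must check that it keeps a positive budget and that every intermediate step is strictly legal; this is precisely where ratio $2$ fails and ratio $2+O(1/k)$ succeeds. My first attempt will be a direct induction on the position along the path, tracking the budget vector exactly. If that bookkeeping becomes awkward, I would instead use Theorem~\ref{thm:ks,kt-degenerate} to pass from a small base pair to its multiples, and then adjust by a single extra colour.
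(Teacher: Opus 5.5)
\textbf{Verdict: genuine gap.} Your lower bound and the deduction of the whole chain of equalities are fine. So is your observation, via Lemma~\ref{lem:STDP} and Theorem~\ref{thm: fracDP2}, that ratio exactly $2$ is never attained. There is one slip in the parameters. Being $ST^{(4)}$-$(2k+1,k)$-degenerate ``in the sense of Section~\ref{sec:defAndFound}'' means shield $2k+1$ and target $k$, i.e.\ ratio $(3k+1)/k\to 3$. You want shield $k+1$ and target $k$.

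\textbf{Your walk cannot use any saves.} Legality of $\ShSa(v_i,\{v_{i+1}\})$ requires $S(v_i)+T(v_i)>S(v_{i+1})+T(v_{i+1})$, and these budgets never increase.
\begin{itemize}
\item When $v_i$ is processed, $v_{i+1}$ is still untouched, so it has the maximal budget $s+t$. This already fails at $v_2$, whose budget is $s$ once $v_1$ is gone.
\item A non-saving shade of $v_i$ lowers both budgets by one, so the deficit persists throughout $v_i$'s removal.
\item Likewise $v_1$ can save nothing, because all budgets start equal and non-saving shades preserve ties.
\end{itemize}
Hence every step is a plain $\Sh$ and you are just using the greedy order. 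Then $v_{2r}$ loses $2t$, forcing $s\ge 2t$, which is ratio $3=\col(C_{2r})$.

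\textbf{The alternatives do not repair this.} Your ``interleaved'' variant is not specified. If it still removes each vertex completely before shading the next, it also cannot reach ratio $2$. On $C_4$ the best such order, $v_1,v_3,v_2,v_4$, needs $s\ge 3t/2$, i.e.\ ratio $5/2$. The fallback fails as well. Theorem~\ref{thm:ks,kt-degenerate} preserves $(s+t)/t$, and Lemma~\ref{lem:Monotonicity} only lets you raise $s$. So a fixed base pair, which necessarily has ratio $>2$, never yields ratios tending to $2$.

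\textbf{The missing idea is to shade vertices partially and in rounds,} so that unit budget gaps are regenerated every round. The paper proceeds as follows.
\begin{itemize}
\item Using Lemma~\ref{lem:Monotonicity}, start from shield $t$ at $v_1$ and $t+1$ elsewhere, with target $t$ everywhere.
\item In each round apply $\ShSa(v_2,\{v_1\}),\ShSa(v_4,\{v_3\}),\ldots,\ShSa(v_{2r},\{v_{2r-1}\})$, so each even vertex is shaded once.
\item The unit deficit at $v_1$ makes the first save legal, since $2t+1>2t$.
\item Each $\ShSa(v_{2i},\{v_{2i-1}\})$ penalizes only its other neighbour $v_{2i+1}$, creating exactly the gap the next save needs.
\item The last step penalizes $v_1$ and closes the cycle; this is where evenness is used.
\end{itemize}
After a round the configuration recurs with every odd shield and every even target lowered by one; this is the paper's $(S^{(j)},T^{(j)})\to(S^{(j-1)},T^{(j-1)})$. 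After $t$ rounds the even vertices are removed and the odd vertices are isolated with nonnegative shields. This gives $ST^{(4)}$-$(t+1,t)$-degeneracy for every $t$, and hence $ST^{(4)*}(C_{2r})\le 2$.
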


\begin{thm}\label{thm:c2r+1}
	For all $r\in\N$, $ST^{(4)*}(C_{2r+1})=2+1/r$.  Consequently $ST^{(4)*}(C_{2r+1})=ST^{(3)*}(C_{2r+1})=\chi_{_{DP}}^*(C_{2r+1})=\chi_{\ell}^*(C_{2r+1})=\chi^*(C_{2r+1})=2+1/r$.
\end{thm}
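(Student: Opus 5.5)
The equality follows from matching bounds, and only the upper bound needs new work. For the lower bound, Theorem~\ref{thm: upbound} gives
$$ST^{(4)*}(C_{2r+1})\geq ST^{(3)*}(C_{2r+1})\geq \chi^*_{_{DP}}(C_{2r+1}).$$
By Theorem~\ref{thm: oddcycle}, $\chi^*_{_{DP}}(C_{2r+1})=\chi^*_\ell(C_{2r+1})=\chi^*(C_{2r+1})=2+1/r$. Once we show $ST^{(4)*}(C_{2r+1})\leq 2+1/r$, the whole chain in the statement collapses to equalities. So the entire task is the upper bound.

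For the upper bound I will use the formal definition from Section~\ref{sec:defAndFound}, which I expect has the following shape. A graph is fractionally $ST^{(4)}$-degenerate with parameters $(a,b)$ if, starting from $f\equiv a$, some sequence of legal $\Sh$/$\ShSa$-type operations makes every vertex receive $b$ "shades". Each shade at $u$ lowers $f$ on the neighbours of $u$ by one, except for at most one saved neighbour $w$, and saving $w$ is allowed only when $f(u)>f(w)$. The target is to show that $C_{2r+1}$ is $ST^{(4)}$-$(2r+1,r)$-degenerate, which gives $ST^{(4)*}(C_{2r+1})\le (2r+1)/r$. If this fails for exactly $(2r+1,r)$, I will instead aim for $(t(2r+1),tr)$ with some fixed $t$, since the infimum only needs the ratio.

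Let the vertices of $C_{2r+1}$ in cyclic order be $v_1,\ldots,v_{2r+1}$. The accounting shows why saves are essential. Each vertex needs $r$ shades of its own. Without saves, its two neighbours would remove $2r$ from its budget of $2r+1$, leaving too little slack for the final shades to stay legal. Each vertex therefore needs to be saved roughly once per round, or at least often enough.

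I will mimic the structure of the Alon--Tuza--Voigt and DKM colorings of odd cycles. The schedule proceeds in $r$ rounds, shading $v_1,v_2,\ldots,v_{2r+1}$ in order within each round. Each shade of $v_i$ saves $v_{i+1}$, which should be legal because $v_i$'s budget has not yet been hit by $v_{i+1}$ in that round while $v_{i+1}$'s has been hit by $v_{i+2}$ earlier. I would track the invariant $f(v_i)-f(v_{i+1})\ge 1$ at the moment $v_i$ is shaded. The wrap-around edge $v_{2r+1}v_1$ needs a separate treatment, perhaps by starting at a rotated index in each round.

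The main obstacle is this bookkeeping step: verifying that every save is legal and that $f$ stays positive until each vertex has received all $r$ shades. The odd length of the cycle is exactly what makes the wrap-around edge the delicate case. If the simple cyclic schedule fails there, my first fallback is the scaled version $(t(2r+1),tr)$, choosing $t$ so that the deficits can be distributed evenly around the cycle. I would confirm the pattern on $C_3$ with $(3,1)$ and on $C_5$ with $(5,2)$ before writing the general invariant.
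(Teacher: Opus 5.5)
There is a genuine gap: the lower bound is fine, but the upper bound is never proved, and the concrete rule you propose provably fails. (For the lower bound, all you need is $\chi^*(C_{2r+1})=2+1/r$ together with the chain in Theorem~\ref{thm: upbound}.)

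\textbf{Your model of the operation is not the paper's.} In the paper, shading $u$ lowers $T(u)$ by one and lowers $S$ by one at each unsaved neighbour. Saving $w$ requires the strict inequality $S(u)+T(u)>S(w)+T(w)$, and every shield must stay nonnegative. So the quantity compared in a save also drops at the vertex being shaded. In the paper's notation your goal is $ST^{(4)}$-$(r+1,r)$-degeneracy; $ST^{(4)}$-$(2r+1,r)$ would give only $3+1/r$.

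\textbf{The round-robin rule never fires.} Run it from $S\equiv r+1$, $T\equiv r$ with $r\ge2$. For $i\le 2r-1$, when $v_i$ is shaded in round one, $v_{i+1}$ is still untouched. Its other neighbour $v_{i+2}$ comes later, so your claim that $v_{i+1}$ ``has been hit by $v_{i+2}$ earlier'' has the order reversed. Hence $S(v_{i+1})+T(v_{i+1})=2r+1\ge S(v_i)+T(v_i)$, and $\ShSa(v_i,\{v_{i+1}\})$ is illegal. If you replace each illegal save by a plain shade, then for $i=2r$ and $i=2r+1$ the two sides are again equal ($2r$ and $2r-1$). So round one consists of plain shades and ends in the rotation-invariant state $S\equiv T\equiv r-1$. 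The same thing then happens in every later round, rotated or not. The schedule collapses to plain round-robin shading, in which each vertex absorbs two unsaved hits per round. Some shield then becomes negative for every $r\ge2$; for $C_5$ this happens at the fourth shade of round two. The same argument applies verbatim to your scaled fallback $S\equiv t(r+1)$, $T\equiv tr$. What you call the bookkeeping step is in fact the whole proof, and the idea it needs is absent.

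\textbf{What is missing.} You need a controlled way to break this symmetry so that saves become legal, plus a schedule whose legality is actually checked. For $r\ge2$ the paper first applies $\Sh(v_{2r})$ $r$ times and then $\Sh(v_{2r+1})$ $r$ times, with no saves. This is legal because each of $v_{2r-1}$, $v_{2r+1}$, $v_1$ absorbs exactly $r$ hits against a shield of $r+1$. It deletes two adjacent vertices, leaving the path $v_1,\dots,v_{2r-1}$ with $S(v_1)=S(v_{2r-1})=1$, $S=r+1$ elsewhere, and $T\equiv r$. The endpoints now have $S+T=r+1<2r+1$, which is exactly what lets $v_2$ save $v_1$.

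Lemma~\ref{lem:path} exploits this asymmetry with a staircase.
\begin{itemize}
\item For $k=1,\dots,r-1$, shade $v_{2k}$ saving $v_{2k-1}$ exactly $r-k$ times. This leaves $T(v_{2k})=k$ and $S(v_{2k+1})=k+1$ for $k\le r-2$, and $S(v_{2r-1})=0$.
\item Then for $k=r-1,\dots,1$, shade $v_{2k}$ saving $v_{2k+1}$ exactly $k$ times. This removes $v_{2k}$ and drives $S(v_{2k-1})$ to $0$.
\end{itemize}
At the end only the independent set of odd-indexed vertices remains. This path lemma, built from blocks of repeated shades of one vertex rather than from rounds, is the real content of the proof, and nothing in your proposal plays its role.
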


Then, we generalize these theorems by showing $ST^{(4)*}(G)=ST^{(3)*}(G)=\chi_{_{DP}}^*(G)$ when $G$ is a unicyclic graph, see Corollary~\ref{cor:unicyclic} below. Theorem~\ref{thm:c2r+1} gives an alternative (and quite different) proof of Theorem~\ref{thm: oddcycle} compared to the proof given in~\cite{DKM25}.  Next, we prove the following theorem which provides an improvement on the bound for the fractional DP-chromatic number given by Theorem~\ref{thm:completeMultipartite} for $K_{2,3}$, $K_{2,4}$, and $K_{2,5}$.  In fact, as far as we are aware it gives the best known upper bound on the fractional DP-chromatic number of these three graphs.

\begin{thm}\label{thm:Km,nUpperBound}
	For all $m,n\in\N$ satisfying $2\leq m\leq n$, $ST^{(4)*}(K_{m,n})\leq m+1-m/n$.  Consequently $\chi_{_{DP}}^*(K_{m,n})\leq m+1-m/n=\col(K_{m,n})-m/n$.
\end{thm}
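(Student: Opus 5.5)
The plan is to exhibit an explicit sequence of legal operations witnessing fractional strict type-$4$ degeneracy with parameters $a=mn+n-m$ and $b=n$. This gives $ST^{(4)*}(K_{m,n})\le a/b=m+1-m/n$. The second sentence of the theorem then follows from Theorem~\ref{thm: upbound}, together with $\col(K_{m,n})=m+1$ for $m\le n$. I am assuming that the formal definition in Section~\ref{sec:defAndFound} works like this: every vertex begins with value $a$ and must be ``deleted'' $b$ times (one unit at a time). Each unit deletion of $u$ decreases the value of every neighbour of $u$ outside the saved set $W$ by $1$. For type $4$ we need $|W|\le1$ and $f(u)>f(w)$ when $W=\{w\}$, and all values must stay positive. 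If the formal definition is phrased differently (for example via a blow-up replacing $v$ by $b$ copies), the same schedule translates directly.

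Let $A=\{x_1,\dots,x_m\}$ and $B=\{y_1,\dots,y_n\}$ be the partite sets. In phase one, process all $mn$ unit deletions of the vertices of $A$, and in each one save a single vertex of $B$. Since $A$ is independent, the values of $A$-vertices never drop and stay equal to $a$. The $mn$ saves are distributed evenly, so each $y_j$ is saved exactly $m$ times. Concretely, in round $t\in[n]$ delete one unit of each $x_i$, and let $x_i$ save $y_{\sigma_t(i)}$, where the assignment $(t,i)\mapsto y$ is cyclic so that each $y_j$ is hit exactly $m$ times overall. Each $y_j$ is then decreased $mn-m$ times, so its final value is $a-(mn-m)=n$, which is positive throughout. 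In phase two, $B$ is independent, so each $y_j$ can be deleted one unit at a time, $n=b$ times, with no saves and no effect on other vertices. This finishes the sequence.

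The main obstacle is the strict inequality $f(x_i)>f(y)$ required when $x_i$ saves $y$. Each $f(x_i)=a$, so a save is legal only if the saved vertex has already lost at least one unit. I would fix this with the ordering. In the very first step, delete one unit of $x_1$ while saving nobody ($W=\emptyset$), which lowers every $y_j$ below $a$. After that every save is legal. This costs one save, so I would then re-balance the count. If there is no slack, I would instead let the first operation on $x_1$ save $y_1$ only after some other vertex has already hit $y_1$. To arrange this, interleave the order so that the first round's deletions of $x_1,\dots,x_m$ come after other $x$'s have reduced the targeted $y$. Since $m\ge2$, every $y$ receives at least one unsaved hit before its first save is needed, which is where the hypothesis $m\ge2$ is used. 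After fixing this order, I would verify that the totals are exactly $mn-m$ decrements per $y_j$, so that each final value equals $n$.
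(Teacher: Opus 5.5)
You are aiming at an \emph{exact} witness: with $a=s+t$ and $b=t$, your parameters say that $K_{m,n}$ is $ST^{(4)}$-$(m(n-1),n)$-degenerate, so the ratio $m+1-m/n$ would be attained. That is false in general, and your two-phase schedule cannot achieve it for any $m,n$. For $m=n=2$ it would make $C_4=K_{2,2}$ $ST^{(4)}$-$(2,2)$-degenerate. By Lemma~\ref{lem:STDP} that would make $C_4$ $(4,2)$-DP-colorable, contradicting Theorem~\ref{thm: fracDP2}. Inside your scheme the obstruction is a tight count. Phase one consists of exactly $mn$ shadings of $A$-vertices. Each one lowers the shield of every $B$-vertex except at most one saved vertex, so the $B$-shields absorb at least $mn(n-1)$ decrements against a total budget of $n\cdot m(n-1)$. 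Hence \emph{every} $A$-shading must save some $B$-vertex. Yet the very first operation cannot save, because at that moment all vertices have $S+T=a$ and legality requires strict inequality. Your suggested repairs are not available: there is no slack to re-balance, and no ordering can place the first deletion after other hits.

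You have also misread the legality condition. The compared quantity is $S+T$, and shading $u$ lowers $T(u)$, so $S(a_i)+T(a_i)$ drops by one each time $a_i$ is shaded; the $A$-values do not stay at $a$. The saved vertex must therefore already have absorbed about as many hits as $a_i$ has been shaded. This, not the first-save issue, is where $m\ge 2$ enters, through the inequality $m(n-1)\ge n$.

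The missing idea is that $ST^{(4)*}$ is an infimum, so you only need ratios tending to $m+1-m/n$. The paper shows $K_{m,n}$ is $ST^{(4)}$-$(m(n-1)t+1,nt)$-degenerate for every $t\in\N$. Start with shield $m(n-1)t+1$ on $A$, shield $m(n-1)t$ on $B$, and target $nt$ everywhere. Then run $t$ rounds; in each round $a_1$ is shaded $n$ times, saving $b_1,\dots,b_n$ in turn, then $a_2$, and so on. When $a_i$ is shaded in round $\rho$ while saving $b_k$, the left side of the legality inequality exceeds the right side by $1+(\rho-1)(m(n-1)-n)+(i-1)(n-1)\ge 1$. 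Each $b_j$ loses exactly $m(n-1)$ shield per round. So after $t$ rounds the $A$-vertices are removed, the $B$-shields are $0$, and the remaining graph is edgeless. Lemma~\ref{lem:Monotonicity} then makes the shield constant. The resulting ratio is $m+1-m/n+1/(nt)$, and letting $t\to\infty$ gives the bound. The $+1$ on $A$ is exactly the slack your construction lacks, and scaling by $t$ is what makes it negligible.
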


Theorems~\ref{thm:c2r}, \ref{thm:c2r+1}, and~\ref{thm:Km,nUpperBound} provide an upper bound on the fractional DP-chromatic numbers of highly structured graphs.  We end Section~\ref{sec:upperBounds} by addressing graphs that are formed by subdividing the edges of an arbitrarily chosen graph.

\begin{thm}\label{thm:subdivisions}
    Suppose $H$ is a connected graph and $G$ is a subdivision of $H$ where all of the edges are replaced by internally disjoint paths each of whose lengths is at least $2r$ for some $r\geq2$.  Then $\chi^*_{_{DP}}(G)\leq ST^{(4)*}(G)\leq2+1/r$.
\end{thm}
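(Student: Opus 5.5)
The plan is to construct an explicit sequence of legal $\DSa$ operations for $G$ with fractional parameters $(2r+1,r)$, possibly scaled to $(t(2r+1),tr)$ if the argument needs more room. This would give $ST^{(4)*}(G)\leq 2+1/r$. The first inequality $\chi^*_{_{DP}}(G)\leq ST^{(4)*}(G)$ then follows from Theorem~\ref{thm: upbound}.

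The structure to exploit is that $G$ consists of \emph{branch vertices}, the vertices of $H$, joined by internally disjoint paths, each with at least $2r-1$ internal vertices of degree $2$.

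\textbf{Step 1 (branch vertices).} Delete every branch vertex first, each with empty save set $W=\emptyset$. I am assuming the fractional operation charges each unsaved neighbour $r$ units of its budget $2r+1$. Two branch vertices are never adjacent, since every edge of $H$ is subdivided, so each deletion is legal regardless of degree. After all these deletions, $G$ has become a disjoint union of paths. Each path has at least $2r-1$ vertices; its two end vertices have residual budget $r+1$ and its internal vertices still have budget $2r+1$. A path whose two ends are the same branch vertex (a loop in $H$) is a cycle through that vertex, and it gives a path of the same kind. If $H$ is a single vertex or an edge, $G$ is a tree or a long path, and a direct greedy argument handles it.

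\textbf{Step 2 (one residual path, the key lemma).} I would prove that a path $P$ on $s\geq 2r-1$ vertices, with both end budgets $r+1$ and interior budgets $2r+1$, can be reduced to the empty graph by legal $\DSa$ operations with $|W|\leq 1$. This is exactly the situation in the proof of Theorem~\ref{thm:c2r+1} once the first vertex of $C_{2r+1}$ has been deleted, except that here there may be more interior vertices. So I would reuse that argument, processing $P$ from one end:
\begin{itemize}
\item at each step, delete the current end vertex and save its successor whenever the save condition holds;
\item otherwise, let the successor absorb a charge of $r$.
\end{itemize}
The budgets should evolve with period about $2r$. The deficit created at one end is ``paid off'' by the savings accumulated along the path, so that when the far end (budget $r+1$) is reached it is still positive.

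To handle arbitrary $s\geq 2r-1$ rather than exactly $2r-1$, I would either show that inserting extra degree-$2$ interior vertices never hurts, or periodically ``reset'' the pattern. A reset works as follows: an interior vertex with full budget $2r+1$ can be deleted saving its successor, which returns the process to the starting configuration.

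\textbf{Main obstacle.} The bookkeeping in Step 2 is the hard part: one must check that the budget sequence stays positive and that each save is legal, including the boundary case $s=2r-1$, which is tight exactly as for odd cycles. If the integer budgets $(2r+1,r)$ turn out to be too coarse for some residue of $s$ modulo $2r$, I would pass to $(t(2r+1),tr)$ for suitable $t$. In that case the charges $tr$ can be split more flexibly, and since $ST^{(4)*}$ is defined as an infimum of ratios, this suffices.
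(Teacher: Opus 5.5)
Your Step 1 matches the paper's reduction (Lemma~\ref{lem:RedToLinForest} with $X=V(H)$): shade each branch vertex $r$ times with $W=\emptyset$. This leaves paths on at least $2r-1$ vertices with end shields $1$, interior shields $r+1$, and all targets $r$. Your key lemma is then exactly Corollary~\ref{cor:paths}. The gap is that Step 2 carries the whole difficulty, and the procedure you sketch for it provably fails.

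\textbf{Why the end-first procedure fails.} Processing from one end, the current end vertex always has $S+T\le r+1$. Its successor has $S+T=2r+1$, or $r+1$ once it is the far end. So the strict condition $S(u)+T(u)>S(w)+T(w)$ for saving the successor never holds. Every vertex therefore absorbs a charge of $r$, and the last one finishes with shield $1-r<0$.

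\begin{itemize}
\item Passing to $(t(r+1),tr)$ does not help, since all these comparisons scale.
\item The ``reset'' fails for the same reason: a vertex with $S+T=2r+1$ cannot save a neighbour with $S+T=2r+1$.
\item Your working model of the operation is too coarse for the lemma even to be true. In that model a vertex is deleted in one go, charging each unsaved neighbour $r$ units. Take $r=2$ and $P_3$ with shields $1,3,1$ and targets $2$. Deleting $v_2$ first drives an end's shield to $-1$. Deleting an end first cannot save $v_2$ (as $3<5$), so it leaves $v_2$ and the other end both with $S+T=3$, and neither can save the other.
\end{itemize}

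\textbf{The missing idea.} The $r$ individual \ShSa{} operations on one vertex may save on \emph{different} neighbours and may be interleaved with operations on other vertices. Saves should go from a high-budget interior vertex onto a low-shield neighbour, not inward from the end. For $P_{2r-1}$ with vertices $v_1,\dots,v_{2r-1}$, this is Lemma~\ref{lem:path}:
\begin{itemize}
\item For $k=1,\dots,r-1$, shade $v_{2k}$ exactly $r-k$ times, saving $v_{2k-1}$. This is legal, since at the $j$-th such step $2r+2-j>r+k$. It leaves $T(v_{2k})=k$, $S(v_{2k+1})=k+1$ for $k\le r-2$, and $S(v_{2r-1})=0$.
\item Then for $k=r-1,\dots,1$, shade $v_{2k}$ its remaining $k$ times, saving $v_{2k+1}$, whose shield is now $0$. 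This lowers $S(v_{2k-1})$ from $k$ to $0$ and removes $v_{2k}$.
\end{itemize}
The result is an edgeless graph. For instance, with $r=2$ and $P_3$: shade $v_2$ once saving $v_1$, then once saving $v_3$.

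Longer paths reduce to this case by precisely your greedy move. Shading an end vertex $r$ times with $W=\emptyset$ turns $P_k$ into $P_{k-1}$ with the same boundary data, as in Corollary~\ref{cor:paths}. So your end-first deletion is fine down to $2r-1$ vertices, but the tight case needs the interleaved staircase above.
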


Theorem~\ref{thm: fracDP2} says that for a connected graph $G$, $\chi_{_{DP}}^*(G)=2$ if and only if $G$ contains a single even cycle and no odd cycles.  Consequently, any connected graph containing more than one cycle has fractional DP-chromatic number strictly greater than $2$. Theorem~\ref{thm:subdivisions} gives us a way to construct connected graphs with many cycles whose fractional DP-chromatic numbers are close to $2$.

Corollary~\ref{cor:unicyclic} tells us connected graphs $G$ with average degree 2 satisfy $ST^{(4)*}(G)=\chi_{_{DP}}^*(G)$.  Connected graphs $T$ of average degree less than 2 satisfy this equality since $\chi^{*}(T) = \col(T)$.  The results of Section~\ref{sec:upperBounds} seem to indicate that strict type-4 fractional degeneracy is useful for bounding the fractional DP-chromatic numbers of graphs with average degree close to $2$. 

 In general it would be interesting to determine the graphs whose strict type-4 fractional degeneracy equals its fractional DP-chromatic number.

\begin{ques}\label{ques:slack}
    What graphs $G$ satisfy $ST^{(4)*}(G)=\chi^*_{_{DP}}(G)$?
\end{ques}
As previously stated, unicyclic graphs and trees are two classes of graphs that satisfy the equation in Question~\ref{ques:slack}.  It is also worth noting that chordal graphs satisfy this equation since it is well-known that $\chi^*(G) = \col(G)$ when $G$ is chordal.  In light of Question~\ref{ques:slack} we turn our attention to lower bounds on the fractional strict type-$4$ degeneracy of complete bipartite graphs in Section~\ref{sec:lowerBound}.  Specifically, we prove the following result.

\begin{thm}\label{thm:CompleteBipartiteLowerBound}
	For any $\epsilon\in(0,1]$, $ST^{(4)*}(K_{m,n})\geq m+1-\epsilon$ whenever $m\in\N$ and $n\geq 5m/\epsilon$.
\end{thm}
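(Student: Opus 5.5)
The plan is to argue by contradiction from the formal definition of Section~\ref{sec:defAndFound}. I assume that definition works as follows. We start with $f\equiv a$. Each $\DSa$ step deletes a vertex $u$, saves a set $W$ with $|W|\le 1$, and lowers every unsaved remaining neighbour by $b$. The step is legal if $f(u)\ge b+\sum_{w\in W}f(w)$ and every surviving value stays at least $b$. Then $ST^{(4)*}(G)$ is the infimum of $a/b$ over the pairs for which some legal sequence empties $G$; if the actual normalisation differs, the bookkeeping below has to be adjusted accordingly.

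Fix such a sequence for $K_{m,n}$ with parts $A$ ($|A|=m$) and $B$ ($|B|=n$), and suppose $a/b<m+1-\epsilon$. The goal is to show that only $O(m/\epsilon)$ vertices of $B$ can be handled legally, which contradicts $n\ge 5m/\epsilon$. The basic observation is about a single vertex $x$. If $x$ receives $k$ unsaved decrements before its own deletion, then $a-kb\ge b$, so $k\le a/b-1<m-\epsilon$. Hence a vertex of $B$ can survive at most $m-1$ deletions of $A$-vertices in which it is not saved.

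This splits $B$ into three classes.
\begin{enumerate}[(i)]
\item Vertices deleted before the last vertex of $A$ is deleted, at a moment when at least two $A$-vertices remain.
\item Vertices saved at least once by the deletion of some $A$-vertex. Each $A$-deletion saves at most one vertex, so this class has at most $m$ members.
\item Vertices deleted while exactly one $A$-vertex $w$ remains, having never been saved.
\end{enumerate}

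\textbf{Class (iii) is empty.} A vertex $u$ in class (iii) has absorbed $m-1$ unsaved decrements, so $f(u)\le a-(m-1)b<(2-\epsilon)b$. Two cases arise.
\begin{itemize}
\item If $u$ saves $w$, legality needs $f(w)\le f(u)-b<(1-\epsilon)b$. This is impossible, since values never drop below $b$.
\item If $u$ does not save $w$, then $w$ drops by $b$.
\end{itemize}
Thus either the step is illegal or $w$ loses $b$. Iterating the second case, $w$ can absorb only $m-1$ decrements in total, and $f(w)\le a$, so class (iii) together with the class (i) vertices that hit $w$ is bounded by the budget of $w$.

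\textbf{Bounding class (i) (the main step).} Deleting a $B$-vertex while $k\ge 2$ vertices of $A$ remain drains at least $(k-1)b\ge b$ from the remaining $A$-values, because at most one $A$-vertex is saved. Saving an $A$-vertex $w$ is legal only if $f(u)\ge f(w)+b$. Each $A$-vertex can absorb at most $a/b-1<m-\epsilon$ decrements of size $b$, so the total drain on $A$ is less than $m(m-\epsilon)b$. This already bounds class (i) by $O(m^2)$, which is too weak. To reach $O(m/\epsilon)$ I would refine the count so that the slack $\epsilon$ enters.
\begin{itemize}
\item A $B$-vertex $u$ that has already been decremented $j$ times has $f(u)\le a-jb$. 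So it can save only an $A$-vertex whose value is at most $a-(j+1)b$, that is, one that has itself already lost $(j+1)$ units.
\item Comparing these losses through a potential such as $\sum_{x\in A}(f(x)-b)/b$ plus the number of pending decrements on $B$, I expect each class-(i) deletion to cost a fixed fraction of $\epsilon$ on average, unless it is one of at most $O(m)$ exceptional steps.
\end{itemize}
This would give $|{\rm(i)}|\le O(m/\epsilon)$, say at most $4m/\epsilon$.

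Adding the classes gives $n\le 4m/\epsilon+m<5m/\epsilon$ (using $\epsilon\le1$), a contradiction. This weighted accounting in class (i) is where I expect the real difficulty, and the constant $5$ should come out of it.
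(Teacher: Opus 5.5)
\textbf{There is a genuine gap: you analyse a different, more restrictive model, and the hard part of the argument is left unproved.}

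\textbf{The model.} The definition you assume is not the paper's, and the difference is more than normalisation. In Section~\ref{sec:defAndFound} a vertex is never deleted in a single step. Each \ShSa{} operation shades $u$ once, lowering $T(u)$ by $1$ and the shield of every unsaved neighbour by $1$. The $t$ shades of a vertex may be interleaved arbitrarily with other operations and may save on different neighbours. Writing $f=S+T$, $a=s+t$, $b=t$, your one-shot deletion is exactly the special case in which the $t$ shades of each vertex are consecutive with a common save set. So showing that no such special sequence empties $K_{m,n}$ does not exclude a complete, restricted $(K_{m,n},s,t)$-legal sequence.

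In fact your model only sees $\lfloor a/b\rfloor$. A save is legal iff the saved vertex has absorbed strictly more decrements than $u$, and a vertex survives iff it has absorbed at most $\lfloor a/b\rfloor-1$. So your model collapses to integer strict type-4 degeneracy: it would assign $C_5$ the value $3$ instead of the $5/2$ of Theorem~\ref{thm:c2r+1}, and $\epsilon$ plays no role in it.

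This breaks your class (iii) argument. The claim ``values never drop below $b$'' is false in the real model, because $S(a_1)+T(a_1)$ falls below $t$ once $a_1$ has been partly shaded. Hence a $B$-vertex with almost no shield left can still legally save on $a_1$ during its early shades, while its own $T$ is large.

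\textbf{Where the difficulty actually lies.} You have also placed the difficulty in the wrong class, and the step you defer is never proved. Class (i) is easy once you look only at the last two $A$-vertices $a_1,a_2$ to be removed. Every shade of a $B$-vertex made while both are present fails to save at least one of them, and $S(a_1)+S(a_2)=2s<2mt$. So fewer than $2mt$ such shades occur, and at least $nt-2s\ge 3mt/\epsilon$ units of $B$-target remain when $a_1$ is the only $A$-vertex left. This is the paper's Claim 1; in your model the same argument bounds class (i) by $2m$ with no potential function needed.

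The real work is in that final phase, and three ideas are missing.
\begin{itemize}
\item Fewer than $m/\epsilon$ vertices of $B$ outlast $a_1$. Each of the $mt$ shades of $A$-vertices saves at most one of them, which forces $\beta s\ge(\beta-1)mt$; combined with $s<(m-\epsilon)t$ this gives $\beta<m/\epsilon$.
\item At most $m/\epsilon$ vertices $b$ ever satisfy $S_i(b)\ge T_i(a_1)$ during the phase. Each such $b$ must have been saved more than $\epsilon t$ times, and only $mt$ saves are available. This, not ``saved at least once'', is the correct analogue of your class (ii), and it is where $\epsilon$ enters.
\item The remaining set $\Delta$ carries more than $mt/\epsilon$ units of target, all shaded before $a_1$ disappears.
  \begin{itemize}
  \item If some $b\in\Delta$ saves on $a_1$ in this phase, look at its last such shade. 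Legality together with $S(b)<T(a_1)$ gives $T(b)>S(a_1)+1$. The invariant $T(b)>S(a_1)$ then persists up to $b$'s final shade, and that unsaved shade makes $S(a_1)$ negative.
  \item If no $b\in\Delta$ does, more than $mt/\epsilon\ge mt>s$ unsaved shades hit $a_1$.
  \end{itemize}
\end{itemize}

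Your three-phase split is the right skeleton, but without these three steps the proof does not go through.
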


Theorem~\ref{thm:CompleteBipartiteLowerBound} shows the limitation in using strict type-$4$ degeneracy to bound the fractional DP-chromatic number of complete bipartite graphs. For example, as a consequence of Corollary $7$ from~\cite{DKM25} we know that $\chi_{_{DP}}^*(K_{m,n})\leq0.5m+1.75$ for all $m,n\in\N$ whereas Theorem~\ref{thm:CompleteBipartiteLowerBound} shows that $ST^{(4)*}(K_{m,n})$ can be arbitrarily close to $m+1$ provided $n$ is sufficiently large.  This shows that for any $r\in\R^+$ there exists a graph $G$ such that $ST^{(4)*}(G)-\chi^*_{_{DP}}(G)>r$.  In~\cite{BZ24} it is shown that for any $N \in \N$ there is a graph $G$ such that $ST^{(4)}(G) - ST^{(3)}(G) > N$.  Theorem~\ref{thm:CompleteBipartiteLowerBound} can be viewed as a potential first step towards extending this result to the fractional context.
\begin{ques}
   For each $r \in \R^+$ is there a graph $G$ such that $ST^{(4)*}(G)-ST^{(3)*}(G) > r$?
\end{ques}
Provocatively, we are not even aware of a graph $G$ satisfying $ST^{(4)*}(G)>ST^{(3)*}(G)$.

\section{Definitions and foundational results}\label{sec:defAndFound}

\subsection{Fractional strict type-$3$ and fractional strict type-$4$ Degeneracy}\label{subsec:wfdIntro}

In this section we give the formal definitions of fractional strict type-$3$ and fractional strict type-$4$ degeneracy.  First, we extend the $\DSa$ operation to the fractional context.

\begin{define}
	$\ShSa$ is an operation that takes as input: a graph $G$, two functions $S:V(G)\to\{0,1,2,\ldots\}$ and $T:V(G)\to\N$, a vertex $u \in V(G)$, and some $W \subseteq N_G(u)$.  In particular, an application of the $\ShSa$ operation with these inputs, written $\ShSa(G,S,T,u,W)$, outputs a graph $G_o$ ($G_o$ may be the empty graph) and an ordered pair of functions $(S_o,T_o)$ with $S_o:V(G_o)\to\{-1,0,1,2,\ldots\}$ and $T_o:V(G_o)\to\N$ according to the following.
	\begin{itemize}
		\item If $T(u)=1$, then
		\begin{align*}
			G_o&=G-u,\\
			S_o(v)&=\left\{\begin{array}{l l}S(v)-1&\text{if }uv\in E(G)\text{ and }v\not\in W\\S(v)&\text{otherwise},\end{array}\right.\\
			T_o(v)&=T(v).
		\end{align*}
		\item If $T(u) > 1$, then
		\begin{align*}
			G_o&=G,\\
			S_o(v)&=\left\{\begin{array}{l l}S(v)-1&\text{if }uv\in E(G)\text{ and }v\not\in W\\S(v)&\text{otherwise},\end{array}\right.\\
			T_o(v)&=\left\{\begin{array}{l l}T(v)-1&\text{if }v=u\\T(v)&\text{otherwise}.\end{array}\right.
		\end{align*}
	\end{itemize}
    In the special case when $W=\emptyset$, we will use the notation $\Sh(G,S,T,u)$ for \\ $\ShSa(G,S,T,u,\emptyset)$.

\end{define}

When we write $s=\ShSa(G,S,T,u,W)$, we adopt the convention that $s$ is not the output of $\ShSa(G,S,T,u,W)$; rather, we say that $s$ is the application of the $\ShSa$ operation to $G$, $S$, $T$, $u$, and $W$. Suppose $s=\ShSa(G,S,T,u,W)$. We say \emph{$s$ shades $u$}.  In the case where $T(u)=1$, we say \emph{$s$ removes $u$} or we say \emph{$u$ is removed from $G$ by $s$}.  Moreover, we say \emph{$s$ saves on $w$} for each $w\in W$.  Finally for any $\Omega \subseteq W$, we say \emph{$s$ saves on $\Omega$}.  Figure~\ref{fig:IntroSample} is a pictorial illustration of an application of the $\ShSa$ operation.

\begin{figure}[h!]

    \begin{center}\begin{tikzpicture}[scale=0.8,every node/.style={circle, draw, fill, inner sep=0pt, minimum size=4pt, text=black}]

			\node [anchor=center, label={[shift={(0.1,-0.2)}]left:$v_1\hspace{-1mm}:\hspace{-1mm}(3,1)$}, color=black] (0) at (2.25,-0.75) {};
			\node [anchor=center, label={[shift={(0,-0.5)}]above:$v_2\hspace{-1mm}:\hspace{-1mm}(2,2)$}, color=black] (1) at (3.5,-0.25) {};
			\node [anchor=center, label={[shift={(0,0.5)}]below:$v_3\hspace{-1mm}:\hspace{-1mm}(2,2)$}, color=black] (2) at (3.5,-1.25) {};
			\node [anchor=center, label={[shift={(-0.1,0.2)}]right:$v_4\hspace{-1mm}:\hspace{-1mm}(3,2)$}, color=black] (3) at (4.75,-0.75) {};

			\draw (0.center) to (1.center);
			\draw (0.center) to (2.center);
			\draw (3.center) to (1.center);
			\draw (3.center) to (2.center);

            \draw [->, thick] (5.9,-0.9) to (11.6,-0.9);
            \node [anchor=center, label={center:$\ShSa(G,S,T,v_4,\{v_3\})$}, color=white] (5) at (8.75,-1.25) {};

			\node [anchor=center, label={[shift={(0.1,0.2)}]left:$v_1\hspace{-1mm}:\hspace{-1mm}(3,1)$}, color=black] (0) at (12.75,-.75) {};
			\node [anchor=center, label={[shift={(0,-0.5)}]above:$v_2\hspace{-1mm}:\hspace{-1mm}(1,2)$}, color=black] (1) at (14,-.25) {};
			\node [anchor=center, label={[shift={(0,0.5)}]below:$v_3\hspace{-1mm}:\hspace{-1mm}(2,2)$}, color=black] (2) at (14,-1.25) {};
			\node [anchor=center, label={[shift={(-0.1,-0.2)}]right:$v_4\hspace{-1mm}:\hspace{-1mm}(3,1)$}, color=black] (3) at (15.25,-.75) {};

			\draw (0.center) to (1.center);
			\draw (0.center) to (2.center);
			\draw (3.center) to (1.center);
			\draw (3.center) to (2.center);

		\end{tikzpicture}\end{center}
        \vspace{-2.25cm}

    \caption{Pictorial illustration of an application of the $\ShSa$ operation with inputs $G=C_4$, $S$, $T$, $v_4$, and $\{v_3\}$.  On the left, for each vertex $v_i\in V(G)$, the ordered pair next to $v_i$ is $(S(v_i),T(v_i))$.  Suppose $G_o$ and $(S_o,T_o)$ is the output of $\ShSa(G,S,T,v_4,\{v_3\})$.  On the right, for each vertex $v_i\in V(G_o)$, the ordered pair next to $v_i$ is $(S_o(v_i),T_o(v_i))$.}
    \label{fig:IntroSample}
\end{figure}
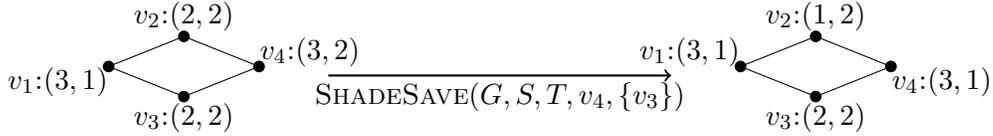

If $G_o$ and $(S_o,T_o)$ is the output of $\ShSa(G,S,T,u,W)$, we say $\ShSa(G,S,T,u,W)$ is \emph{legal} if $S(u)+T(u)>\sum_{w\in W}(S(w)+T(w))$ and $S_o$ is nonnegative for all $v\in V(G_o)$.  In this paper we will refer to any function with domain $V(G)$ and codomain $\{0,1,2,\ldots\}$ as a \emph{shield of $G$}, and we will refer to any function with domain $V(G)$ and codomain $\N$ as a \emph{target of $G$}.  Consequently, if $\ShSa(G,S,T,u,W)$ is legal then $S_o$ is a shield of $G_o$.

Suppose $k \in \N$, $G$ is a nonempty graph, $S$ is a shield of $G$, and $T$ is a target of $G$.  Suppose $\mathcal{S}=(s_1,\ldots,s_k)$ and $s_i=\ShSa(G_i,S_i,T_i,u_i,W_i)$ for each $i \in [k]$.  We say $\mathcal{S}$ is a \emph{$(G,S,T)$-legal sequence} if $G_1=G$, $S_1=S$, $T_1=T$, $G_{i+1}$ and $(S_{i+1},T_{i+1})$ are the outputs of $s_i$ for each $i \in [k-1]$, and $s_i$ is legal for all $i\in[k]$~\footnote{We will use this naming scheme for the inputs and outputs of the operations of a $(G,S,T)$-legal sequence unless otherwise noted.}. Furthermore, we say $\mathcal{S}$ is a \emph{restricted $(G,S,T)$-legal sequence} if $|W_i|\in\{0,1\}$ for all $i\in[k]$.  In the special case where $G$ is the empty graph, since $V(G)=\emptyset$, the empty function is the only function that is a shield or target of $G$.  For any graph $G$, including the empty graph, we take the empty sequence to be a restricted $(G,S,T)$-legal sequence.  When $\mathcal{S}$ is a nonempty $(G,S,T)$-legal sequence, the \emph{final outputs} of  $\mathcal{S}$ are the outputs of the last term of $\mathcal{S}$.  When $\mathcal{S}$ is an empty $(G,S,T)$-legal sequence, the \emph{final outputs} of  $\mathcal{S}$ are $G$ and $(S,T)$. We write $G_\mathcal{S}$ and $(S_{\mathcal{S}},T_{\mathcal{S}})$ for the final outputs of a $(G,S,T)$-legal sequence $\mathcal{S}$.  In the special case where we only consider a single $(G,S,T)$-legal sequence with $q$ terms, we will write $G_{q+1}$ and $(S_{q+1},T_{q+1})$ for its final outputs.  When constructing a $(G,S,T)$-legal sequence $(s_1, \ldots, s_k)$, if we wish to define $s_i$ as $\ShSa(G_i,S_i,T_i,u_i,W_i)$  we will often write $s_i = \ShSa(u_i,W_i)$ for short when the first three inputs are clear from context.  Similarly, if we wish to define $s_i$ as $\Sh(G_i,S_i,T_i,u_i)$  we will often write $s_i = \Sh(u_i)$ for short.  See Figure~\ref{fig:IntroSample2} for a pictorial illustration of a $(G,S,T)$-legal sequence.

\begin{figure}[h!]

    \begin{center}\begin{tikzpicture}[scale=0.8,every node/.style={circle, draw, fill, inner sep=0pt, minimum size=4pt, text=black}]
			\node [anchor=center, label={[shift={(0.1,-0.2)}]left:$v_1\hspace{-1mm}:\hspace{-1mm}(3,2)$}, color=black] (0) at (-1.25,0) {};
			\node [anchor=center, label={[shift={(0.3,-0.5)}]above:$v_2\hspace{-1mm}:\hspace{-1mm}(3,2)$}, color=black] (1) at (0,0.5) {};
			\node [anchor=center, label={[shift={(-0.3,0.5)}]below:$v_3\hspace{-1mm}:\hspace{-1mm}(3,2)$}, color=black] (2) at (0,-0.5) {};
			\node [anchor=center, label={[shift={(-0.1,0.2)}]right:$v_4\hspace{-1mm}:\hspace{-1mm}(3,2)$}, color=black] (3) at (1.25,0) {};

			\draw (0.center) to (1.center);
			\draw (0.center) to (2.center);
			\draw (3.center) to (1.center);
			\draw (3.center) to (2.center);

            \draw [->, thick] (0,-1.1) to (2.75,-1.75);
            \node [anchor=center, label={center:$s_1=\Sh(v_1)$}, color=white] (5) at (0.1,-1.75) {};

			\node [anchor=center, label={[shift={(0.1,-0.2)}]left:$v_1\hspace{-1mm}:\hspace{-1mm}(3,1)$}, color=black] (0) at (2.75,-0.75) {};
			\node [anchor=center, label={[shift={(0.3,-0.5)}]above:$v_2\hspace{-1mm}:\hspace{-1mm}(2,2)$}, color=black] (1) at (4,-0.25) {};
			\node [anchor=center, label={[shift={(-0.3,0.5)}]below:$v_3\hspace{-1mm}:\hspace{-1mm}(2,2)$}, color=black] (2) at (4,-1.25) {};
			\node [anchor=center, label={[shift={(-0.1,0.2)}]right:$v_4\hspace{-1mm}:\hspace{-1mm}(3,2)$}, color=black] (3) at (5.25,-0.75) {};

			\draw (0.center) to (1.center);
			\draw (0.center) to (2.center);
			\draw (3.center) to (1.center);
			\draw (3.center) to (2.center);

            \draw [->, thick] (4,-1.85) to (6.75,-2.5);
            \node [anchor=center, label={center:$s_2=\ShSa(v_4,\{v_3\})$}, color=white] (5) at (3.1,-2.5) {};

			\node [anchor=center, label={[shift={(0.1,-0.2)}]left:$v_1\hspace{-1mm}:\hspace{-1mm}(3,1)$}, color=black] (0) at (6.75,-1.5) {};
			\node [anchor=center, label={[shift={(0.3,-0.5)}]above:$v_2\hspace{-1mm}:\hspace{-1mm}(1,2)$}, color=black] (1) at (8,-1) {};
			\node [anchor=center, label={[shift={(-0.3,0.5)}]below:$v_3\hspace{-1mm}:\hspace{-1mm}(2,2)$}, color=black] (2) at (8,-2) {};
			\node [anchor=center, label={[shift={(-0.1,0.2)}]right:$v_4\hspace{-1mm}:\hspace{-1mm}(3,1)$}, color=black] (3) at (9.25,-1.5) {};

			\draw (0.center) to (1.center);
			\draw (0.center) to (2.center);
			\draw (3.center) to (1.center);
			\draw (3.center) to (2.center);

            \draw [->, thick] (8,-2.85) to (10.75,-3.5);
            \node [anchor=center, label={center:$s_3=\ShSa(v_4,\{v_2\})$}, color=white] (5) at (7.1,-3.5) {};

			\node [anchor=center, label={[shift={(0.1,-0.2)}]left:$v_1\hspace{-1mm}:\hspace{-1mm}(3,1)$}, color=black] (0) at (10.75,-2.5) {};
			\node [anchor=center, label={[shift={(0.3,-0.5)}]above:$v_2\hspace{-1mm}:\hspace{-1mm}(1,2)$}, color=black] (1) at (12,-2) {};
			\node [anchor=center, label={[shift={(-0.3,0.5)}]below:$v_3\hspace{-1mm}:\hspace{-1mm}(1,2)$}, color=black] (2) at (12,-3) {};
			\node [anchor=center, label={[shift={(-0.1,0.2)}]right:$v_4$}, color=black, fill=white] (3) at (13.25,-2.5) {};

			\draw (0.center) to (1.center);
			\draw (0.center) to (2.center);

		\end{tikzpicture}\end{center}
        \vspace{-1.75cm}

    \caption{
    Pictorial illustration of the $(G,S,T)$-legal sequence $\mathcal{S}=(s_1,s_2,s_3)$ where $G=C_4$, $S(v)=3$, and $T(v)=2$.  Suppose $j \in [3]$.  For the diagram to the left of the arrow corresponding to $s_j$, the ordered pair next to each vertex is $(S_j(v_i),T_j(v_i))$ for each $i\in[4]$.  The right-most diagram is a pictorial illustration of the final output of $\mathcal{S}$ (note that $v_4$ has been removed from $G_3$ by $s_3$).}
    \label{fig:IntroSample2}
\end{figure}
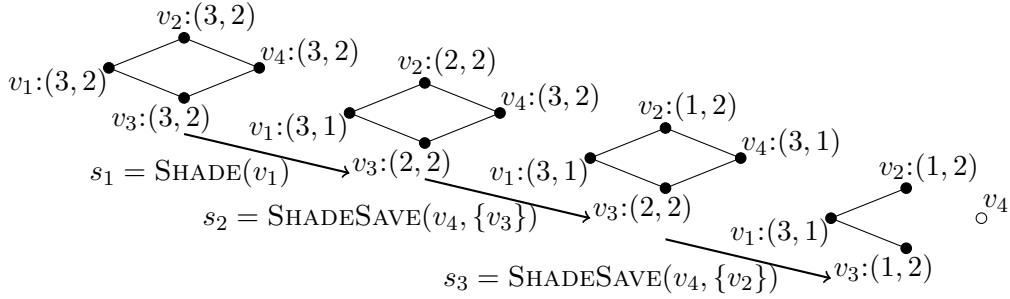

We say a $(G,S,T)$-legal sequence $\mathcal{S}$ is a \emph{complete $(G,S,T)$-legal sequence} if $G_{\mathcal{S}}$ is the empty graph.  A $(G,S,T)$-legal sequence $\mathcal{S}$ is a \emph{complete, restricted $(G,S,T)$-legal sequence} when $\mathcal{S}$ is a complete $(G,S,T)$-legal sequence and a restricted $(G,S,T)$-legal sequence.   Note that in the case where $G$ is the empty graph, the empty sequence is a complete, restricted $(G,S,T)$-legal sequence.  Notice that the length of a complete $(G,S,T)$-legal sequence is $\sum_{v\in V(G)}T(v)$.  

Graph $G$ is \emph{$ST^{(3)}$-$(S,T)$-degenerate} if there exists a complete $(G,S,T)$-legal sequence.  Likewise, $G$ is \emph{$ST^{(4)}$-$(S,T)$-degenerate} if there exists a complete, restricted $(G,S,T)$-legal sequence.  Suppose $f: V(G) \rightarrow \N$ is a function.  We now define what it means for a graph to be $ST^{(j)}$-$f$-degenerate for $j\in\{3,4\}$ which were notions informally discussed in Section~\ref{intro}.  Let $g(v)=f(v)-1$ and $h(v)=1$ for all $v\in V(G)$. Then $G$ is \emph{$ST^{(j)}$-$f$-degenerate} if $G$ is $ST^{(j)}$-$(g,h)$-degenerate.  These definitions are equivalent to those given in~\cite{ZZZ23}.

 For $s\in\{0,1,2,\ldots\}$, $t\in\N$, and $j\in\{3,4\}$, we say that $G$ is \emph{$ST^{(j)}$-$(s,t)$-degenerate} if it is $ST^{(j)}$-$(S,T)$-degenerate where $S$ is the shield of $G$ that is identically $s$ and $T$ is the target of $G$ that is identically $t$ (in this case we will refer to a complete $(G,S,T)$-legal sequence as a \emph{complete $(G,s,t)$-legal sequence}).
 
 For $j\in\{3,4\}$, the \emph{fractional strict type-$j$ degeneracy} of $G$, denoted $ST^{(j)*}(G)$, is given by
\begin{equation*}
	ST^{(j)*}(G)=\inf\left\{\frac{s+t}{t}:G\text{ is $ST^{(j)}$-$(s,t)$-degenerate}\right\}.
\end{equation*}
Clearly, for any graph $G$ $ST^{(3)*}(G)\leq ST^{(4)*}(G)$. For $j\in\{3,4\}$, a graph $G$ is $ST^{(j)}$-$(s,1)$-degenerate if and only if $G$ is $ST^{(j)}$-$(s+1)$-degenerate; consequently, $ST^{(j)*}(G) \leq ST^{(j)}(G)$.

For edgeless graphs the fractional strict type-3 and fractional strict type-4 degeneracy are easy to compute since we can construct a complete, restricted, legal sequence by repeatedly shading each vertex of such a graph until we achieve the empty graph (regardless of the specific shield and target of $G$).
\begin{obs}\label{obs:Trivial}
Suppose $G$ is a (possibly empty) graph where $|E(G)|=0$, $S$ is a shield of $G$, $T$ is a target of $G$, and $j\in\{3,4\}$.  Then, $G$ is $ST^{(j)}$-$(S,T)$-degenerate. Consequently, $ST^{(j)*}(G) = 1$.  
\end{obs}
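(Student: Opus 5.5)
The plan is to build an explicit complete, restricted $(G,S,T)$-legal sequence made only of $\Sh$ operations. Since $|E(G)|=0$, every vertex $u$ has $N_G(u)=\emptyset$, so the only allowed choice is $W=\emptyset$ and each operation is $\Sh(G_i,S_i,T_i,u_i)$ with $|W_i|=0$. Any such sequence is therefore automatically restricted. A legal sequence of this kind witnesses both $ST^{(3)}$- and $ST^{(4)}$-$(S,T)$-degeneracy at once.

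Fix an ordering $v_1,\ldots,v_n$ of $V(G)$. The sequence shades $v_1$ exactly $T(v_1)$ times, then $v_2$ exactly $T(v_2)$ times, and so on. Legality of each step is checked directly from the definition.
\begin{itemize}
\item The condition $S(u)+T(u)>\sum_{w\in\emptyset}(S(w)+T(w))=0$ holds because $T(u)\geq 1$ and $S(u)\geq 0$.
\item $S_o$ stays nonnegative because $u$ has no neighbors, so $S_o=S$ on $V(G_o)$.
\item Each shading of $u$ with $T(u)>1$ lowers $T(u)$ by one and leaves $G$ unchanged. The shading performed when $T(u)=1$ removes $u$.
\item Edgelessness is preserved throughout, since each $G_i$ is an induced subgraph of $G$.
\end{itemize}
After $\sum_{v}T(v)$ steps the graph is empty, so the sequence is complete. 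If $G$ is the empty graph, the empty sequence works by the stated convention.

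For the consequence, take $s=0$ and any $t\in\N$. The previous paragraph shows $G$ is $ST^{(j)}$-$(0,t)$-degenerate, so $ST^{(j)*}(G)\leq (0+t)/t=1$. For the reverse bound, every admissible pair has $s\geq 0$ and $t\geq1$, so $(s+t)/t\geq1$ and the infimum is at least $1$. Hence $ST^{(j)*}(G)=1$.

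No step is a real obstacle. The only point needing care is confirming that each step in the construction is legal under the paper's definitions.
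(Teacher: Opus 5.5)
Your proof is correct and matches the paper's approach: the paper justifies this observation by noting that repeatedly shading each vertex of an edgeless graph until it is empty gives a complete, restricted legal sequence for any shield and target, which is exactly your construction. Your handling of the consequence (taking $s=0$ for the upper bound and using $(s+t)/t\geq 1$ for the lower bound) correctly fills in the detail the paper leaves implicit.
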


In some graph $G$, if for all $v\in V(G)$ the values of $S(v)$ are large enough in relation to the values of $T(w)$ for all $w\in N_G(v)$, it is easy to construct a complete, restricted $(G,S,T)$-legal sequence by greedily shading the vertices.  The next observation makes this precise.

\begin{obs}\label{obs:DegenerateByBackDegree}
    Suppose $v_1,\ldots,v_n$ is an arbitrary ordering of the vertices of graph $G$.  Let $N_i^-=\{v_j:v_j\in N_G(v_i)\text{ and }j\in[i-1]\}$. If $S$ and $T$ are a shield and a target of $G$ satisfying $S(v_i)\geq\sum_{v_j\in N_i^-}T(v_j)$ for each $i\in[n]$ then $G$ is $ST^{(4)}$-$(S,T)$-degenerate.  Consequently $G$ is $ST^{(4)}$-$(\col(G)-1,1)$-degenerate and $ST^{(4)*}(G)\leq\col(G)$.
\end{obs}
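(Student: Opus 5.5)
We build the complete, restricted $(G,S,T)$-legal sequence explicitly, processing vertices in the given order $v_1,\ldots,v_n$. For $i=1,2,\ldots,n$ in turn, we apply $\Sh(v_i)$ exactly $T(v_i)$ times in a row, so that the last of these removes $v_i$. Every operation has $W=\emptyset$. The sequence is therefore automatically restricted, and the first legality condition $S(u)+T(u)>0$ always holds because $T(u)\geq 1$. After all $\sum_{v}T(v)$ operations every vertex has been removed, so the sequence is complete. What remains is to check that the shield never becomes negative.

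For the nonnegativity check, fix a vertex $v_i$ and track its shield value while it is still present. Shading $v_j$ lowers $S(v_i)$ by one exactly when $v_j$ is a current neighbor of $v_i$ in the graph. Vertices $v_j$ with $j>i$ are shaded only after $v_i$ has been removed, so they never affect $v_i$. A vertex $v_j$ with $j<i$ that is adjacent to $v_i$ is shaded exactly $T(v_j)$ times, and during all of these shadings $v_i$ is still present. Hence the shield of $v_i$ never drops below
\begin{equation*}
S(v_i)-\sum_{v_j\in N_i^-}T(v_j)\geq 0.
\end{equation*}
Shading $v_i$ itself does not change $S(v_i)$. Thus $S_o$ is nonnegative after every operation, every operation is legal, and $G$ is $ST^{(4)}$-$(S,T)$-degenerate.

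For the consequence, let $d=\col(G)-1$ be the degeneracy of $G$. Choose an ordering $v_1,\ldots,v_n$ in which each $v_i$ has at most $d$ neighbors among earlier vertices, so $|N_i^-|\leq d$. Take $S\equiv d$ and $T\equiv 1$. Then $\sum_{v_j\in N_i^-}T(v_j)=|N_i^-|\leq d=S(v_i)$, so the first part shows that $G$ is $ST^{(4)}$-$(\col(G)-1,1)$-degenerate. By the definition of $ST^{(4)*}(G)$ as an infimum of $(s+t)/t$, this gives
\begin{equation*}
ST^{(4)*}(G)\leq \frac{(\col(G)-1)+1}{1}=\col(G).
\end{equation*}

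There is no real obstacle in this argument. The only point needing care is the bookkeeping in the nonnegativity check: shades of $v_j$ affect $v_i$ only while both vertices are present, and this is exactly what the consecutive-block ordering guarantees.
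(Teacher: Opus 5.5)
Your proof is correct and is the greedy argument the paper has in mind. The paper only sketches it, noting that the sequence consists solely of shade operations. You shade each $v_i$ exactly $T(v_i)$ times in the given order with $W=\emptyset$, so the shield of $v_i$ drops by exactly $\sum_{v_j\in N_i^-}T(v_j)$ before $v_i$ is removed. Your deduction of $ST^{(4)*}(G)\leq\col(G)$ from a degeneracy ordering with $S\equiv\col(G)-1$ and $T\equiv 1$ is also right.
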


The complete, restricted $(G,S,T)$-legal sequence that one would construct to prove Observation~\ref{obs:DegenerateByBackDegree} would only need to consist of shade operations.  As we will see below, when fractional strict type-$4$ degeneracy gives the best known bound on the fractional DP-chromatic number of a graph, we will often construct complete, restricted, legal sequences that include operations that save on vertices.  Finally, to ensure thoroughness and clarity, we make a technical remark.
\begin{rem}\label{rem:inductiveDefinition}
    Suppose $n$ is an integer such that $n \geq 2$, $G$ is a graph, $S$ is a shield of $G$, and $T$ is a target of $G$.  In many of our proofs we wish to inductively construct an $n$-term $(G,S,T)$-legal sequence with $i^{th}$ term denoted $s_i$.  To do this we define $s_1$, then for each $i\in[2:n]$ we define $s_i$ based on the assumption that $s_1,\ldots,s_{i-1}$ are already defined and legal.   We adopt the convention that if $s_i$ is defined so that it is not legal, our construction terminates.  Indeed, when $i\in [2:n]$ and $s_i$ is defined so that it is not legal, our construction only produces the $(G,S,T)$-legal sequence $(s_1,\ldots,s_{i-1})$.  Note that in the case where $s_1$ is not legal, then our construction produces the empty sequence, which is a $(G,S,T)$-legal sequence by definition. 
\end{rem}

\subsection{Foundational results}\label{subsec:foundResults}
 We begin with two observations that are easy to prove.
 
\begin{obs}\label{obs:Sequence}
For $k \in \N$ and $j\in\{3,4\}$ suppose graph $G$ is $ST^{(j)}$-$(S,T)$-degenerate.  Suppose $(s_1, \ldots, s_k)$ is a complete $(G,S,T)$-legal sequence (in the case where $j=4$ suppose this sequence is restricted as well).  Recall for $q\in[2:k+1]$ that $G_q$ (which may be the empty graph) and $(S_q,T_q)$ are the outputs of $s_{q-1}$.  Then $\sum_{v\in V(G)}T(v)-\sum_{v\in V(G_q)}T_q(v) = q-1$ and $G_q$ is $ST^{(j)}$-$(S_q,T_q)$-degenerate.
\end{obs}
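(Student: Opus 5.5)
The plan is to prove both assertions by tracking the quantity $\Phi_q=\sum_{v\in V(G_q)}T_q(v)$ along the sequence. I will first verify the one-step identity $\Phi_{i+1}=\Phi_i-1$ for every $i\in[k]$, with the convention $\Phi_1=\sum_{v\in V(G)}T(v)$. Fix $s_i=\ShSa(G_i,S_i,T_i,u_i,W_i)$ and split according to the definition of $\ShSa$.

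\begin{enumerate}[(i)]
\item If $T_i(u_i)=1$, then $G_{i+1}=G_i-u_i$ and $T_{i+1}=T_i$ restricted to $V(G_{i+1})$. Hence the sum loses exactly the term $T_i(u_i)=1$.
\item If $T_i(u_i)>1$, then $G_{i+1}=G_i$, and $T_{i+1}$ agrees with $T_i$ except that $T_{i+1}(u_i)=T_i(u_i)-1$. Again the sum drops by exactly $1$.
\end{enumerate}

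A trivial induction on $q$ then gives $\Phi_1-\Phi_q=q-1$ for every $q\in[2:k+1]$, which is the first claim. When $G_q$ is empty, the second sum is $0$ by the convention on empty sums, so this case needs no separate treatment.

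For the second claim, I will show that the tail $(s_q,s_{q+1},\ldots,s_k)$ is a complete $(G_q,S_q,T_q)$-legal sequence. Its first term has inputs $G_q$, $S_q$, $T_q$. For each $i\ge q$, the outputs of $s_i$ are the inputs of $s_{i+1}$, and each $s_i$ is legal, since all of these properties are inherited from the original sequence. Its final outputs coincide with those of $(s_1,\ldots,s_k)$, so the final graph is empty and the tail is complete. If the original sequence is restricted, meaning $|W_i|\in\{0,1\}$ for all $i$, then so is the tail.

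The edge case is $q=k+1$. Here the tail is the empty sequence, and $G_{k+1}$ is the empty graph because the original sequence is complete. By the paper's convention, the empty sequence is a complete, restricted $(G_{k+1},S_{k+1},T_{k+1})$-legal sequence. This requires $S_{k+1}$ to be a legitimate shield, which holds because legality of $s_k$ makes $S_{k+1}$ nonnegative. In every case, $G_q$ is therefore $ST^{(j)}$-$(S_q,T_q)$-degenerate.

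There is no real obstacle; the only points needing care are the bookkeeping conventions. These are the empty-graph and empty-sequence case at $q=k+1$, and the fact that legality of $s_{q-1}$ guarantees $S_q$ is a shield, i.e.\ nonnegative, so that the notion of $(S_q,T_q)$-degeneracy is defined.
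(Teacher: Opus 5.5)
Your proposal is correct and is exactly the routine argument the paper intends; the paper states this observation as easy to prove and gives no written proof. The argument: each $\ShSa$ operation lowers the total target by exactly one, and the suffix $(s_q,\ldots,s_k)$ of a complete (restricted) legal sequence is itself a complete (restricted) $(G_q,S_q,T_q)$-legal sequence, with the empty sequence covering the case $q=k+1$.
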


\begin{obs}\label{obs:Components}
    Suppose $G$ is a graph with components $H_1,\ldots,H_n$ and $j\in\{3,4\}$.  Also, suppose $H_i$ is $ST^{(j)}$-$(S_i,T_i)$-degenerate for each $i\in[n]$.  If $S$ is the shield of $G$ and $T$ is the target of $G$ such that for each $i\in[n]$ and $v\in V(H_i)$, $S(v)=S_i(v)$ and $T(v)=T_i(v)$, then $G$ is $ST^{(j)}$-$(S,T)$-degenerate.
\end{obs}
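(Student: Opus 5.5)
The plan is to build the complete legal sequence for $G$ by concatenating complete legal sequences for its components. The key point is that operations on one component never touch vertices of another component.

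For each $i\in[n]$, fix a complete $(H_i,S_i,T_i)$-legal sequence $\mathcal{S}^{(i)}$, chosen restricted when $j=4$. Then let $\mathcal{S}$ be the concatenation $\mathcal{S}^{(1)},\mathcal{S}^{(2)},\ldots,\mathcal{S}^{(n)}$. In this concatenation, each operation $\ShSa(H_i',S',T',u,W)$ is reinterpreted as an operation on the current graph of the whole sequence, keeping the same vertex $u$ and the same set $W$.

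The main claim, proved by induction along $\mathcal{S}$, is the following. Suppose $\mathcal{S}^{(1)},\ldots,\mathcal{S}^{(i-1)}$ and the first $r$ terms of $\mathcal{S}^{(i)}$ have been applied to $(G,S,T)$. Then the current graph is the disjoint union of the current graph of $\mathcal{S}^{(i)}$ after $r$ terms and $H_{i+1},\ldots,H_n$. Moreover, the current shield and target agree with those of $\mathcal{S}^{(i)}$ on the first part and with $S,T$ on the untouched components.

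The inductive step uses three facts.
\begin{enumerate}[(a)]
\item Since $u\in V(H_i)$, we have $N_G(u)=N_{H_i}(u)$, and the same holds in the current graphs. Hence $W$ is a valid fourth input, and the operation changes $S$ only at neighbors of $u$, which lie in $H_i$.
\item The operation changes $T$ only at $u$, and it removes only $u$ from the graph.
\item Legality transfers. The inequality $S(u)+T(u)>\sum_{w\in W}(S(w)+T(w))$ involves only vertices of $H_i$, whose values agree with those in $\mathcal{S}^{(i)}$. Nonnegativity of the output shield holds on $H_i$ by the legality of $\mathcal{S}^{(i)}$, and it holds elsewhere because those values are unchanged and nonnegative.
\end{enumerate}

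It follows that $\mathcal{S}$ is $(G,S,T)$-legal. Its final graph is empty because each $\mathcal{S}^{(i)}$ is complete. If every $\mathcal{S}^{(i)}$ is restricted, then so is $\mathcal{S}$, since each $|W|\in\{0,1\}$ is preserved. Empty components and empty sequences are handled by the conventions already in place.

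No step is a real obstacle. The only care needed is in bookkeeping the induction on current graphs so that neighborhoods in the current graph of $G$ coincide with neighborhoods in the current graph of $H_i$. This coincidence holds because $G$ has no edges between distinct components.
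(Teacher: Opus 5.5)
Your proposal is correct and is essentially the argument the paper intends: the paper states this observation as ``easy to prove'' without writing it out, and the natural proof is to concatenate complete (restricted, when $j=4$) legal sequences of the components. Your bookkeeping, namely that an operation shading $u\in V(H_i)$ only sees $N_{H_i}(u)$ and leaves the shield and target values on the other components untouched, is the same reinterpret-a-sequence-in-a-larger-graph induction the paper carries out explicitly in the proof of Lemma~\ref{lem:appendingVertex}.
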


In many of our proofs, we will show that a complete $(G,S,T)$-legal sequences exists by constructing a shorter $(G,S,T)$-legal sequence, and then showing that it is the beginning of a complete $(G,S,T)$-legal sequence.  We now introduce some terminology that will be useful for such constructions.

Suppose $\mathcal{S}_1 = (s_1, \ldots, s_k)$ and $\mathcal{S}_2 = (x_1, \ldots, x_t)$ are two nonempty $(G,S,T)$-legal sequences with $k \leq t$.  We say that $\mathcal{S}_2$ is an \emph{extension} of $\mathcal{S}_1$ if $s_i = x_i$ for each $i \in [k]$; furthermore, if $\mathcal{S}_1$ and $\mathcal{S}_2$ are also both restricted $(G,S,T)$-legal sequences, then we say that $\mathcal{S}_2$ is a \emph{restricted extension} of $\mathcal{S}_1$.  We say that $\mathcal{S}_2$ is a \emph{complete extension} of $\mathcal{S}_1$ if $\mathcal{S}_2$ is an extension of $\mathcal{S}_1$ and $\mathcal{S}_2$ is a complete $(G,S,T)$-legal sequence; furthermore, if $\mathcal{S}_1$ and $\mathcal{S}_2$ are also both restricted $(G,S,T)$-legal sequences, then we say that $\mathcal{S}_2$ is a \emph{complete, restricted extension} of $\mathcal{S}_1$.  

The following observation is easily obtained by concatenating a sequence that we know exists to the end of a given $(G,S,T)$-legal sequence.

\begin{obs}\label{obs:ExtendingToComplete}
    If $\mathcal{S}$ is a nonempty $(G,S,T)$-legal sequence and $G_{\mathcal{S}}$ is $ST^{(3)}$-$(S_{\mathcal{S}},T_{\mathcal{S}})$-degenerate, then there is a complete extension of $\mathcal{S}$.

    If $\mathcal{S}$ is a nonempty, restricted $(G,S,T)$-legal sequence and $G_{\mathcal{S}}$ is $ST^{(4)}$-$(S_{\mathcal{S}},T_{\mathcal{S}})$-degenerate, then there is a complete, restricted extension of $\mathcal{S}$.
\end{obs}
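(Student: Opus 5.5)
The plan is to concatenate sequences. Suppose first that $\mathcal{S}=(s_1,\ldots,s_k)$ is a nonempty $(G,S,T)$-legal sequence and that $G_{\mathcal{S}}$ is $ST^{(3)}$-$(S_{\mathcal{S}},T_{\mathcal{S}})$-degenerate. By definition there is a complete $(G_{\mathcal{S}},S_{\mathcal{S}},T_{\mathcal{S}})$-legal sequence $\mathcal{R}=(r_1,\ldots,r_m)$. If $\mathcal{R}$ is empty, then $G_{\mathcal{S}}$ is already the empty graph, so $\mathcal{S}$ is itself complete and is trivially a complete extension of itself.

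If $\mathcal{R}$ is nonempty, I will define $\mathcal{S}'=(s_1,\ldots,s_k,r_1,\ldots,r_m)$ and verify that it is a $(G,S,T)$-legal sequence by checking the chaining conditions directly from the definition.
\begin{itemize}
\item The first $k$ terms satisfy the chaining and legality conditions because $\mathcal{S}$ is $(G,S,T)$-legal.
\item The inputs of $r_1$ are $G_{\mathcal{S}}$ and $(S_{\mathcal{S}},T_{\mathcal{S}})$, which are exactly the outputs of $s_k$.
\item For each $i\in[m-1]$, the inputs of $r_{i+1}$ are the outputs of $r_i$, because $\mathcal{R}$ is a legal sequence starting from $(G_{\mathcal{S}},S_{\mathcal{S}},T_{\mathcal{S}})$.
\item Each $r_i$ is legal, since legality of a single $\ShSa$ application depends only on its own inputs.
\end{itemize}
The final outputs of $\mathcal{S}'$ are those of $\mathcal{R}$, so $G_{\mathcal{S}'}$ is the empty graph. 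Hence $\mathcal{S}'$ is complete, and since it agrees with $\mathcal{S}$ on its first $k$ terms, it is a complete extension of $\mathcal{S}$.

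For the restricted statement, I will run the same argument, now taking $\mathcal{R}$ to be a complete, restricted $(G_{\mathcal{S}},S_{\mathcal{S}},T_{\mathcal{S}})$-legal sequence. Every term of $\mathcal{S}'$ then has $|W|\in\{0,1\}$, because the terms of $\mathcal{S}$ and of $\mathcal{R}$ each do. So $\mathcal{S}'$ is restricted, and it is therefore a complete, restricted extension of $\mathcal{S}$.

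The only point requiring care is the indexing convention: the inputs $G_{k+i}$, $S_{k+i}$, $T_{k+i}$ of the $(k+i)$-th term of $\mathcal{S}'$ must coincide with the inputs of $r_i$. This holds by construction, so I expect no genuine obstacle in the argument.
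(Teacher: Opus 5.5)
Your proposal is correct and takes the same approach as the paper, which obtains this observation by concatenating a complete (restricted) $(G_{\mathcal{S}},S_{\mathcal{S}},T_{\mathcal{S}})$-legal sequence onto the end of $\mathcal{S}$. Your separate handling of the case where $\mathcal{R}$ is empty, and your check that the inputs and outputs chain as the definition of a legal sequence requires, fill in exactly the details the paper leaves implicit.
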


For many of our results, we wish to prove some graph $G$ is $ST^{(4)}$-$(s,t)$-degenerate. However, some of our proofs require the flexibility offered by non-constant shield or target functions.  Consequently, we will frequently use the following lemma.

\begin{lem}\label{lem:Monotonicity}
 Let $G$ be a graph (possibly empty).  Suppose $S$ and $S'$ are shields of $G$ satisfying $S(v) \leq S'(v)$ for each $v \in V(G)$, and suppose $T$ and $T'$ are targets of $G$ satisfying $T(v) \geq T'(v)$ for each $v \in V(G)$.  Then, for each $j\in\{3,4\}$, if $G$ is $ST^{(j)}$-$(S,T)$-degenerate, then $G$ is $ST^{(j)}$-$(S',T')$-degenerate.
 \end{lem}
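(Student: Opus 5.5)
We prove a slightly stronger statement by induction on $\sum_{v\in V(G)}T(v)$, the length of any complete $(G,S,T)$-legal sequence. Rather than transporting a given complete $(G,S,T)$-legal sequence verbatim, we simulate it one operation at a time. We maintain the invariant that the current pair $(S,T)$ and the current pair $(S',T')$ live on the same graph, with $S\le S'$ and $T\ge T'$ pointwise. The base case is when $G$ is empty; then the empty sequence is complete and restricted, so there is nothing to prove.

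For the inductive step, let $G$ be nonempty and let $\mathcal{S}=(s_1,\ldots,s_k)$ be a complete $(G,S,T)$-legal sequence, restricted if $j=4$. Write $s_1=\ShSa(G,S,T,u,W)$, and let $G_2$ and $(S_2,T_2)$ be its outputs. By Observation~\ref{obs:Sequence}, $G_2$ is $ST^{(j)}$-$(S_2,T_2)$-degenerate, and $\sum T_2=\sum T-1$. We split into two cases according to whether $T'(u)<T(u)$.

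\emph{Case 1: $T'(u)<T(u)$.} In this case we perform no operation on the primed side. Since $T(u)\ge 2$, the operation $s_1$ does not remove $u$, so $G_2=G$. Moreover $T_2(u)=T(u)-1\ge T'(u)$ and $T_2=T$ elsewhere. Finally $S_2\le S\le S'$, since $s_1$ only decreases shields. So the hypotheses of the lemma hold for $(G_2,S_2,T_2)$ and $(G,S',T')$, and $\sum T_2<\sum T$. The inductive hypothesis then gives that $G$ is $ST^{(j)}$-$(S',T')$-degenerate directly.

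\emph{Case 2: $T'(u)=T(u)$.} Here we apply $s'=\ShSa(G,S',T',u,W')$, where
\[
W'=\{w\in W: S'(w)=S(w)\}.
\]
The point of this choice is that raising shields can break the saving inequality, so we stop saving exactly on the vertices whose shield increased. We check the two legality conditions and the invariant.
\begin{itemize}[leftmargin=*]
\item \emph{Saving inequality.} Since $W'\subseteq W$, $S'(w)=S(w)$ on $W'$, and $T'\le T$, we get
\[
S'(u)+T'(u)\ \ge\ S(u)+T(u)\ >\ \sum_{w\in W}\bigl(S(w)+T(w)\bigr)\ \ge\ \sum_{w\in W'}\bigl(S'(w)+T'(w)\bigr).
\]
\item \emph{Nonnegativity of the new shield.} Let $v\in N_G(u)$ with $v\ne u$.
  \begin{itemize}
  \item If $v\in W'$, then $S'_o(v)=S'(v)=S(v)=S_2(v)$.
  \item If $v\in W\setminus W'$, then $S'(v)\ge S(v)+1$, so $S'_o(v)=S'(v)-1\ge S(v)=S_2(v)$.
  \item If $v\notin W$, then $S'_o(v)=S'(v)-1\ge S(v)-1=S_2(v)$.
  \end{itemize}
  In every case $S'_o(v)\ge S_2(v)\ge 0$, using that $s_1$ is legal.
\item \emph{Same graph and targets.} Because $T'(u)=T(u)$, the operation $s'$ removes $u$ exactly when $s_1$ does, so $G'_o=G_2$. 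Also $T'_o\le T_2$, since both decrease the target at $u$ by one when $u$ is not removed.
\item \emph{Restriction preserved.} We have $|W'|\le|W|$, so if $\mathcal{S}$ is restricted then $s'$ saves on at most one vertex.
\end{itemize}
Thus $s'$ is legal, and $(G_2,S_2,T_2)$ and $(G_2,S'_o,T'_o)$ satisfy the hypotheses of the lemma with $\sum T_2<\sum T$. By induction, $G_2$ is $ST^{(j)}$-$(S'_o,T'_o)$-degenerate. Observation~\ref{obs:ExtendingToComplete}, applied to the one-term sequence $(s')$, then yields a complete extension, restricted when $j=4$.

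The main obstacle is Case~2: increasing $S$ on saved vertices can violate the saving inequality $S(u)+T(u)>\sum_{w\in W}(S(w)+T(w))$. The choice of $W'$ resolves this, because every vertex dropped from $W$ has spare shield to absorb the resulting decrement. Case~1 handles the decrease of targets by simply skipping the surplus shadings of $u$.
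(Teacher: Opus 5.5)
Your proposal is correct and is essentially the paper's own proof. The paper uses the same induction, phrased there as a minimal counterexample, on $\sum_{v\in V(G)}T(v)$. It makes the same case split on whether $T'(u)<T(u)$ or $T'(u)=T(u)$, and in the second case it uses the same choice $W'=\{w\in W: S'(w)=S(w)\}$ together with the same three-way comparison showing $S'_o\ge S_2$ on neighbors of $u$. Your nonnegativity check leaves out the vertices of $G_2$ not adjacent to $u$, including $u$ itself when it is not removed; for these $S'_o(v)=S'(v)\ge S(v)=S_2(v)$ is immediate.
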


\begin{proof}
  We prove this when $j=3$.  Our proof can easily be modified so that it applies when $j=4$.  For the sake of contradiction, suppose the lemma is false.  Let $\mathcal{C}$ be the set of tuples $(G,S,T,S',T')$ that satisfy the hypotheses of the lemma but don't satisfy the conclusion.  For each $(G,S,T,S',T') \in \mathcal{C}$, let $\lambda(G,S,T,S',T') = \sum_{v \in V(G)} T(v)$.

  Now, among all elements of $\mathcal{C}$ suppose $(G,S,T,S',T')$ is an element so that $\lambda(G,S,T,S',T')$ is as small as possible.  Let $m = \lambda(G,S,T,S',T')$.  Clearly, $m > 0$ since the lemma holds when $G$ is the empty graph.  Suppose $\mathcal{S} = (s_1, \dots, s_m)$ is a complete $(G,S,T)$-legal sequence.  By Observation~\ref{obs:Sequence}, $G_2$ is $ST^{(3)}$-$(S_2,T_2)$-degenerate.  Suppose $s_1=\ShSa(v_1,W_1)$.  Clearly, $T'(v_1)< T(v_1)$ or $T'(v_1)=T(v_1)$.  We will obtain a contradiction in each of these cases.

  If $T'(v_1) < T(v_1)$, then $G_2=G$.  Clearly, $S_2(v) \leq S(v) \leq S'(v)$ for all $v \in V(G)$. Note that $T_2(v_1)=T(v_1)-1\geq T'(v_1)$ and $T_2(v)=T(v)\geq T'(v)$ for all $v\in V(G)\setminus\{v_1\}$.  So $T'(v)\leq T_2(v)$ for all $v\in V(G)$.  Since $(G,S,T,S',T')\in\mathcal{C}$, we know that $G$ is not $ST^{(3)}$-$(S',T')$-degenerate.  Consequently, $(G,S_2,T_2,S',T')\in\mathcal{C}$ but $\lambda(G,S_2,T_2,S',T')=m-1$ which is a contradiction to the minimality of $m$.

  If $T'(v_1)=T(v_1)$, then let $s'_1=\ShSa(G,S',T',v_1,W'_1)$, where $W'_1=\{w:w\in W_{1}\text{ and }S'(w)= S(w)\}$.  Since $s_1$ is legal we know 
     \begin{equation*}
        S'(v_{1})+T'(v_{1}) \geq S(v_{1})+T(v_{1}) > \sum_{v\in W_{1}}(S(v)+T(v)) \geq \sum_{v\in W'_{1}}(S'(v)+T'(v)).
     \end{equation*}
    Consequently, $s'_1$ is legal.  Let $G'_2$ and $(S'_2,T'_2)$ be the outputs of $s'_1$.  Since $T(v_1)=T'(v_1)$, we know $G_2=G'_2$.  It is also clear that $T'_2(v) \leq T_2(v)$ for each $v\in V(G_2)$.  For each $w\in W'_1$ we see that $S_2(w) = S(w) \leq S'(w) = S_2'(w)$.  For each $w\in W_1\setminus W'_1$ we have $S_2(w) = S(w) < S'(w)$ which implies $S'_2(w) = S'(w)-1 \geq S_2(w)$. For each $w\in V(G_2)\setminus W_1$ we have $S_2(w)-S(w)=S'_2(w)-S'(w)$ and $S'(w)\geq S(w)$ which implies $S'_2(w)\geq S_2(w)$.

    By Observation~\ref{obs:ExtendingToComplete}, since $G$ is not $ST^{(3)}$-$(S',T')$-degenerate, $G_2$ is not $ST^{(3)}$-$(S'_2,T'_2)$-degenerate.  Consequently $(G_2,S_2,T_2,S'_2,T'_2)\in\mathcal{C}$ but $\lambda(G_2,S_2,T_2,S'_2,T'_2)=m-1$ which is a contradiction to the minimality of $m$.
 \end{proof}

The following corollary is now immediate and will be used several times in the remainder of the paper.

\begin{cor}\label{cor:FractionalRemovabilityFromUnbalancedST}
    For each $j\in\{3,4\}$, if a nonempty graph $G$ is $ST^{(j)}$-$(S,T)$-degenerate, $s=\max_{v\in V(G)}S(v)$, and $t=\min_{v\in V(G)}T(v)$, then $G$ is $ST^{(j)}$-$(s,t)$-degenerate and $ST^{(j)*}(G)\leq(s+t)/t$.
\end{cor}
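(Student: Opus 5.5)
The corollary follows from Lemma~\ref{lem:Monotonicity} applied with the constant functions, plus the definition of $ST^{(j)*}$.

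Let $S'$ be the shield of $G$ identically equal to $s=\max_{v\in V(G)}S(v)$, and let $T'$ be the target of $G$ identically equal to $t=\min_{v\in V(G)}T(v)$. These are well defined because $G$ is nonempty, so both the maximum and the minimum are taken over a finite nonempty set. Moreover $s\in\{0,1,2,\ldots\}$ since $S$ is a shield, and $t\in\N$ since $T$ is a target. Hence $S'$ is a shield and $T'$ is a target of $G$.

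By construction, $S(v)\le s=S'(v)$ and $T(v)\ge t=T'(v)$ for every $v\in V(G)$. These are exactly the hypotheses of Lemma~\ref{lem:Monotonicity}. Since $G$ is $ST^{(j)}$-$(S,T)$-degenerate, the lemma gives that $G$ is $ST^{(j)}$-$(S',T')$-degenerate. Because $S'$ and $T'$ are the constant functions $s$ and $t$, this says precisely that $G$ is $ST^{(j)}$-$(s,t)$-degenerate.

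It follows that $(s+t)/t$ lies in the set whose infimum defines $ST^{(j)*}(G)$, so $ST^{(j)*}(G)\le (s+t)/t$.

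There is no real obstacle here. The only point needing care is checking that the constant functions have the right codomains: $t\ge 1$ holds because every value of $T$ lies in $\N$, and this is where nonemptiness of $G$ is used.
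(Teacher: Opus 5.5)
Your proposal is correct and is exactly the argument the paper intends: the paper calls this corollary immediate from Lemma~\ref{lem:Monotonicity}, applied with the constant shield $s$ and constant target $t$. The bound $ST^{(j)*}(G)\le (s+t)/t$ then follows because $(s+t)/t$ belongs to the set whose infimum defines $ST^{(j)*}(G)$.
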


It is easy to show that if a graph is $(a,b)$-colorable, then it must be $(ka,kb)$-colorable for all $k\in\N$.  With this in mind,  Erd\H{o}s et al.~\cite{ERT} asked: If $G$ is $(a,b)$-choosable, does it follow that $G$ is $(ka,kb)$-choosable for each $k \in \N$?  Tuza and Voigt~\cite{TV96} showed that the answer to this question is yes when $a=2$ and $b=1$.  However, in~\cite{DH18}, a graph that is $4$-choosable but not $(8,2)$-choosable is constructed.  The analogue of this question in the context of fractional strict type-$3$ and fractional strict type-$4$ degeneracy has an answer of yes.

\begin{thm}\label{thm:ks,kt-degenerate}
    For each $j\in\{3,4\}$, $s\in\{0,1,2,\ldots\}$, and $t\in\N$, if graph $G$ (possibly empty) is $ST^{(j)}$-$(s,t)$-degenerate, then it is $ST^{(j)}$-$(ks,kt)$-degenerate for each $k\in\N$. 
\end{thm}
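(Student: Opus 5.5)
The plan is to take a complete $(G,s,t)$-legal sequence (restricted, when $j=4$) and replace each of its operations by $k$ consecutive copies of itself. If $G$ is empty, the empty sequence works, so assume $G$ is nonempty. Let $\mathcal{S}=(s_1,\ldots,s_m)$ be a complete $(G,s,t)$-legal sequence with $s_i=\ShSa(G_i,S_i,T_i,u_i,W_i)$. Define a new sequence $\mathcal{S}'$ of length $km$ whose block $i$ consists of $k$ consecutive applications of $\ShSa(u_i,W_i)$. Each application uses the current graph, shield and target; the vertex $u_i$ and the set $W_i$ stay the same throughout the block. Since every $W_i$ is reused unchanged, $\mathcal{S}'$ is restricted whenever $\mathcal{S}$ is.

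The key invariant is the following: at the start of block $i$, the current graph is $G_i$, the current shield is $kS_i$, and the current target is $kT_i$. At the very start this holds because the shield is $ks=kS_1$ and the target is $kt=kT_1$. Inside block $i$, after $j$ copies have been applied ($0\le j\le k$), the state is as follows:
\begin{itemize}
\item the target of $u_i$ is $kT_i(u_i)-j$, and the shield of $u_i$ is still $kS_i(u_i)$, since $u_i$ is not its own neighbor;
\item each neighbor $v\notin W_i$ of $u_i$ has shield $kS_i(v)-j$;
\item every other vertex, including every $w\in W_i$, keeps shield $kS_i(w)$ and target $kT_i(w)$.
\end{itemize}
Since $kT_i(u_i)-j\ge 1$ for $j<k$, the vertex $u_i$ is never deleted early. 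It is deleted by the $k$th copy exactly when $T_i(u_i)=1$, because its target then drops from $k$ to $0$. If $T_i(u_i)>1$, the $k$th copy leaves target $k(T_i(u_i)-1)$. In either case, block $i$ ends at graph $G_{i+1}$ with shield $kS_{i+1}$ and target $kT_{i+1}$, which re-establishes the invariant.

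It remains to check that every copy is legal, which is the only real content of the proof. First, consider the inequality at copy $j+1$, for $0\le j\le k-1$. Legality of $s_i$ gives $S_i(u_i)+T_i(u_i)\ge 1+\sum_{w\in W_i}(S_i(w)+T_i(w))$. Hence
\[
kS_i(u_i)+kT_i(u_i)-j \;\ge\; k\sum_{w\in W_i}(S_i(w)+T_i(w))+k-(k-1) \;>\; \sum_{w\in W_i}\bigl(kS_i(w)+kT_i(w)\bigr),
\]
and the right-hand side is exactly the required sum in the current state. Second, consider nonnegativity of the shield. After copy $j+1$, an affected neighbor $v$ has shield $kS_i(v)-(j+1)\ge kS_i(v)-k=kS_{i+1}(v)\ge 0$, where the last inequality holds because $s_i$ was legal. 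Therefore $\mathcal{S}'$ is a legal sequence, and it is complete since it ends at $G_{m+1}$, the empty graph. This proves the theorem for both $j=3$ and $j=4$. I expect no serious obstacle; the only care needed is bookkeeping, namely confirming that $u_i$ is not removed mid-block and that the vertices of $W_i$ are unaffected inside the block.
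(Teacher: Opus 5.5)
Your proof is correct and is essentially the paper's argument. The paper proves the more general lemma for an arbitrary shield $S$ and target $T$ (scaled to $kS$, $kT$) by taking a minimal counterexample with respect to $\sum_{v}T(v)$ and repeating only the first operation $k$ times to reach $(G_2,kS_2,kT_2)$, which is exactly one block of your construction; your block-by-block invariant simply unrolls that induction, and since it never uses that the shield and target are constant, it proves the same general lemma.
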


This theorem comes as an immediate consequence of the following lemma.

\begin{lem}\label{thm:kS,kT-degenerate}
    Suppose $j\in\{3,4\}$, $G$ is a (possibly empty) graph, and $S$ and $T$ are a shield and a target of $G$.  if $G$ is $ST^{(j)}$-$(S,T)$-degenerate, then it is $ST^{(j)}$-$(kS,kT)$-degenerate for each $k\in\N$. 
\end{lem}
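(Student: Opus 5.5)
We will prove the lemma by taking a complete $(G,S,T)$-legal sequence and repeating each of its operations $k$ times in a row. If $G$ is empty there is nothing to prove, so assume $G$ is nonempty. Fix a complete $(G,S,T)$-legal sequence $\mathcal{S}=(s_1,\ldots,s_m)$ with $s_i=\ShSa(G_i,S_i,T_i,u_i,W_i)$; when $j=4$ we take it restricted. We then define a sequence $\mathcal{S}'$ of length $km$ by replacing each $s_i$ with a block of $k$ consecutive operations, each of the form $\ShSa(u_i,W_i)$, applied to the current graph, shield and target.

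The key invariant is the following: if the $i$-th block starts from $G_i$ and $(kS_i,kT_i)$, then all $k$ operations in it are legal and it ends at $G_{i+1}$ and $(kS_{i+1},kT_{i+1})$. We prove this by induction on $i$ and track the block directly.

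\textbf{Effect on the target and the graph.} Since $kT_i(u_i)\ge k$, the first $k-1$ operations of the block only lower the target of $u_i$ by one each, and $u_i$ stays in the graph. After these, $T(u_i)=kT_i(u_i)-(k-1)$.
\begin{itemize}
\item If $T_i(u_i)=1$, this value is $1$, so the $k$-th operation removes $u_i$, exactly as $s_i$ does.
\item Otherwise the $k$-th operation lowers the target to $k(T_i(u_i)-1)$ and $u_i$ remains.
\end{itemize}

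\textbf{Effect on the shield.} Each neighbour of $u_i$ not in $W_i$ loses $1$ per operation, so it loses $k$ in total. Every other vertex, including $u_i$ itself and the vertices of $W_i$, is untouched throughout the block. Hence the block ends at $G_{i+1}$ and $(kS_{i+1},kT_{i+1})$.

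\textbf{Legality.} Consider the $(r+1)$-th operation of the block, where $0\le r\le k-1$. Its left-hand side is $kS_i(u_i)+kT_i(u_i)-r$. Its right-hand side is $\sum_{w\in W_i}k(S_i(w)+T_i(w))$, because vertices of $W_i$ are never modified within the block. Legality of $s_i$ gives, by integrality,
$$S_i(u_i)+T_i(u_i)\ge 1+\sum_{w\in W_i}\bigl(S_i(w)+T_i(w)\bigr).$$
Multiplying by $k$, the left-hand side is at least $k+k\sum_{w\in W_i}(S_i(w)+T_i(w))$. Since $r\le k-1<k$, the strict inequality holds. For nonnegativity of the shield, each intermediate shield value in the block is at least the corresponding final value in $kS_{i+1}$, and $kS_{i+1}\ge 0$ because $S_{i+1}\ge 0$.

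\textbf{Conclusion.} Every operation of $\mathcal{S}'$ uses the same set $W_i$ as the corresponding $s_i$, so $\mathcal{S}'$ is restricted whenever $\mathcal{S}$ is. After the last block the graph is $G_{m+1}$, which is empty, so $\mathcal{S}'$ is a complete (and, when $j=4$, restricted) $(G,kS,kT)$-legal sequence. This proves the lemma.

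The only delicate step is the legality check inside a block. It works because the inequality $S_i(u_i)+T_i(u_i)>\sum_{w\in W_i}(S_i(w)+T_i(w))$ is between integers, which gives slack at least $1$, and scaling by $k$ turns this into slack at least $k$. That covers the at most $k-1$ unit decreases of $T(u_i)$ made earlier in the block.
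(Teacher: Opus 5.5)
Your proof is correct and is essentially the paper's argument: the paper also replaces an operation $\ShSa(v_1,W_1)$ by $k$ consecutive copies, uses the integer slack $S(v_1)+T(v_1)\ge 1+\sum_{w\in W_1}(S(w)+T(w))$ scaled by $k$ to keep every copy legal, and checks that the block ends at $(G_2,kS_2,kT_2)$; the only difference is that the paper handles the remaining operations through a minimal counterexample on $\sum_{v}T(v)$, whereas you use a direct induction over blocks. One small point: when $s_i$ removes $u_i$, the intermediate shield value of $u_i$ has no counterpart in $kS_{i+1}$, but it stays equal to $kS_i(u_i)\ge 0$ throughout the block, so nonnegativity still holds.
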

\begin{proof}
     We prove this when $j=3$.  Our proof can easily be modified so that it applies when $j=4$.  For the sake of contradiction, suppose the lemma is false.  Let $\mathcal{C}$ be the set of all tuples $(G,S,T,k)$ that satisfy the hypothesis of the lemma but don't satisfy the conclusion.  For each $(G,S,T,k)\in\mathcal{C}$ let $\lambda(G,S,T,k)=\sum_{v\in V(G)}T(v)$.

    Now, among all elements of $\mathcal{C}$ suppose $(G,S,T,k)$ is an element so that $\lambda(G,S,T,k)$ is as small as possible.  Let $m=\lambda(G,S,T,k)$.  Clearly, $m>0$ since the empty graph is $ST^{(3)}$-$(kS,kT)$-degenerate.  Suppose $\mathcal{S}=(s_1,\ldots,s_m)$ is a complete $(G,S,T)$-legal sequence and $s_1=\ShSa(v_1,W_1)$.  Clearly $S(v_1)+T(v_1)\geq\sum_{w\in W_1}(S(w)+T(w))+1$ which means $kS(v_1)+kT(v_1)\geq\sum_{w\in W_1}(kS(w)+kT(w))+k$.

    For $G'=G$, $S'=kS$, and $T'=kT$, we will inductively construct a $(G',S',T')$-legal sequence of at most $k$ terms.  Suppose $\mathcal{S}'$ is our inductively constructed sequence with at most $k$ term where the $i^{th}$ term is $s'_i=\ShSa(v_1,W_1)$.  Now, the following statements can easily be proven by induction on $i$ for each $i \in [0:k]$ 
    
     \begin{enumerate}[(1)]
        \item $G'_{i+1} = G$ and $G'_{k+1} = G_2$,
        
        \item $\displaystyle
        S'_{i+1}(v)=\left\{\begin{array}{l l}
            kS(v)-i&\text{if }v\in N_G(v_1)\setminus W_1\\
            kS(v)&\text{otherwise}
        \end{array}\right.
        \hspace{0.05in}\text{and}\hspace{0.075in}
        T'_{i+1}(v)=\left\{\begin{array}{l l}
            kT(v)-i&\text{if }v=v_1\\
            kT(v)&\text{otherwise.}
        \end{array}\right.$

        Note in the case where $i=k$ that $T'_{k+1}$ is defined over the vertices in $V(G_2)$ and it is possible that $v_1\not\in V(G_2)$.
        
        \item  if $i \geq 1$, then $(s'_1, \dots, s'_i)$ is a $(G',S',T')$-legal sequence.  
    \end{enumerate}
    
    Now, 
    \begin{equation*}
        S'_{k+1}(v)=\left\{\begin{array}{l l}
            kS(v)-k&\text{if }v\in N_G(v_1)\setminus W_1\\
            kS(v)&\text{otherwise}
        \end{array}\right.
        \hspace{0.25in}\text{and}\hspace{0.25in}
        T'_{k+1}(v)=\left\{\begin{array}{l l}
            kT(v)-k&\text{if }v=v_1\\
            kT(v)&\text{otherwise.}
        \end{array}\right.
    \end{equation*}
    As a consequence of this, we see that $G'_{k+1}=G_2$, $S'_{k+1}=kS_2$, and $T'
    _{k+1}=kT_2$.  By Observation~\ref{obs:ExtendingToComplete} we see that $G_2$ is $ST^{(3)}$-$(S_2,T_2)$-degenerate but not $ST^{(3)}$-$(kS_2,kT_2)$-degenerate.  Consequently, $(G_2,S_2,T_2,k)\in\mathcal{C}$ but $\lambda(G_2,S_2,T_2,k)=m-1$ which is a contradiction to the minimality of $m$.
\end{proof}

Our initial interest in studying fractional strict type-$3$ and fractional strict type-$4$ degeneracy came from its application to fractional DP-coloring.  We will now prove that the fractional strict type-$3$ degeneracy of a graph is an upper bound on the fractional DP-chromatic number of the graph.  First we need a definition from~\cite{D25}.  For graph $G$ and functions $F_1:V(G)\to\N$ and $F_2:V(G)\to\N$, we say \emph{$G$ is $(F_1,F_2)$-DP-colorable} if for any cover $\mathcal{H}=(L,H)$ of $G$ such that $|L(v)|=F_1(v)$ for all $v\in V(G)$, there is an independent set of vertices $T\subseteq V(H)$ satisfying $|T\cap L(v)|=F_2(v)$ for all $v\in V(G)$.

\begin{lem}\label{lem:STDP}
Suppose $S$ and $T$ are a shield and target of graph $G$.  If $G$ is $ST^{(3)}$-$(S,T)$-degenerate, then $G$ is $(S+T,T)$-DP-colorable.  Consequently if $G$ is $ST^{(3)}$-$(s,t)$-degenerate then $\chi_{_{DP}}^*(G)\leq(s+t)/t$
\end{lem}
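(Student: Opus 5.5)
Argue by induction on $\lambda=\sum_{v\in V(G)}T(v)$, following the template of Lemma~\ref{lem:Monotonicity}. We prove a slightly stronger statement that allows the cover to be larger than required. Fix a complete $(G,S,T)$-legal sequence $(s_1,\dots,s_m)$ with $s_1=\ShSa(G,S,T,u,W)$. Let $\mathcal{H}=(L,H)$ be a cover with $|L(v)|=S(v)+T(v)$. We pick one color $c\in L(u)$ to put into the independent set, delete $c$ and its $H$-neighbors, and pass to a smaller cover for $(G_2,S_2,T_2)$.

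To do this, form $G_2$ and a cover $\mathcal{H}_2$ of it as follows. Remove $c$ from $L(u)$; if $T(u)=1$, delete $u$ entirely. For each $v\in N_G(u)$, delete the neighbor of $c$ in $L(v)$ if there is one. Since $E_H(L(u),L(v))$ is a matching, each list shrinks by at most one. If we choose $c$ so that it has no $H$-neighbor in $L(w)$ for every $w\in W$, then every $w\in W$ keeps $|L(w)|=S(w)+T(w)=S_2(w)+T_2(w)$. Every $v\in N_G(u)\setminus W$ keeps at least $S(v)-1+T(v)=S_2(v)+T_2(v)$ colors, and $u$ itself (if not removed) keeps $S(u)+T(u)-1=S_2(u)+T_2(u)$ colors. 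Legality gives $S_2\ge 0$, so $S_2$ is a shield, and Observation~\ref{obs:Sequence} says $G_2$ is $ST^{(3)}$-$(S_2,T_2)$-degenerate.

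Since some lists may end up larger than $S_2+T_2$, the induction hypothesis should be phrased with $|L(v)|\ge S(v)+T(v)$ and the conclusion $|I\cap L(v)|\ge T(v)$. Truncating each list to exactly $S(v)+T(v)$ vertices and taking the induced subgraph of $H$ is still a cover, so this strengthening is harmless.

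The key step is the choice of $c$. Each $w\in W$ can block at most $|L(w)|=S(w)+T(w)$ colors of $L(u)$, again because the edges form a matching. Legality, $S(u)+T(u)>\sum_{w\in W}(S(w)+T(w))$, therefore leaves some $c\in L(u)$ with no neighbor in $\bigcup_{w\in W}L(w)$. This is exactly where the saving condition is used.

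Applying induction to $\mathcal{H}_2$ (with $\lambda$ reduced by one) gives an independent set $I_2$ with $|I_2\cap L_2(v)|\ge T_2(v)$. Set $I=I_2\cup\{c\}$. It is independent: every $H$-neighbor of $c$ is either in some $L(w)$ with $w\in W$ (there are none) or was deleted. It meets $L(u)$ in $T_2(u)+1=T(u)$ vertices, or in $1=T(u)$ vertex if $u$ was removed, and meets every other $L(v)$ in at least $T(v)$ vertices. The base case $\lambda=0$ is the empty graph.

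For the consequence, take $S\equiv s$ and $T\equiv t$. Then $G$ is $(s+t,t)$-DP-colorable, so $\chi^*_{_{DP}}(G)\le (s+t)/t$. The only delicate point is making sure the deleted neighbors, rather than the vertices in $W$, absorb the shield decrement, and the argument above handles that.
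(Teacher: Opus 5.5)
Your proposal is correct and is essentially the paper's proof. Both arguments induct on $\sum_{v}T(v)$ (the paper phrases it as a minimal counterexample). Both pick a color in the list of the first shaded vertex that has no neighbor in any list of a saved vertex, using legality and the matching property, delete that color's neighbors from the lists of the unsaved neighbors, apply the hypothesis to $(G_2,S_2,T_2)$, and add the color back. The only cosmetic difference is that the paper keeps list sizes exactly $S_2+T_2$ by deleting an arbitrary element $a_v$ of $L(v)$ when the chosen color has no neighbor there, whereas you allow larger lists and truncate, which works equally well.
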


\begin{proof}
    For the sake of contradiction, suppose the lemma is false.  Let $\mathcal{C}$ be the set of all tuples $(G,S,T)$ that satisfy the hypothesis of the lemma but don't satisfy the conclusion.  For each $(G,S,T)\in\mathcal{C}$ let $\lambda(G,S,T)=\sum_{v\in V(G)}T(v)$.

    Now, among the elements of $\mathcal{C}$ suppose $(G,S,T)$ is an element so that $\lambda(G,S,T)$ is as small as possible.  Let $m=\lambda(G,S,T)$.  Clearly, $m>1$ since a copy of $K_1$ is $ST^{(3)}$-$(S,T)$-degenerate for any $S$ and $T$.

    Suppose $\mathcal{S}=(s_1,\ldots,s_m)$ is a complete $(G,S,T)$-legal sequence.  By Observation~\ref{obs:Sequence} we know that $G_2$ is $ST^{(3)}$-$(S_2,T_2)$-degenerate.  Also by Observation~\ref{obs:Sequence} we know that $\lambda(G_2,S_2,T_2)=m-1$; so, we know $(G_2,S_2,T_2)\not\in\mathcal{C}$ and $G_2$ is $(S_2+T_2,T_2)$-DP-colorable.

    Since $(G,S,T)\in\mathcal{C}$ there is a cover of $G$, $\mathcal{H}=(L,H)$, satisfying $|L(v)|=S(v)+T(v)$ for all $v\in V(G)$ such that there is no independent set of vertices $J \subseteq V(H)$ satisfying $|J\cap L(v)|=T(v)$ for all $v\in V(G)$.  Since $s_1=\ShSa(v_1,W_1)$ is legal, we know $S(v_1)+T(v_1)>\sum_{w\in W_1}(S(w)+T(w))$ which means $|L(v_1)|>|\bigcup_{w\in W_1}L(w)|$.  So, there is a $u \in L(v_1)$ such that $u$ has no neighbors (in $H$) in $\bigcup_{w\in W_1}L(w)$.  For each $v\in(N_G(v_1)\setminus W_1)$, let $a_v$ be the neighbor of $u$ in $L(v)$ or an arbitrary element of $L(v)$ if $u$ has no neighbor in $L(v)$.  Let $A=\{a_v : v\in N_G(v_1)\setminus W_1\}$.  Let $L_2:V(G_2)\to 2^{V(H)}$ be the function given by
    \begin{equation*}
        L_2(v)=\left\{\begin{array}{l l}
            L(v)\setminus\{u\}&\text{if }v=v_1\\
            L(v)\setminus\{a_v\}&\text{if }v\in(N_G(v_1)\setminus W_1)\\
            L(v)&\text{otherwise}.
        \end{array}\right.
    \end{equation*}
    Note that $L_2$ is defined over the vertices in $V(G_2)$ and it is possible that $v_1\not\in V(G_2)$.  Next, we let $H_2=H[\bigcup_{v\in V(G_2)}L_2(v)]$ and $\mathcal{H}_2=(L_2,H_2)$.  Clearly $\mathcal{H}_2$ is a cover of $G_2$ such that $|L_2(v)|=S_2(v)+T_2(v)$ for all $v\in V(G_2)$.  Since $G_2$ is $(S_2+T_2,T_2)$-DP-colorable, we know there exists an independent set of vertices $I_2\subseteq V(H_2)$ such that $|I_2\cap L(v)|=T_2(v)$ for all $v\in V(G_2)$.  Since all of the neighbors of $u$ in $H$ have been removed when constructing $H_2$, we know that $I=I_2\cup\{u\}$ is an independent set of vertices in $H$ satisfying $|I\cap L(v)|=T(v)$ for all $v\in V(G)$, a contradiction.
\end{proof}

The subsequent theorem follows immediately.

\begin{customthm} {\bf\ref{thm: upbound}}
For any graph $G$, $\chi^*_{_{DP}}(G)\leq ST^{(3)*}(G)\leq ST^{(4)*}(G)$.
\end{customthm}
\begin{proof}
    The second inequality follows immediately from the definitions.   Lemma~\ref{lem:STDP} says that for each $s\in\{0,1,2,\ldots\}$ and $t\in\N$ if $G$ is $ST^{(3)}$-$(s,t)$-degenerate, then $G$ is $(s+t,t)$-DP-colorable.  Consequently, $\{(s+t)/t:G\text{ is $ST^{(3)}$-$(s,t)$-degenerate}\} \subseteq \{a/b:G\text{ is $(a,b)$-DP-colorable}\}$.  Thus
    \begin{align*}
        \chi^*_{_{DP}}(G)&=\inf\{a/b:G\text{ is $(a,b)$-DP-colorable}\}\\
        &\leq \inf\{(s+t)/t:G\text{ is $ST^{(3)}$-$(s,t)$-degenerate}\} = ST^{(3)*}(G).
    \end{align*}
\end{proof}

Next, we present a lemma, which is a useful tool, that will be used when we generalize Theorems~\ref{thm:c2r} and~\ref{thm:c2r+1} to unicyclic graphs (Corollary~\ref{cor:unicyclic} below).

\begin{lem}\label{lem:appendingVertex}
    Suppose $G$ is a nonempty graph and $S$ and $T$ are a shield and target of $G$.  Also, suppose $G'$ is a graph such that $V(G')=V(G)\cup \{z\}$ for some $z \not\in V(G)$ and $G'[V(G)]=G$.  Finally, suppose $S'$ and $T'$ are a shield and target of $G'$ satisfying $S'(v)=S(v)$ and $T'(v)=T(v)$ for all $v\in V(G)$ while $S'(z)\geq\sum_{v\in N_{G'}(z)}T(v)$.  The following two statements hold.
    \begin{itemize}
        \item If $G$ is $ST^{(3)}$-$(S,T)$-degenerate then $G'$ is $ST^{(3)}$-$(S',T')$-degenerate.
        \item If $G$ is $ST^{(4)}$-$(S,T)$-degenerate then $G'$ is $ST^{(4)}$-$(S',T')$-degenerate.
    \end{itemize}
\end{lem}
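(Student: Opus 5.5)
The plan is to take a complete $(G,S,T)$-legal sequence $\mathcal{S}=(s_1,\ldots,s_m)$, with $s_i=\ShSa(G_i,S_i,T_i,u_i,W_i)$, and lift it to $G'$ by running the same operations first and then shading $z$ repeatedly at the end. Concretely, for each $i\in[m]$ define $s'_i=\ShSa(G'_i,S'_i,T'_i,u_i,W_i)$, where $G'_1=G'$, $S'_1=S'$, $T'_1=T'$. Then append $T'(z)$ operations $\Sh(z)$. If $\mathcal{S}$ is restricted, the lifted sequence is also restricted: the saved sets are unchanged, and the final shade operations have empty $W$. So one argument settles both bullet points.

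First I will show by induction on $i\in[m+1]$ that $G'_i = G'[V(G_i)\cup\{z\}]$, that $T'_i$ agrees with $T_i$ on $V(G_i)$ with $T'_i(z)=T'(z)$, and that $S'_i$ agrees with $S_i$ on $V(G_i)$. The induction works because each $u_i\in V(G_i)$, and each $W_i\subseteq N_{G_i}(u_i)\subseteq N_{G'_i}(u_i)$. An operation on $u_i$ decrements $S$ on $N_{G'_i}(u_i)\setminus W_i$. Restricted to $V(G_i)$, this is exactly the decrement done in $G$. The only extra effect is on $z$, when $u_i$ is adjacent to $z$.

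Legality of $s'_i$ follows from this. The inequality $S'_i(u_i)+T'_i(u_i)>\sum_{w\in W_i}(S'_i(w)+T'_i(w))$ involves only vertices of $G_i$, so it is inherited from $s_i$. Nonnegativity of the output shield holds on $V(G_{i+1})$ because $s_i$ is legal.

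The main point is nonnegativity at $z$. Over the first $m$ operations, $S'(z)$ is decremented once for each operation $s_i$ with $u_i\in N_{G'}(z)$. Each vertex $v$ is shaded exactly $T(v)$ times in a complete sequence. Hence the total decrement is at most $\sum_{v\in N_{G'}(z)}T(v)\le S'(z)$, and every intermediate value stays nonnegative. Note that $z\notin W_i$ always, since $z\notin V(G)$, so these decrements are not offset by saving. However, they are bounded regardless, which is all we need.

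After $s'_m$, the graph $G'_{m+1}$ is the single vertex $z$ with target $T'(z)$. By Observation~\ref{obs:Trivial}, or just by repeatedly shading $z$, this remaining graph is degenerate. Observation~\ref{obs:ExtendingToComplete} then gives a complete (restricted) extension. The only subtlety I expect is the bookkeeping in the induction, in particular checking that $G'_i$ really is $G_i$ plus $z$ at every step.
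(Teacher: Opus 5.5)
Your proposal is correct and follows essentially the same route as the paper's proof. The paper also lifts the complete (restricted) sequence verbatim to $G'$ and proves by induction that $G'_{i+1}=G'[V(G_{i+1})\cup\{z\}]$ with shields and targets agreeing off $z$. It tracks the value at $z$ with a counting function $f(i)\le\sum_{v\in N_{G'}(z)}T(v)\le S'(z)$, exactly as you do, and finishes with Observations~\ref{obs:Trivial} and~\ref{obs:ExtendingToComplete}; the lifted sequence is nonempty because $G$ is nonempty, as that observation requires.
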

Note that in the statement of Lemma~\ref{lem:appendingVertex}, aside from positivity which comes from the definition of target, no additional conditions are required for $T'(z)$.
\begin{proof}
    If $G$ is $ST^{(3)}$-$(S,T)$-degenerate, then we know there exists a complete $(G,S,T)$-legal sequence $\mathcal{S}=(s_1,\ldots,s_k)$ (in the case where $G$ is also $ST^{(4)}$-$(S,T)$-degenerate suppose that we select $\mathcal{S}$ to be a complete, restricted $(G,S,T)$-legal sequence).  We wish to inductively construct a $k$-term $(G',S',T')$-legal sequence.  Suppose $\mathcal{S}'$ is our inductively constructed sequence with at most $k$ terms where the $i^{th}$ term is $s'_i=\ShSa(v_i,W_i)$ (where $v_i$ and $W_i$ are the fourth and fifth inputs of $s_i$).

    Let $f: [k] \rightarrow (\N \cup \{0\})$ be the function that maps each $i \in [k]$ to the number of terms among $s_1, \ldots, s_i$ that have an element of $N_{G'}(z)$ as its fourth input.  Notice $f(i) \leq \sum_{v\in N_{G'}(z)}T(v)$ for each $i \in [k]$ and $f(k) = \sum_{v\in N_{G'}(z)}T(v)$.  Now, the following statements can easily be proven by induction on $i$ for each $i \in [0:k]$
     \begin{enumerate}[(1)]
        \item $G'_{i+1} = G'[V(G_{i+1})\cup\{z\}]$,
        \item $S'_{i+1}(v) = S_{i+1}(v)$ for all $v \in V(G_{i+1})$, $S'_{i+1}(z)=S'(z)-f(i)$, 
        
        $T'_{i+1}(v)=T_{i+1}(v)$ for all $v\in V(G_{i+1})$, and $T'_{i+1}(z)=T'(z)$, and
        \item  if $i \geq 1$, then $(s'_1, \dots, s'_i)$ is a $(G',S',T')$-legal sequence.  Furthermore, it is a restricted $(G',S',T')$-legal sequence in the case $(s_1,\ldots,s_k)$ is a restricted $(G,S,T)$-legal sequence.
    \end{enumerate}
    
    Consequently, $\mathcal{S}'$ is a $(G',S',T')$-legal sequence with $k$ terms (in the case where $G$ is also $ST^{(4)}$-$(S,T)$-degenerate, $\mathcal{S}'$ is a restricted $(G',S',T')$-legal sequence with $k$ terms) and $G'_{\mathcal{S}'}$ is the graph with vertex set $\{z\}$.  This means $G'_{\mathcal{S}'}$ is $ST^{(4)}$-$(S'_{\mathcal{S}'}, T'_{\mathcal{S}'})$-degenerate by Observation~\ref{obs:Trivial}.  Therefore, by Observation~\ref{obs:ExtendingToComplete}, $G'$ is $ST^{(3)}$-$(S',T')$-degenerate if $G$ is $ST^{(3)}$-$(S,T)$-degenerate, and $G'$ is $ST^{(4)}$-$(S',T')$-degenerate if $G$ is $ST^{(4)}$-$(S,T)$-degenerate.
\end{proof}

Finally, we obtain the following corollary of Lemma~\ref{lem:appendingVertex}, which allows us to focus only on the $k$-core of a graph in certain situations.

\begin{cor}\label{cor:k-core}
    For $k\in\N$, suppose $G$ is a graph, the $k$-core of $G$ is $ST^{(j)}$-$(s,t)$-degenerate, $j\in\{3,4\}$, and $s\geq (k-1)t$.  Then, $G$ is $ST^{(j)}$-$(s,t)$-degenerate.
\end{cor}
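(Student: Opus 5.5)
The plan is to rebuild $G$ from its $k$-core by adding back, in reverse order, the vertices that were deleted when the core was formed, and to apply Lemma~\ref{lem:appendingVertex} once for each added vertex. Throughout, $S$ and $T$ denote the constant shield $s$ and constant target $t$ on whatever graph is under consideration.

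First fix the peeling order. By definition, the $k$-core $C$ of $G$ is obtained by successively deleting vertices of degree less than $k$. Let $z_1,\ldots,z_m$ be the deleted vertices, listed in the order of deletion. For each $i$, the vertex $z_i$ has fewer than $k$ neighbors in $G-\{z_1,\ldots,z_{i-1}\}$, so at most $k-1$. Reversing this, set $G^{(m)}=C$ and $G^{(i-1)}=G[V(G^{(i)})\cup\{z_i\}]$ for $i=m,\ldots,1$. Then $G^{(0)}=G$, and at each step $z_i$ has at most $k-1$ neighbors in $G^{(i)}$.

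Next, argue by induction from $i=m$ down to $i=0$ that $G^{(i)}$ is $ST^{(j)}$-$(s,t)$-degenerate. The base case $i=m$ is the hypothesis on the $k$-core. For the inductive step, apply Lemma~\ref{lem:appendingVertex} with $G^{(i)}$ in the role of $G$, $G^{(i-1)}$ in the role of $G'$, $z=z_i$, and constant shield $s$ and target $t$ on both graphs. The condition the lemma requires is
\[
S'(z_i)=s\geq (k-1)t\geq |N_{G^{(i-1)}}(z_i)|\,t=\sum_{v\in N_{G^{(i-1)}}(z_i)}T(v),
\]
which follows from the assumption $s\geq(k-1)t$. The lemma therefore gives the conclusion for $G^{(i-1)}$ for either $j$.

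The one delicate point is that Lemma~\ref{lem:appendingVertex} assumes its smaller graph is nonempty, and the $k$-core may be empty. In that case, Observation~\ref{obs:Trivial} shows directly that the single-vertex graph $G^{(m-1)}$ is $ST^{(j)}$-$(s,t)$-degenerate, and the induction proceeds from there. If $G$ is itself empty there is nothing to prove.
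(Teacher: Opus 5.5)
Your proposal is correct and follows essentially the same route as the paper: peel $G$ down to its $k$-core, then re-add the deleted vertices in reverse order, invoking Lemma~\ref{lem:appendingVertex} at each step with the bound $s\geq (k-1)t\geq |N(z_i)|\,t$. Your separate treatment of an empty $k$-core via Observation~\ref{obs:Trivial} is a small extra bit of care that the paper's proof omits, since Lemma~\ref{lem:appendingVertex} is stated only for nonempty $G$.
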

\begin{proof}
    Suppose $H$ is the $k$-core of $G$.  If $H=G$ the result trivially holds.  So we may suppose $H$ is a proper subgraph of $G$.  Since the $k$-core of $G$ can be obtained by successively deleting vertices of degree less than $k$, there is some $m\in\N$ and a sequence of graphs $G_0, \ldots, G_m$ such that $G_0=G$, $G_m =H$, and for each $i \in [0:m-1]$ there is a vertex $w_i$ such that $d_{G_i}(w_i) < k$ and $G_i - w_i = G_{i+1}$.   We will show that $G_{m-\mu}$ is $ST^{(j)}$-$(s,t)$-degenerate for all $\mu\in[0:m]$ by induction on $\mu$.

    The base case is clear.  So, suppose $\mu \in [m]$ and the desired result holds for all nonnegative integers less than $\mu$.  By the induction hypothesis $G_{m-\mu+1}$, is $ST^{(j)}$-$(s,t)$-degenerate. Let $S$ and $T$ be the shield and target of $G_{m-\mu}$ that are identically $s$ and identically $t$ respectively.  Since $S(w_{m-\mu}) = s \geq (k-1)t \geq |N_{G_{m-\mu}}(w_{m-\mu})|t$, Lemma~\ref{lem:appendingVertex} tells us that $G_{m-\mu}$ is $ST^{(j)}$-$(S,T)$-degenerate which completes the induction step.  Therefore, $G_{m-\mu}$ is $ST^{(j)}$-$(s,t)$-degenerate for all $\mu\in[0:m]$.  Consequently $G$ is $ST^{(j)}$-$(s,t)$-degenerate (since $G_0=G$).
\end{proof}

\section{Upper bounds}\label{sec:upperBounds}

In this section, we prove upper bounds on the fractional strict type-$4$ degeneracy of certain graphs.  Each of these results will then provide an upper bound on fractional strict type-$3$ degeneracy of the graph of interest as well as its fractional DP-chromatic number by Theorem~\ref{thm: upbound}.  We begin with a remark that gives a framework for proving that a particular sequence is a $(G,S,T)$-legal sequence with specific outputs. 

\begin{rem}\label{rem:Rigor}
    Throughout this section we often inductively construct a restricted $(G,S,T)$-legal sequence $\mathcal{S}$ where the $i^{th}$ term is of the form $s_i=\ShSa(G_i,S_i,T_i,v_i,W_i)$ in such a way that the constructed sequence has at most $n$ terms (see Remark~\ref{rem:inductiveDefinition}).  We then wish to prove that $\mathcal{S}$ has $n$ terms and is a restricted $(G,S,T)$-legal sequence.  Finally, we explicitly describe $G_\mathcal{S}$, $S_\mathcal{S}$, and $T_\mathcal{S}$, and wish to prove that these descriptions are correct.  
    
    A natural framework for proving that $\mathcal{S}$ has $n$ terms and that the outputs of $\mathcal{S}$ are correctly described is as follows.  One can prove statements about the inputs and outputs of $s_i$ by induction on $i$ that yield the following for each $i \in [0:n]$:
    \begin{enumerate}[(1)]
        \item an explicit description of the graph $G_{i+1}$,
        \item explicit formulas for $S_{i+1}$ and $T_{i+1}$, and
        \item  if $i \geq 1$, then $(s_1, \dots, s_i)$ is a restricted $(G,S,T)$-legal sequence.
    \end{enumerate}
    Explicitly utilizing this framework above often leads to unwieldy formulas and cumbersome notational details that obscure the underlying simplicity of the defined sequence.  We demonstrate this by using this framework to give all the details of the proof of Theorem~\ref{thm:c2r} in Appendix~\ref{sec:appendix}.  Subsequently, for ease of exposition, we omit this level of detail for the remaining proofs.  
\end{rem}

\subsection{Cycles}\label{subsec:cycles}

In this subsection we prove Corollary~\ref{cor:unicyclic} which states that the fractional strict type-4 degeneracy of a unicyclic graph equals its fractional chromatic number.  We begin by determining the fractional strict type-4 degeneracy of cycles. 

\begin{customthm}{\bf\ref{thm:c2r}}
	For all $r\in\N$ satisfying $r\geq2$, $ST^{(4)*}(C_{2r})=2$.  Consequently $ST^{(4)*}(C_{2r})=ST^{(3)*}(C_{2r})=\chi_{_{DP}}^*(C_{2r})=\chi_{\ell}^*(C_{2r})=\chi^*(C_{2r})=2$.
\end{customthm}

    In the proof below, we construct a restricted $(G,S,T)$-legal sequence where $G=C_{2r}$.  To aid the reader, Figure~\ref{fig:C6a} depicts an example of one such sequence when $G=C_6$.

\begin{proof}
    We have $ST^{(4)*}(C_{2r})\geq \chi^*(C_{2r})=2$.  To prove our result, we show $ST^{(4)*}(C_{2r}) \leq 2$ by showing that $C_{2r}$ is $ST^{(4)}$-$(t+1,t)$-degenerate for all $t\in\N$.  Suppose $G=C_{2r}$, and the vertices of $G$ in cyclic order are $v_1,\ldots,v_{2r}$.  Fix $t\in\N$.  For $j\in[t]$ let $S^{(j)}$ and $T^{(j)}$ be the shield and target of $G$ defined by
    \begin{equation*}
        S^{(j)}(v_i)=\left\{\begin{array}{l l}
            j&\text{if }i=1\\
            t+1&\text{if }i\text{ is even}\\
            j+1&\text{otherwise},
        \end{array}\right.
        \hspace{0.25in}\text{and}\hspace{0.25in}
        T^{(j)}(v_i)=\left\{\begin{array}{l l}
            t&\text{if }i\text{ is odd}\\
            j&\text{if }i\text{ is even}.
        \end{array}\right.
    \end{equation*}
    
    We claim that $G$ is $ST^{(4)}$-$(S^{(j)},T^{(j)})$-degenerate for all $j\in[t]$.  Assume for contradiction that this claim does not hold. Then, suppose $\mu\in[t]$ is the smallest element of $[t]$ such that $G$ is not $ST^{(4)}$-$(S^{(\mu)},T^{(\mu)})$-degenerate.  We wish to inductively construct an $r$-term restricted $(G,S^{(\mu)},T^{(\mu)})$-legal sequence.  Suppose $\mathcal{S}^{(\mu)}$ is our inductively constructed sequence of at most $r$ terms where the $i^{th}$ term is $s^{(\mu)}_i=\ShSa(v_{2i},\{v_{2i-1}\})$.

    Assume $\mu = 1$.  We show in Appendix~\ref{sec:appendix} Claim~\ref{cl:C2r1} that $\mathcal{S}^{(1)}$ contains $r$ terms (consequently $\mathcal{S}^{(1)}=(s_1^{(1)},\ldots,s_r^{(1)})$), $S^{(1)}$ is a restricted $(G,S^{(1)},T^{(1)}$-legal sequence, and $G_{\mathcal{S}^{(1)}}$ is an edgeless graph by following the framework described in Remark~\ref{rem:Rigor}.  This means $G$ is $ST^{(4)}$-$(S^{(1)},T^{(1)})$-degenerate by Observation~\ref{obs:Trivial}, which is a contradiction.  
    
    Now, assume $\mu>1$. This means that $t>1$.  We show in Appendix~\ref{sec:appendix} Claim~\ref{cl:C2r2}, following the framework described in Remark~\ref{rem:Rigor}, that $\mathcal{S}^{(\mu)}$ contains $r$ terms, $\mathcal{S}^{(\mu)}$ is a restricted $(G,S^{(\mu)},T^{(\mu)})$-legal sequence, $G_{\mathcal{S}^{(\mu)}}=C_{2r}$, 
    \begin{equation*}
        S_{\mathcal{S}^{(\mu)}}^{(\mu)}(v_i)=\left\{\begin{array}{l l}
            j-1&\text{if }i=1\\
            t+1&\text{if }i\text{ is even}\\
            j&\text{otherwise},
        \end{array}\right.
        \hspace{0.25in}\text{and}\hspace{0.25in}
        T_{\mathcal{S}^{(\mu)}}^{(\mu)}(v_i)=\left\{\begin{array}{l l}
            t&\text{if }i\text{ is odd}\\
            j-1&\text{if }i\text{ is even}.
        \end{array}\right.
    \end{equation*}
    Note that this means $S_{\mathcal{S}^{(\mu)}}^{(\mu)}=S^{(\mu-1)}$ and $T_{\mathcal{S}^{(\mu)}}^{(\mu)}=T^{(\mu-1)}$.  Due to the minimality of $\mu$ we know that $G$ is $ST^{(4)}$-$(S^{(\mu-1)},T^{(\mu-1)})$-degenerate.  So, by Observation~\ref{obs:ExtendingToComplete}, $G$ is $ST^{(4)}$-$(S^{(\mu)},T^{(\mu)})$-degenerate, which is a contradiction.

    Thus, for any $t\in\N$ and any $j\in[t]$, $G$ is $ST^{(4)}$-$(S^{(j)},T^{(j)})$-degenerate.  It follows that $G$ is $ST^{(4)}$-$(t+1,t)$-degenerate by Lemma~\ref{lem:Monotonicity} (since $S^{(t)}(v)\leq t+1$ and $T^{(t)}(v)=t$ for all $v\in V(G)$).
\end{proof}

\begin{figure}[h]
    \centering
    
    \begin{enumerate}[(1)]\centering
        
    \begin{multicols}{2}
    \item ~\hspace{1.5in}~
    \vspace{-9mm}
    
    \begin{tikzpicture}[scale=0.8,every node/.style={circle, draw, fill, inner sep=0pt, minimum size=4pt, text=black}]
			\node [anchor=center, label={[shift={(0,0.45)}]below:$v_1\hspace{-1mm}:\hspace{-1mm}(2,2)$}, color=black] (0) at (0,0) {};
			\node [anchor=center, label={[shift={(0.1,-0.45)}]above:$v_2\hspace{-1mm}:\hspace{-1mm}(3,2)$}, color=black] (1) at (1.25,0) {};
			\node [anchor=center, label={[shift={(0,0.45)}]below:$v_3\hspace{-1mm}:\hspace{-1mm}(3,2)$}, color=black] (2) at (2.5,0) {};
			\node [anchor=center, label={[shift={(0.1,-0.45)}]above:$v_4\hspace{-1mm}:\hspace{-1mm}(3,2)$}, color=black] (3) at (3.75,0) {};
			\node [anchor=center, label={[shift={(0,0.45)}]below:$v_5\hspace{-1mm}:\hspace{-1mm}(3,2)$}, color=black] (4) at (5,0) {};
			\node [anchor=center, label={[shift={(0.35,-0.45)}]above:$v_6\hspace{-1mm}:\hspace{-1mm}(3,2)$}, color=black] (5) at (6.25,0) {};

			\draw (0.center) to (1.center);
			\draw (1.center) to (2.center);
			\draw (2.center) to (3.center);
			\draw (3.center) to (4.center);
			\draw (4.center) to (5.center);
			\draw [in=45, out=135](5.center) to (0.center);
		\end{tikzpicture}
        \vspace{-5mm}
        
        \item ~\hspace{1.375in}~
    \vspace{-9mm}
    
    \begin{tikzpicture}[scale=0.8,every node/.style={circle, draw, fill, inner sep=0pt, minimum size=4pt, text=black}]
			\node [anchor=center, label={[shift={(0,0.45)}]below:$v_1\hspace{-1mm}:\hspace{-1mm}(2,2)$}, color=black] (0) at (0,0) {};
			\node [anchor=center, label={[shift={(0.1,-0.45)}]above:$v_2\hspace{-1mm}:\hspace{-1mm}(3,1)$}, color=black] (1) at (1.25,0) {};
			\node [anchor=center, label={[shift={(0,0.45)}]below:$v_3\hspace{-1mm}:\hspace{-1mm}(2,2)$}, color=black] (2) at (2.5,0) {};
			\node [anchor=center, label={[shift={(0.1,-0.45)}]above:$v_4\hspace{-1mm}:\hspace{-1mm}(3,2)$}, color=black] (3) at (3.75,0) {};
			\node [anchor=center, label={[shift={(0,0.45)}]below:$v_5\hspace{-1mm}:\hspace{-1mm}(3,2)$}, color=black] (4) at (5,0) {};
			\node [anchor=center, label={[shift={(0.35,-0.45)}]above:$v_6\hspace{-1mm}:\hspace{-1mm}(3,2)$}, color=black] (5) at (6.25,0) {};

			\draw (0.center) to (1.center);
			\draw (1.center) to (2.center);
			\draw (2.center) to (3.center);
			\draw (3.center) to (4.center);
			\draw (4.center) to (5.center);
			\draw [in=45, out=135](5.center) to (0.center);
		\end{tikzpicture}
    \end{multicols}
    \vspace{-10mm}
    
    \begin{multicols}{2}
    
        \item ~\hspace{1.375in}~
    \vspace{-9mm}
    
    \begin{tikzpicture}[scale=0.8,every node/.style={circle, draw, fill, inner sep=0pt, minimum size=4pt, text=black}]
			\node [anchor=center, label={[shift={(0,0.45)}]below:$v_1\hspace{-1mm}:\hspace{-1mm}(2,2)$}, color=black] (0) at (0,0) {};
			\node [anchor=center, label={[shift={(0.1,-0.45)}]above:$v_2\hspace{-1mm}:\hspace{-1mm}(3,1)$}, color=black] (1) at (1.25,0) {};
			\node [anchor=center, label={[shift={(0,0.45)}]below:$v_3\hspace{-1mm}:\hspace{-1mm}(2,2)$}, color=black] (2) at (2.5,0) {};
			\node [anchor=center, label={[shift={(0.1,-0.45)}]above:$v_4\hspace{-1mm}:\hspace{-1mm}(3,1)$}, color=black] (3) at (3.75,0) {};
			\node [anchor=center, label={[shift={(0,0.45)}]below:$v_5\hspace{-1mm}:\hspace{-1mm}(2,2)$}, color=black] (4) at (5,0) {};
			\node [anchor=center, label={[shift={(0.35,-0.45)}]above:$v_6\hspace{-1mm}:\hspace{-1mm}(3,2)$}, color=black] (5) at (6.25,0) {};

			\draw (0.center) to (1.center);
			\draw (1.center) to (2.center);
			\draw (2.center) to (3.center);
			\draw (3.center) to (4.center);
			\draw (4.center) to (5.center);
			\draw [in=45, out=135](5.center) to (0.center);
		\end{tikzpicture}
        \item ~\hspace{1.375in}~
    \vspace{-9mm}
    
    \begin{tikzpicture}[scale=0.8,every node/.style={circle, draw, fill, inner sep=0pt, minimum size=4pt, text=black}]
			\node [anchor=center, label={[shift={(0,0.45)}]below:$v_1\hspace{-1mm}:\hspace{-1mm}(1,2)$}, color=black] (0) at (0,0) {};
			\node [anchor=center, label={[shift={(0.1,-0.45)}]above:$v_2\hspace{-1mm}:\hspace{-1mm}(3,1)$}, color=black] (1) at (1.25,0) {};
			\node [anchor=center, label={[shift={(0,0.45)}]below:$v_3\hspace{-1mm}:\hspace{-1mm}(2,2)$}, color=black] (2) at (2.5,0) {};
			\node [anchor=center, label={[shift={(0.1,-0.45)}]above:$v_4\hspace{-1mm}:\hspace{-1mm}(3,1)$}, color=black] (3) at (3.75,0) {};
			\node [anchor=center, label={[shift={(0,0.45)}]below:$v_5\hspace{-1mm}:\hspace{-1mm}(2,2)$}, color=black] (4) at (5,0) {};
			\node [anchor=center, label={[shift={(0.35,-0.45)}]above:$v_6\hspace{-1mm}:\hspace{-1mm}(3,1)$}, color=black] (5) at (6.25,0) {};

			\draw (0.center) to (1.center);
			\draw (1.center) to (2.center);
			\draw (2.center) to (3.center);
			\draw (3.center) to (4.center);
			\draw (4.center) to (5.center);
			\draw [in=45, out=135](5.center) to (0.center);
		\end{tikzpicture}
    \end{multicols}
    \vspace{-10mm}
    
        \end{enumerate}
    \vspace{-5mm}
    
    \caption{Each (i) is a pictorial illustration of $G^{(2)}_i$ along with the ordered pairs $(S^{(2)}_i,T^{(2)}_i)$.  Notice that (2), (3), (4) correspond to outputs of the operations of $\mathcal{S}^{(2)}$ when $G=C_6$ and $t=2$ as described in Theorem~\ref{thm:c2r}.}
    \label{fig:C6a}
\end{figure}

We now present a tool that will be of great importance in the proofs of Theorems~\ref{thm:c2r+1} and~\ref{thm:subdivisions}.

\begin{lem}\label{lem:path}
    For $r\in\N$, suppose $G=P_{2r-1}$ and the vertices of $G$ in order are $v_1,\ldots,v_{2r-1}$. Suppose $S$ is the shield of $G$ defined by $S(v_1)=S(v_{2r-1})=1$ and $S(v_i)=r+1$ for all $i\in[2:2r-2]$ and $T$ is the target of $G$ defined by $T(v_i)=r$ for all $i\in[2r-1]$.  Then $G$ is $ST^{(4)}$-$(S,T)$-degenerate.
\end{lem}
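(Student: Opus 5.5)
The plan is to exhibit an explicit complete, restricted $(G,S,T)$-legal sequence. I organize it in ``rounds'' in the spirit of the proof of Theorem~\ref{thm:c2r}, and use the framework of Remark~\ref{rem:Rigor} to verify legality. The main tool is that an interior vertex $v_i$, $i\in[2:2r-2]$, has $S(v_i)+T(v_i)=2r+1$, while an endpoint has $S+T=r+1$. So an interior vertex may legally save on an endpoint neighbor, and more generally on any neighbor whose $S+T$ has dropped below its own. The endpoints, by contrast, have shield only $1$. They can absorb at most one unsaved shade from $v_2$ (respectively $v_{2r-2}$), so almost every shade of $v_2$ and $v_{2r-2}$ must save on the endpoint.

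First I would work out the small case $r=2$ by hand to see the pattern. With $(S,T)$ equal to $(1,2),(3,2),(1,2)$ along $v_1,v_2,v_3$, the sequence $\Sh(v_1)$, $\Sh(v_3)$, $\ShSa(v_2,\{v_1\})$, $\ShSa(v_2,\{v_3\})$ is legal. The first two operations bring $v_2$ to $(1,2)$ and the endpoints to $(1,1)$. The third is legal since $3>2$, and it brings $v_3$ to $(0,1)$. The fourth is legal since $2>1$, and it removes $v_2$. What remains is edgeless, so Observation~\ref{obs:Trivial} finishes. The pattern is as follows. Each endpoint is shaded once early, which costs its neighbor one unit of shield. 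The interior vertex then spends its remaining shades saving alternately on the side whose $S+T$ is currently small, and it lets the other side pay with the shield unit that the endpoint's own early shading has freed up.

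For general $r$, I would generalize the statement to a family of shield/target pairs indexed by $j\in[r]$, analogous to $S^{(j)},T^{(j)}$ in Theorem~\ref{thm:c2r}. The aim is that one round of shadings reduces every target by one and returns the pair for $j-1$ (possibly up to the monotonicity allowed by Lemma~\ref{lem:Monotonicity}). A round would consist of the following steps:
\begin{itemize}
\item shade $v_1$ and $v_{2r-1}$ once each;
\item sweep inward, where each interior vertex is shaded once while saving on the neighbor on the side already processed (whose $S+T$ is now strictly smaller);
\item at the middle vertex $v_r$, save on one side and let the other side absorb one unit of shield.
\end{itemize}
The two halves of the path should behave symmetrically. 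The middle vertex is where the parity of $2r-1$ matters, and it is why the interior shield is $r+1$ rather than $r$: over the $r$ rounds, each interior vertex must absorb at most $r$ unsaved decrements plus one slack unit. Once the round is described, I would argue by minimal counterexample in $j$ exactly as in Theorem~\ref{thm:c2r}, finishing with Observation~\ref{obs:ExtendingToComplete}. The last round removes all vertices, and Observation~\ref{obs:Trivial} closes the base case.

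The main obstacle is choosing the per-round family so that the endpoint shields, which stay at $1$ or drop to $0$, never go negative. Equivalently, each endpoint may be hit by an unsaved shade of its neighbor at most once over the whole sequence, while the legality inequality $S(u)+T(u)>S(w)+T(w)$ still holds whenever $v_2$ saves on $v_1$. If a uniform round fails, my fallback is to order the whole sequence globally. I would first shade each endpoint down as in the $r=2$ case, then process the interior vertices from both ends toward $v_r$, tracking $S+T$ explicitly to check each save.
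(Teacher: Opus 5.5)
Your $r=2$ computation is correct. For general $r$, though, the proposal never specifies a sequence, and the round you sketch fails exactly at the obstacle you flag. Take $r\ge 3$ and run your first round: shade $v_1$ and $v_{2r-1}$ once, then sweep inward with each vertex saving on its outer neighbor.
\begin{itemize}
\item Every $v_i$ with $i\in[2:r-1]\cup[r+1:2r-2]$ ends up at $(S,T)=(r,r-1)$.
\item The middle vertex $v_r$ has been charged by both $v_{r-1}$ and $v_{r+1}$, so it sits at $(r-1,r)$.
\end{itemize}
Now $v_{r-1}$, $v_r$ and $v_{r+1}$ all have $S+T=2r-1$, so the save at $v_r$ violates the strict inequality required for legality. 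Even if that save were allowed, $v_r$ is charged from both sides in every round. Since every round lowers every target by one, no vertex disappears before round $r$. So $v_r$ absorbs $2r-2>r+1$ units (for $r\ge 4$) before the last round. Your accounting (``at most $r$ unsaved decrements plus one slack unit'' per interior vertex) is therefore not realized. The fallback (shade the endpoints down, then process inward while tracking $S+T$) restates the task rather than giving a construction.

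The missing idea is to never shade an odd-indexed vertex while it still has neighbors; these vertices form an independent set containing both endpoints. Shade only $v_2,v_4,\ldots,v_{2r-2}$, each $r$ times. Then every operation saves on one odd neighbor and charges exactly one unit to the other, and the even vertices are never charged. Within this scheme the counts are forced. Let $a_k$ be the number of times $v_{2k}$ saves on $v_{2k-1}$.
\begin{itemize}
\item The endpoint shields force $r-a_1\le 1$ and $a_{r-1}\le 1$.
\item For $k\in[r-2]$, the shield of $v_{2k+1}$ forces $a_k+(r-a_{k+1})\le r+1$.
\end{itemize}
Hence $a_k=r-k$, and every odd vertex absorbs exactly its shield, with no slack.

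The paper realizes this in two passes. Left to right, it applies $\ShSa(v_{2k},\{v_{2k-1}\})$ exactly $r-k$ times. This is legal because at that point $S(v_{2k-1})=k$ and $T(v_{2k})\ge k+1$, so $r+1+T(v_{2k})>k+r$. The pass leaves $T(v_{2k})=k$. Right to left, it then applies $\ShSa(v_{2k},\{v_{2k+1}\})$ exactly $k$ times. This is legal because $S(v_{2k+1})$ has by then been driven to $0$. The pass removes $v_{2k}$ and brings $S(v_{2k-1})$ down to $0$.

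What remains is edgeless. The paper encodes the intermediate states as the family $(G^{(j)},S^{(j)},T^{(j)})$, $j\in[2r-1]$, and concludes with Observations~\ref{obs:ExtendingToComplete} and~\ref{obs:Trivial}. Pre-shading the endpoints is unnecessary: $v_2$ may save on $v_1$ immediately, since $S(v_2)+T(v_2)=2r+1>r+1$.
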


    Figure~\ref{fig:P5} is an illustration of the proof below when $G=P_5$.

\begin{proof}
    If $r=1$ we see that $G = K_1$ and $S(v_1)=1$.  So, the result holds by Observation~\ref{obs:Trivial} when $r=1$.  Suppose $r\geq2$.  We will show the desired result by proving the existence of a complete, restricted $(G,S,T)$-legal sequence.

    We begin by defining a sequence of graphs, and sequences of shields and targets of those graphs.  For each $j\in[r]$ let $G^{(j)}=G$ and $S^{(j)}$ and $T^{(j)}$ be the shield and target of $G^{(j)}$ given by
    \begin{align*}
        S^{(j)}(v_i)&=\left\{\begin{array}{l l}
            (i+1)/2&\text{if }i=2k-1\text{ for some }k\in[j]\text{ and }i\neq2r-1\\
            0&\text{if }i=2r-1\text{ and }j=r\\
            1&\text{if }i=2r-1\text{ and }j\neq r\\
            r+1&\text{otherwise}
        \end{array}\right.\\
        &\text{and}\\
        T^{(j)}(v_i)&=\left\{\begin{array}{l l}
            i/2&\text{if }i=2k\text{ for some }k\in[j-1]\\
            r&\text{otherwise.}
        \end{array}\right.
    \end{align*}
    For each $j\in[r+1:2r-1]$ let $G^{(j)}=G-\{v_{2k}\}_{k\in[2r-j:r-1]}$ and $S^{(j)}$ and $T^{(j)}$ be the shield and target of $G^{(j)}$ given by
    \begin{align*}
        S^{(j)}(v_i)&=\left\{\begin{array}{l l}
            (i+1)/2&\text{if }i=2k-1\text{ for some }k\in[2r-j]\\
            0&\text{if }i=2k-1\text{ for some }k\in[2r+1-j:r]\\
            r+1&\text{otherwise}
        \end{array}\right.\\
        &\text{and}\\
        T^{(j)}(v_i)&=\left\{\begin{array}{l l}
            i/2&\text{if }i=2k\text{ for some }k\in[2r-j-1]\\
            r&\text{otherwise.}
        \end{array}\right.
    \end{align*}

    We claim that $G^{(j)}$ is $ST^{(4)}$-$(S^{(j)},T^{(j)})$-degenerate for each $j\in[2r-1]$.  This will imply the desired result since $j=1$ yields $G$ is $ST^{(4)}$-$(S,T)$-degenerate.  
    
    Assume, for contradiction, that this claim does not hold.  Then, suppose $\mu\in[2r-1]$ is the largest element of $[2r-1]$ such that $G^{(\mu)}$ is not $ST^{(4)}$-$(S^{(\mu)},T^{(\mu)})$-degenerate.  First, we obtain a contradiction in the case $\mu=2r-1$.  Note $G^{(2r-1)}=G-\{v_{2k}\}_{k\in[r-1]}$ is an edgeless graph.  Hence $G^{(2r-1)}$ is $ST^{(4)}$-$(S^{(2r-1)},T^{(2r-1)})$-degenerate by Observation~\ref{obs:Trivial}, which is a contradiction.

    Next, we obtain a contradiction when $\mu\in [r:2r-2]$.  We wish to inductively construct a $(2r-\mu-1)$-term $(G^{(\mu)},S^{(\mu)},T^{(\mu)})$-legal sequence.  Suppose $\mathcal{S}^{(\mu)}$ is our inductively constructed sequence of at most $(2r-\mu-1)$ terms where the $i^{th}$ term is $s_i^{(\mu)}=\ShSa(v_{2(2r-\mu)-2},\{v_{2(2r-\mu)-1}\})$.  Following the framework described in Remark~\ref{rem:Rigor}, it can be shown that $\mathcal{S}^{(\mu)}$ contains $(2r-\mu-1)$ terms, $\mathcal{S}^{(\mu)}$ is a restricted $(G^{(\mu)},S^{(\mu)},T^{(\mu)})$-legal sequence, $G^{(\mu)}_{\mathcal{S}^{(\mu)}}=G^{(\mu+1)}$, $S^{(\mu)}_{\mathcal{S}^{(\mu)}}=S^{(\mu+1)}$, and $T^{(\mu)}_{\mathcal{S}^{(\mu)}}=T^{(\mu+1)}$.  Due to the maximality of $\mu$ we know that $G^{(\mu+1)}$ is $ST^{(4)}$-$(S^{(\mu+1)},T^{(\mu+1)})$-degenerate.  So, by Observation~\ref{obs:ExtendingToComplete}, $G^{(\mu)}$ is $ST^{(4)}$-$(S^{(\mu)},T^{(\mu)})$-degenerate, which is a contradiction.

    Finally, we obtain a contradiction when $\mu\in [r-1]$.  We wish to inductively construct a $(r-\mu)$-term $(G^{(\mu)},S^{(\mu)},T^{(\mu)})$-legal sequence.  Suppose $\mathcal{S}^{(\mu)}$ is our inductively constructed sequence of at most $(r-\mu)$ terms where the $i^{th}$ term is $s_i^{(\mu)}=\ShSa(v_{2\mu},\{v_{2\mu-1}\})$.  Following the framework described in Remark~\ref{rem:Rigor}, it can be shown that $\mathcal{S}^{(\mu)}$ contains $(r-\mu)$ terms, $\mathcal{S}^{(\mu)}$ is a restricted $(G^{(\mu)},S^{(\mu)},T^{(\mu)})$-legal sequence, $G^{(\mu)}_{\mathcal{S}^{(\mu)}}=G^{(\mu+1)}$, $S^{(\mu)}_{\mathcal{S}^{(\mu)}}=S^{(\mu+1)}$, and $T^{(\mu)}_{\mathcal{S}^{(\mu)}}=T^{(\mu+1)}$.  Due to the maximality of $\mu$ we know that $G^{(\mu+1)}$ is $ST^{(4)}$-$(S^{(\mu+1)},T^{(\mu+1)})$-degenerate.  So, by Observation~\ref{obs:ExtendingToComplete}, $G^{(\mu)}$ is $ST^{(4)}$-$(S^{(\mu)},T^{(\mu)})$-degenerate, which is a contradiction.

\end{proof}

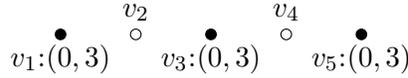
\begin{figure}[h!]
    \begin{itemize}

        \item If $\mu=1$:
        \vspace{-1.5cm}
    
    \begin{center}\begin{tikzpicture}[scale=0.8,every node/.style={circle, draw, fill, inner sep=0pt, minimum size=4pt, text=black}]
			\node [anchor=center, label={[shift={(0,0.45)}]below:$v_1\hspace{-1mm}:\hspace{-1mm}(1,3)$}, color=black] (0) at (-1.25,0) {};
			\node [anchor=center, label={[shift={(0,-0.45)}]above:$v_2\hspace{-1mm}:\hspace{-1mm}(4,3)$}, color=black] (1) at (0,0) {};
			\node [anchor=center, label={[shift={(0,0.45)}]below:$v_3\hspace{-1mm}:\hspace{-1mm}(4,3)$}, color=black] (2) at (1.25,0) {};
			\node [anchor=center, label={[shift={(0,-0.45)}]above:$v_4\hspace{-1mm}:\hspace{-1mm}(4,3)$}, color=black] (3) at (2.5,0) {};
			\node [anchor=center, label={[shift={(0,0.45)}]below:$v_5\hspace{-1mm}:\hspace{-1mm}(1,3)$}, color=black] (4) at (3.75,0) {};

			\draw (0.center) to (1.center);
			\draw (1.center) to (2.center);
			\draw (2.center) to (3.center);
			\draw (3.center) to (4.center);

            \draw [->, thick] (-0.5,-0.75) to (1.5,-1.5);
            \node [anchor=center, label={center:$s^{(1)}_1=\ShSa(v_2,\{v_1\})$}, color=white] (5) at (-2.5,-1.5) {};

			\node [anchor=center, label={[shift={(0,0.45)}]below:$v_1\hspace{-1mm}:\hspace{-1mm}(1,3)$}, color=black] (0) at (2,-1.75) {};
			\node [anchor=center, label={[shift={(0,-0.45)}]above:$v_2\hspace{-1mm}:\hspace{-1mm}(4,2)$}, color=black] (1) at (3.25,-1.75) {};
			\node [anchor=center, label={[shift={(0,0.45)}]below:$v_3\hspace{-1mm}:\hspace{-1mm}(3,3)$}, color=black] (2) at (4.5,-1.75) {};
			\node [anchor=center, label={[shift={(0,-0.45)}]above:$v_4\hspace{-1mm}:\hspace{-1mm}(4,3)$}, color=black] (3) at (5.75,-1.75) {};
			\node [anchor=center, label={[shift={(0,0.45)}]below:$v_5\hspace{-1mm}:\hspace{-1mm}(1,3)$}, color=black] (4) at (7,-1.75) {};

			\draw (0.center) to (1.center);
			\draw (1.center) to (2.center);
			\draw (2.center) to (3.center);
			\draw (3.center) to (4.center);

            \draw [->, thick] (2.75,-2.5) to (4.75,-3.25);
            \node [anchor=center, label={center:$s^{(1)}_2=\ShSa(v_2,\{v_1\})$}, color=white] (5) at (0.75,-3.25) {};

			\node [anchor=center, label={[shift={(0,0.45)}]below:$v_1\hspace{-1mm}:\hspace{-1mm}(1,3)$}, color=black] (0) at (5.25,-3.5) {};
			\node [anchor=center, label={[shift={(0,-0.45)}]above:$v_2\hspace{-1mm}:\hspace{-1mm}(4,1)$}, color=black] (1) at (6.5,-3.5) {};
			\node [anchor=center, label={[shift={(0,0.45)}]below:$v_3\hspace{-1mm}:\hspace{-1mm}(2,3)$}, color=black] (2) at (7.75,-3.5) {};
			\node [anchor=center, label={[shift={(0,-0.45)}]above:$v_4\hspace{-1mm}:\hspace{-1mm}(4,3)$}, color=black] (3) at (9,-3.5) {};
			\node [anchor=center, label={[shift={(0,0.45)}]below:$v_5\hspace{-1mm}:\hspace{-1mm}(1,3)$}, color=black] (4) at (10.25,-3.5) {};

			\draw (0.center) to (1.center);
			\draw (1.center) to (2.center);
			\draw (2.center) to (3.center);
			\draw (3.center) to (4.center);
		\end{tikzpicture}\end{center}
        \vspace{-1.75cm}
    
        \item If $\mu=2$:
        \vspace{-1.5cm}
    
    \begin{center}\begin{tikzpicture}[scale=0.8,every node/.style={circle, draw, fill, inner sep=0pt, minimum size=4pt, text=black}]
			\node [anchor=center, label={[shift={(0,0.45)}]below:$v_1\hspace{-1mm}:\hspace{-1mm}(1,3)$}, color=black] (0) at (-1.25,0) {};
			\node [anchor=center, label={[shift={(0,-0.45)}]above:$v_2\hspace{-1mm}:\hspace{-1mm}(4,1)$}, color=black] (1) at (0,0) {};
			\node [anchor=center, label={[shift={(0,0.45)}]below:$v_3\hspace{-1mm}:\hspace{-1mm}(2,3)$}, color=black] (2) at (1.25,0) {};
			\node [anchor=center, label={[shift={(0,-0.45)}]above:$v_4\hspace{-1mm}:\hspace{-1mm}(4,3)$}, color=black] (3) at (2.5,0) {};
			\node [anchor=center, label={[shift={(0,0.45)}]below:$v_5\hspace{-1mm}:\hspace{-1mm}(1,3)$}, color=black] (4) at (3.75,0) {};

			\draw (0.center) to (1.center);
			\draw (1.center) to (2.center);
			\draw (2.center) to (3.center);
			\draw (3.center) to (4.center);

            \draw [->, thick] (-0.5,-0.75) to (1.5,-1.5);
            \node [anchor=center, label={center:$s^{(2)}_1=\ShSa(v_4,\{v_3\})$}, color=white] (5) at (-2.5,-1.5) {};

			\node [anchor=center, label={[shift={(0,0.45)}]below:$v_1\hspace{-1mm}:\hspace{-1mm}(1,3)$}, color=black] (0) at (2,-1.75) {};
			\node [anchor=center, label={[shift={(0,-0.45)}]above:$v_2\hspace{-1mm}:\hspace{-1mm}(4,1)$}, color=black] (1) at (3.25,-1.75) {};
			\node [anchor=center, label={[shift={(0,0.45)}]below:$v_3\hspace{-1mm}:\hspace{-1mm}(2,3)$}, color=black] (2) at (4.5,-1.75) {};
			\node [anchor=center, label={[shift={(0,-0.45)}]above:$v_4\hspace{-1mm}:\hspace{-1mm}(4,2)$}, color=black] (3) at (5.75,-1.75) {};
			\node [anchor=center, label={[shift={(0,0.45)}]below:$v_5\hspace{-1mm}:\hspace{-1mm}(0,3)$}, color=black] (4) at (7,-1.75) {};

			\draw (0.center) to (1.center);
			\draw (1.center) to (2.center);
			\draw (2.center) to (3.center);
			\draw (3.center) to (4.center);
		\end{tikzpicture}\end{center}
        \vspace{-1.75cm}
    
        \item If $\mu=3$:
        \vspace{-1.5cm}
    
    \begin{center}\begin{tikzpicture}[scale=0.8,every node/.style={circle, draw, fill, inner sep=0pt, minimum size=4pt, text=black}]
			\node [anchor=center, label={[shift={(0,0.45)}]below:$v_1\hspace{-1mm}:\hspace{-1mm}(1,3)$}, color=black] (0) at (-1.25,0) {};
			\node [anchor=center, label={[shift={(0,-0.45)}]above:$v_2\hspace{-1mm}:\hspace{-1mm}(4,1)$}, color=black] (1) at (0,0) {};
			\node [anchor=center, label={[shift={(0,0.45)}]below:$v_3\hspace{-1mm}:\hspace{-1mm}(2,3)$}, color=black] (2) at (1.25,0) {};
			\node [anchor=center, label={[shift={(0,-0.45)}]above:$v_4\hspace{-1mm}:\hspace{-1mm}(4,2)$}, color=black] (3) at (2.5,0) {};
			\node [anchor=center, label={[shift={(0,0.45)}]below:$v_5\hspace{-1mm}:\hspace{-1mm}(0,3)$}, color=black] (4) at (3.75,0) {};

			\draw (0.center) to (1.center);
			\draw (1.center) to (2.center);
			\draw (2.center) to (3.center);
			\draw (3.center) to (4.center);

            \draw [->, thick] (-0.5,-0.75) to (1.5,-1.5);
            \node [anchor=center, label={center:$s^{(3)}_1=\ShSa(v_4,\{v_5\})$}, color=white] (5) at (-2.5,-1.5) {};

			\node [anchor=center, label={[shift={(0,0.45)}]below:$v_1\hspace{-1mm}:\hspace{-1mm}(1,3)$}, color=black] (0) at (2,-1.75) {};
			\node [anchor=center, label={[shift={(0,-0.45)}]above:$v_2\hspace{-1mm}:\hspace{-1mm}(4,1)$}, color=black] (1) at (3.25,-1.75) {};
			\node [anchor=center, label={[shift={(0,0.45)}]below:$v_3\hspace{-1mm}:\hspace{-1mm}(1,3)$}, color=black] (2) at (4.5,-1.75) {};
			\node [anchor=center, label={[shift={(0,-0.45)}]above:$v_4\hspace{-1mm}:\hspace{-1mm}(4,1)$}, color=black] (3) at (5.75,-1.75) {};
			\node [anchor=center, label={[shift={(0,0.45)}]below:$v_5\hspace{-1mm}:\hspace{-1mm}(0,3)$}, color=black] (4) at (7,-1.75) {};

			\draw (0.center) to (1.center);
			\draw (1.center) to (2.center);
			\draw (2.center) to (3.center);
			\draw (3.center) to (4.center);

            \draw [->, thick] (2.75,-2.5) to (4.75,-3.25);
            \node [anchor=center, label={center:$s^{(3)}_2=\ShSa(v_4,\{v_5\})$}, color=white] (5) at (0.75,-3.25) {};

			\node [anchor=center, label={[shift={(0,0.45)}]below:$v_1\hspace{-1mm}:\hspace{-1mm}(1,3)$}, color=black] (0) at (5.25,-3.5) {};
			\node [anchor=center, label={[shift={(0,-0.45)}]above:$v_2\hspace{-1mm}:\hspace{-1mm}(4,1)$}, color=black] (1) at (6.5,-3.5) {};
			\node [anchor=center, label={[shift={(0,0.45)}]below:$v_3\hspace{-1mm}:\hspace{-1mm}(0,3)$}, color=black] (2) at (7.75,-3.5) {};
			\node [anchor=center, label={[shift={(0,0)}]above:$v_4$}, color=black, fill=white] (3) at (9,-3.5) {};
			\node [anchor=center, label={[shift={(0,0.45)}]below:$v_5\hspace{-1mm}:\hspace{-1mm}(0,3)$}, color=black] (4) at (10.25,-3.5) {};

			\draw (0.center) to (1.center);
			\draw (1.center) to (2.center);
		\end{tikzpicture}\end{center}
        \vspace{-1.75cm}
    
        \item If $\mu=4$:
        \vspace{-1.5cm}
    
    \begin{center}\begin{tikzpicture}[scale=0.8,every node/.style={circle, draw, fill, inner sep=0pt, minimum size=4pt, text=black}]
			\node [anchor=center, label={[shift={(0,0.45)}]below:$v_1\hspace{-1mm}:\hspace{-1mm}(1,3)$}, color=black] (0) at (-1.25,0) {};
			\node [anchor=center, label={[shift={(0,-0.45)}]above:$v_2\hspace{-1mm}:\hspace{-1mm}(4,1)$}, color=black] (1) at (0,0) {};
			\node [anchor=center, label={[shift={(0,0.45)}]below:$v_3\hspace{-1mm}:\hspace{-1mm}(0,3)$}, color=black] (2) at (1.25,0) {};
			\node [anchor=center, label={[shift={(0,0)}]above:$v_4$}, color=black, fill=white] (3) at (2.5,0) {};
			\node [anchor=center, label={[shift={(0,0.45)}]below:$v_5\hspace{-1mm}:\hspace{-1mm}(0,3)$}, color=black] (4) at (3.75,0) {};

			\draw (0.center) to (1.center);
			\draw (1.center) to (2.center);

            \draw [->, thick] (-0.5,-0.75) to (1.5,-1.5);
            \node [anchor=center, label={center:$s^{(4)}_1=\ShSa(v_2,\{v_3\})$}, color=white] (5) at (-2.5,-1.5) {};

			\node [anchor=center, label={[shift={(0,0.45)}]below:$v_1\hspace{-1mm}:\hspace{-1mm}(0,3)$}, color=black] (0) at (2,-1.75) {};
			\node [anchor=center, label={[shift={(0,0)}]above:$v_2$}, color=black, fill=white] (1) at (3.25,-1.75) {};
			\node [anchor=center, label={[shift={(0,0.45)}]below:$v_3\hspace{-1mm}:\hspace{-1mm}(0,3)$}, color=black] (2) at (4.5,-1.75) {};
			\node [anchor=center, label={[shift={(0,0)}]above:$v_4$}, color=black, fill=white] (3) at (5.75,-1.75) {};
			\node [anchor=center, label={[shift={(0,0.45)}]below:$v_5\hspace{-1mm}:\hspace{-1mm}(0,3)$}, color=black] (4) at (7,-1.75) {};
		\end{tikzpicture}\end{center}
        \vspace{-1.75cm}
        
        \item If $\mu=5$:
        \vspace{-1.5cm}
    
    \begin{center}\begin{tikzpicture}[scale=0.8,every node/.style={circle, draw, fill, inner sep=0pt, minimum size=4pt, text=black}]
			\node [anchor=center, label={[shift={(0,0.45)}]below:$v_1\hspace{-1mm}:\hspace{-1mm}(0,3)$}, color=black] (0) at (-1.25,0) {};
			\node [anchor=center, label={[shift={(0,0)}]above:$v_2$}, color=black, fill=white] (1) at (0,0) {};
			\node [anchor=center, label={[shift={(0,0.45)}]below:$v_3\hspace{-1mm}:\hspace{-1mm}(0,3)$}, color=black] (2) at (1.25,0) {};
			\node [anchor=center, label={[shift={(0,0)}]above:$v_4$}, color=black, fill=white] (3) at (2.5,0) {};
			\node [anchor=center, label={[shift={(0,0.45)}]below:$v_5\hspace{-1mm}:\hspace{-1mm}(0,3)$}, color=black] (4) at (3.75,0) {};

		\end{tikzpicture}\end{center}
        \vspace{-1.25cm}
        
    \end{itemize}
    
    \caption{In the proof of Lemma~\ref{lem:path} when $G=P_5$ we know that $\mu\in[5]$.  In the case $\mu=5$, $G^{(5)}$ is edgeless.  For each of the other possible cases, the figure provides pictorial illustrations of $G^{(\mu)}_i$ along with the ordered pairs $(S^{(\mu)}_i,T^{(\mu)}_i)$ for the operations of $\mathcal{S}^{(\mu)}$.}
    \label{fig:P5}
\end{figure}

\begin{customthm}{\bf\ref{thm:c2r+1}}
	For all $r\in\N$, $ST^{(4)*}(C_{2r+1})=2+1/r$.  Consequently $ST^{(4)*}(C_{2r+1})=ST^{(3)*}(C_{2r+1})=\chi_{_{DP}}^*(C_{2r+1})=\chi_{\ell}^*(C_{2r+1})=\chi^*(C_{2r+1})=2+1/r$.
\end{customthm}

    In the proof below, we construct an appropriate restricted $(G,S,T)$-legal sequence where $G=C_{2r+1}$.  To aid the reader, Figure~\ref{fig:C7} depicts this sequence when $G=C_7$.

\begin{proof}
    It is well known that $\chi^*(C_{2r+1})=2+1/r$ (see e.g.,~\cite{AT97}) which means $ST^{(4)*}(C_{2r+1})\geq 2+1/r$.  To prove our result, we show $ST^{(4)*}(C_{2r+1}) \leq 2 + 1/r$ by showing that $C_{2r+1}$ is $ST^{(4)}$-$(r+1,r)$-degenerate.  We accomplish this by showing the existence of an appropriate restricted $(G,S,T)$-legal sequence whose final output graph, shield, and target satisfy the hypothesis of Lemma~\ref{lem:path}.

    Suppose $G=C_{2r+1}$ and the vertices of $G$ in cyclic order are $v_1,\ldots,v_{2r+1}$. Suppose $S$ and $T$ are the shield and target of $G$ whose values are identically $r+1$ and $r$ respectively.  We wish to inductively construct a $2r$-term restricted $(G,S,T)$-legal sequence.  Suppose $\mathcal{S}$ is our inductively constructed sequence of at most $2r$ terms where the $i^{th}$ term is given by $s_i=\Sh(v_{2r})$ for $i\in[r]$ and $s_i=\Sh(v_{2r+1})$ for $i\in[r+1:2r]$.

    Following the framework described in Remark~\ref{rem:Rigor}, it can be shown that $\mathcal{S}$ contains $2r$ terms and $\mathcal{S}$ is a restricted $(G,S,T)$-legal sequence.  It can also be shown that $G_\mathcal{S}=P_{2r-1}$, $S_\mathcal{S}(v_1)=S_\mathcal{S}(v_{2r-1})=1$ while $S_\mathcal{S}(v_i)=r+1$ for all $i\in[2:2r-2]$, and $T_\mathcal{S}(v_i)=r$ for all $i\in[2r-1]$.
    
    Notice that $G_\mathcal{S}$, $S_\mathcal{S}$ and $T_\mathcal{S}$ satisfy the hypothesis of Lemma~\ref{lem:path}.  Consequently $G_\mathcal{S}$ is $ST^{(4)}$-$(S_\mathcal{S},T_\mathcal{S})$-degenerate.  Therefore, $G$ is $ST^{(4)}$-$(r+1,r)$-degenerate by Observation~\ref{obs:ExtendingToComplete}.
\end{proof}

\begin{figure}[h]
    \centering
    
    \begin{enumerate}[(1)]\centering
    \item ~\hspace{1.375in}~
    \vspace{-11mm}
    
    \begin{tikzpicture}[scale=0.8,every node/.style={circle, draw, fill, inner sep=0pt, minimum size=4pt, text=black}]
			\node [anchor=center, label={[shift={(0,0.45)}]below:$v_1\hspace{-1mm}:\hspace{-1mm}(4,3)$}, color=black] (0) at (-1.125,0) {};
			\node [anchor=center, label={[shift={(0.15,-0.45)}]above:$v_2\hspace{-1mm}:\hspace{-1mm}(4,3)$}, color=black] (1) at (0,0) {};
			\node [anchor=center, label={[shift={(0,0.45)}]below:$v_3\hspace{-1mm}:\hspace{-1mm}(4,3)$}, color=black] (2) at (1.125,0) {};
			\node [anchor=center, label={[shift={(0,-0.45)}]above:$v_4\hspace{-1mm}:\hspace{-1mm}(4,3)$}, color=black] (3) at (2.25,0) {};
			\node [anchor=center, label={[shift={(0,0.45)}]below:$v_5\hspace{-1mm}:\hspace{-1mm}(4,3)$}, color=black] (4) at (3.375,0) {};
			\node [anchor=center, label={[shift={(-0.15,-0.45)}]above:$v_6\hspace{-1mm}:\hspace{-1mm}(4,3)$}, color=black] (5) at (4.5,0) {};
			\node [anchor=center, label={[shift={(0,0.45)}]below:$v_7\hspace{-1mm}:\hspace{-1mm}(4,3)$}, color=black] (6) at (5.625,0) {};

			\draw (0.center) to (1.center);
			\draw (1.center) to (2.center);
			\draw (2.center) to (3.center);
			\draw (3.center) to (4.center);
			\draw (4.center) to (5.center);
			\draw (5.center) to (6.center);
            \draw [in=45, out=135](6.center) to (0.center);
		\end{tikzpicture}
        \vspace{-8mm}
        
    \begin{multicols}{2}
        \item ~\hspace{1.375in}~
    \vspace{-11mm}
    
    \begin{tikzpicture}[scale=0.8,every node/.style={circle, draw, fill, inner sep=0pt, minimum size=4pt, text=black}]
			\node [anchor=center, label={[shift={(0,0.45)}]below:$v_1\hspace{-1mm}:\hspace{-1mm}(4,3)$}, color=black] (0) at (-1.125,0) {};
			\node [anchor=center, label={[shift={(0.15,-0.45)}]above:$v_2\hspace{-1mm}:\hspace{-1mm}(4,3)$}, color=black] (1) at (0,0) {};
			\node [anchor=center, label={[shift={(0,0.45)}]below:$v_3\hspace{-1mm}:\hspace{-1mm}(4,3)$}, color=black] (2) at (1.125,0) {};
			\node [anchor=center, label={[shift={(0,-0.45)}]above:$v_4\hspace{-1mm}:\hspace{-1mm}(4,3)$}, color=black] (3) at (2.25,0) {};
			\node [anchor=center, label={[shift={(0,0.45)}]below:$v_5\hspace{-1mm}:\hspace{-1mm}(3,3)$}, color=black] (4) at (3.375,0) {};
			\node [anchor=center, label={[shift={(-0.15,-0.45)}]above:$v_6\hspace{-1mm}:\hspace{-1mm}(4,2)$}, color=black] (5) at (4.5,0) {};
			\node [anchor=center, label={[shift={(0,0.45)}]below:$v_7\hspace{-1mm}:\hspace{-1mm}(3,3)$}, color=black] (6) at (5.625,0) {};

			\draw (0.center) to (1.center);
			\draw (1.center) to (2.center);
			\draw (2.center) to (3.center);
			\draw (3.center) to (4.center);
			\draw (4.center) to (5.center);
			\draw (5.center) to (6.center);
            \draw [in=45, out=135](6.center) to (0.center);
		\end{tikzpicture}
    
        \item ~\hspace{1.375in}~
    \vspace{-11mm}
    
    \begin{tikzpicture}[scale=0.8,every node/.style={circle, draw, fill, inner sep=0pt, minimum size=4pt, text=black}]
			\node [anchor=center, label={[shift={(0,0.45)}]below:$v_1\hspace{-1mm}:\hspace{-1mm}(4,3)$}, color=black] (0) at (-1.125,0) {};
			\node [anchor=center, label={[shift={(0.15,-0.45)}]above:$v_2\hspace{-1mm}:\hspace{-1mm}(4,3)$}, color=black] (1) at (0,0) {};
			\node [anchor=center, label={[shift={(0,0.45)}]below:$v_3\hspace{-1mm}:\hspace{-1mm}(4,3)$}, color=black] (2) at (1.125,0) {};
			\node [anchor=center, label={[shift={(0,-0.45)}]above:$v_4\hspace{-1mm}:\hspace{-1mm}(4,3)$}, color=black] (3) at (2.25,0) {};
			\node [anchor=center, label={[shift={(0,0.45)}]below:$v_5\hspace{-1mm}:\hspace{-1mm}(2,3)$}, color=black] (4) at (3.375,0) {};
			\node [anchor=center, label={[shift={(-0.15,-0.45)}]above:$v_6\hspace{-1mm}:\hspace{-1mm}(4,1)$}, color=black] (5) at (4.5,0) {};
			\node [anchor=center, label={[shift={(0,0.45)}]below:$v_7\hspace{-1mm}:\hspace{-1mm}(2,3)$}, color=black] (6) at (5.625,0) {};

			\draw (0.center) to (1.center);
			\draw (1.center) to (2.center);
			\draw (2.center) to (3.center);
			\draw (3.center) to (4.center);
			\draw (4.center) to (5.center);
			\draw (5.center) to (6.center);
            \draw [in=45, out=135](6.center) to (0.center);
		\end{tikzpicture}
    \end{multicols}
    \vspace{-12mm}
        
    \begin{multicols}{2}

        \item ~\hspace{1.375in}~
    \vspace{-11mm}
    
    \begin{tikzpicture}[scale=0.8,every node/.style={circle, draw, fill, inner sep=0pt, minimum size=4pt, text=black}]
			\node [anchor=center, label={[shift={(0,0.45)}]below:$v_1\hspace{-1mm}:\hspace{-1mm}(4,3)$}, color=black] (0) at (-1.125,0) {};
			\node [anchor=center, label={[shift={(0.15,-0.45)}]above:$v_2\hspace{-1mm}:\hspace{-1mm}(4,3)$}, color=black] (1) at (0,0) {};
			\node [anchor=center, label={[shift={(0,0.45)}]below:$v_3\hspace{-1mm}:\hspace{-1mm}(4,3)$}, color=black] (2) at (1.125,0) {};
			\node [anchor=center, label={[shift={(0,-0.45)}]above:$v_4\hspace{-1mm}:\hspace{-1mm}(4,3)$}, color=black] (3) at (2.25,0) {};
			\node [anchor=center, label={[shift={(0,0.45)}]below:$v_5\hspace{-1mm}:\hspace{-1mm}(1,3)$}, color=black] (4) at (3.375,0) {};
			\node [anchor=center, label={[shift={(0,0)}]above:$v_6$}, color=black, fill=white] (5) at (4.5,0) {};
			\node [anchor=center, label={[shift={(0,0.45)}]below:$v_7\hspace{-1mm}:\hspace{-1mm}(1,3)$}, color=black] (6) at (5.625,0) {};

			\draw (0.center) to (1.center);
			\draw (1.center) to (2.center);
			\draw (2.center) to (3.center);
			\draw (3.center) to (4.center);
            \draw [in=45, out=135](6.center) to (0.center);
		\end{tikzpicture}

        \item ~\hspace{1.375in}~
    \vspace{-11mm}
    
    \begin{tikzpicture}[scale=0.8,every node/.style={circle, draw, fill, inner sep=0pt, minimum size=4pt, text=black}]
			\node [anchor=center, label={[shift={(0,0.45)}]below:$v_1\hspace{-1mm}:\hspace{-1mm}(3,3)$}, color=black] (0) at (-1.125,0) {};
			\node [anchor=center, label={[shift={(0.15,-0.45)}]above:$v_2\hspace{-1mm}:\hspace{-1mm}(4,3)$}, color=black] (1) at (0,0) {};
			\node [anchor=center, label={[shift={(0,0.45)}]below:$v_3\hspace{-1mm}:\hspace{-1mm}(4,3)$}, color=black] (2) at (1.125,0) {};
			\node [anchor=center, label={[shift={(0,-0.45)}]above:$v_4\hspace{-1mm}:\hspace{-1mm}(4,3)$}, color=black] (3) at (2.25,0) {};
			\node [anchor=center, label={[shift={(0,0.45)}]below:$v_5\hspace{-1mm}:\hspace{-1mm}(1,3)$}, color=black] (4) at (3.375,0) {};
			\node [anchor=center, label={[shift={(0,0)}]above:$v_6$}, color=black, fill=white] (5) at (4.5,0) {};
			\node [anchor=center, label={[shift={(0,0.45)}]below:$v_7\hspace{-1mm}:\hspace{-1mm}(1,2)$}, color=black] (6) at (5.625,0) {};

			\draw (0.center) to (1.center);
			\draw (1.center) to (2.center);
			\draw (2.center) to (3.center);
			\draw (3.center) to (4.center);
            \draw [in=45, out=135](6.center) to (0.center);
		\end{tikzpicture}
    \end{multicols}
    \vspace{-12mm}
        
    \begin{multicols}{2}
    
        \item ~\hspace{1.375in}~
    \vspace{-11mm}
    
    \begin{tikzpicture}[scale=0.8,every node/.style={circle, draw, fill, inner sep=0pt, minimum size=4pt, text=black}]
			\node [anchor=center, label={[shift={(0,0.45)}]below:$v_1\hspace{-1mm}:\hspace{-1mm}(2,3)$}, color=black] (0) at (-1.125,0) {};
			\node [anchor=center, label={[shift={(0.15,-0.45)}]above:$v_2\hspace{-1mm}:\hspace{-1mm}(4,3)$}, color=black] (1) at (0,0) {};
			\node [anchor=center, label={[shift={(0,0.45)}]below:$v_3\hspace{-1mm}:\hspace{-1mm}(4,3)$}, color=black] (2) at (1.125,0) {};
			\node [anchor=center, label={[shift={(0,-0.45)}]above:$v_4\hspace{-1mm}:\hspace{-1mm}(4,3)$}, color=black] (3) at (2.25,0) {};
			\node [anchor=center, label={[shift={(0,0.45)}]below:$v_5\hspace{-1mm}:\hspace{-1mm}(1,3)$}, color=black] (4) at (3.375,0) {};
			\node [anchor=center, label={[shift={(0,0)}]above:$v_6$}, color=black, fill=white] (5) at (4.5,0) {};
			\node [anchor=center, label={[shift={(0,0.45)}]below:$v_7\hspace{-1mm}:\hspace{-1mm}(1,1)$}, color=black] (6) at (5.625,0) {};

			\draw (0.center) to (1.center);
			\draw (1.center) to (2.center);
			\draw (2.center) to (3.center);
			\draw (3.center) to (4.center);
            \draw [in=45, out=135](6.center) to (0.center);
		\end{tikzpicture}

        \item ~\hspace{1.375in}~
    \vspace{-11mm}
    
    \begin{tikzpicture}[scale=0.8,every node/.style={circle, draw, fill, inner sep=0pt, minimum size=4pt, text=black}]
			
            \draw [in=45, out=135, color=white](6.center) to (0.center);\node [anchor=center, label={[shift={(0,0.45)}]below:$v_1\hspace{-1mm}:\hspace{-1mm}(1,3)$}, color=black] (0) at (-1.125,0) {};
			\node [anchor=center, label={[shift={(0.15,-0.45)}]above:$v_2\hspace{-1mm}:\hspace{-1mm}(4,3)$}, color=black] (1) at (0,0) {};
			\node [anchor=center, label={[shift={(0,0.45)}]below:$v_3\hspace{-1mm}:\hspace{-1mm}(4,3)$}, color=black] (2) at (1.125,0) {};
			\node [anchor=center, label={[shift={(0,-0.45)}]above:$v_4\hspace{-1mm}:\hspace{-1mm}(4,3)$}, color=black] (3) at (2.25,0) {};
			\node [anchor=center, label={[shift={(0,0.45)}]below:$v_5\hspace{-1mm}:\hspace{-1mm}(1,3)$}, color=black] (4) at (3.375,0) {};
			\node [anchor=center, label={[shift={(0,0)}]above:$v_6$}, color=black, fill=white] (5) at (4.5,0) {};
			\node [anchor=center, label={[shift={(0,0)}]below:$v_7$}, color=black, fill=white] (6) at (5.625,0) {};

			\draw (0.center) to (1.center);
			\draw (1.center) to (2.center);
			\draw (2.center) to (3.center);
			\draw (3.center) to (4.center);
		\end{tikzpicture}
    \end{multicols}
    \vspace{-10mm}
        \end{enumerate}
    \vspace{-6mm}
    
    \caption{Each (i) is a pictorial illustration of $G_i$ along with the ordered pairs $(S_i,T_i)$.  Notice that (2), (3), (4), (5), (6), and (7) correspond to the outputs of the operations of $\mathcal{S}$ when $G=C_7$ as described in the proof of Theorem~\ref{thm:c2r+1}.}
    \label{fig:C7}
\end{figure}
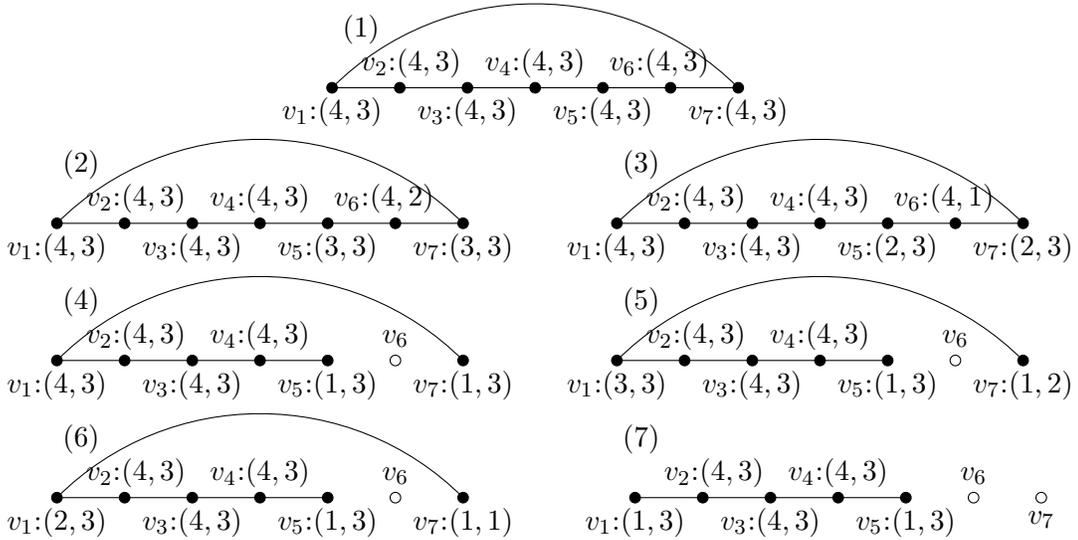

Recall that Zhou, Zhu, and Zhu~\cite{ZZZ23} showed that the Alon-Tarsi number of a graph $G$ is a lower bound on $ST^{(1)}(G)$.  So, we have $AT(G)\leq ST^{(3)}(G)$. Notice Theorem~\ref{thm:c2r+1} shows that the Alon-Tarsi number of a graph need not be a lower bound on its fractional strict type-4 degeneracy.  Indeed, for $r\geq2$, $ST^{(4)*}(C_{2r+1})<AT(C_{2r+1}) = 3$. 

We end this subsection by using Theorems~\ref{thm:c2r} and~\ref{thm:c2r+1} in conjunction with Corollary~\ref{cor:k-core} to prove that the fractional strict type-4 degeneracy of a unicyclic graph equals its fractional chromatic number.  With this final corollary and the results above we will have that connected graphs $G$ with average degree at most 2 satisfy $ST^{(4)*}(G)=\chi_{_{DP}}^*(G)$.  Recall that this means such graphs satisfy the equation in Question~\ref{ques:slack}.

\begin{cor}\label{cor:unicyclic}
    For any unicyclic graph $G$, $ST^{(4)*}(G)=\chi^*(G)$.
\end{cor}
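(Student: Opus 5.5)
The plan is to sandwich $ST^{(4)*}(G)$ between $\chi^*(G)$ from below and the value for the unique cycle from above. For the lower bound, Theorem~\ref{thm: upbound} together with the chain $\chi^*(G)\le\chi_{_{DP}}^*(G)$ gives $\chi^*(G)\le ST^{(4)*}(G)$. So only the upper bound requires work.

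Let $C$ be the unique cycle of $G$, say $C=C_\ell$. As noted in the introduction, the $2$-core of a unicyclic graph is exactly this cycle. I would apply Corollary~\ref{cor:k-core} with $k=2$. Whenever $C$ is $ST^{(4)}$-$(s,t)$-degenerate with $s\ge t$, the corollary yields that $G$ is also $ST^{(4)}$-$(s,t)$-degenerate.

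The cases split on the parity of $\ell$.
\begin{itemize}
\item If $\ell=2r$ with $r\ge2$, the proof of Theorem~\ref{thm:c2r} shows $C_{2r}$ is $ST^{(4)}$-$(t+1,t)$-degenerate for every $t\in\N$. Since $t+1\ge t$, $G$ is $ST^{(4)}$-$(t+1,t)$-degenerate for all $t$, so $ST^{(4)*}(G)\le\inf_t (2t+1)/t=2$.
\item If $\ell=2r+1$, the proof of Theorem~\ref{thm:c2r+1} shows $C_{2r+1}$ is $ST^{(4)}$-$(r+1,r)$-degenerate. Since $r+1\ge r$, we get $ST^{(4)*}(G)\le 2+1/r$.
\end{itemize}

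It remains to check that these upper bounds equal $\chi^*(G)$. If $\ell$ is even, $G$ is bipartite with at least one edge, so $\chi^*(G)=2$. If $\ell=2r+1$, then $\chi^*(G)\ge\chi^*(C_{2r+1})=2+1/r$ because $C_{2r+1}$ is a subgraph of $G$. Combined with the upper bound (or with $\chi^*\le ST^{(4)*}$), this forces equality.

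The only delicate point is a bookkeeping one. The theorems as stated give $ST^{(4)*}$ values, but the corollary needs explicit degeneracy pairs $(s,t)$ with $s\ge(k-1)t=t$. The proofs do supply such pairs, namely $(t+1,t)$ and $(r+1,r)$. For the even case, because the infimum may not be attained, I would pass the whole family $(t+1,t)$ through the corollary rather than a single pair.
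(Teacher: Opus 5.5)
Your proposal is correct and follows essentially the same route as the paper. You pass the pairs $(t+1,t)$ and $(r+1,r)$ from the proofs of Theorems~\ref{thm:c2r} and~\ref{thm:c2r+1} through Corollary~\ref{cor:k-core} with $k=2$, then sandwich using $\chi^*(C)\le\chi^*(G)\le ST^{(4)*}(G)$; note that in the odd case equality needs both the lower bound $\chi^*(G)\le ST^{(4)*}(G)$ and the upper bound together, not one or the other, though you did establish both.
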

\begin{proof}

    Suppose $H$ is the cycle contained in $G$.  Due to the monotonicity of fractional coloring is easy to show that $\chi^*(H)\leq\chi^*(G)$.  It is also clear that $H$ is the $2$-core of $G$.  We will prove the desired result when $H$ is an odd cycle and when $H$ is an even cycle.

    First, suppose $H=C_{2r+1}$ for some $r\in\N$.  By Theorem~\ref{thm:c2r+1} we have that $H$ is $ST^{(4)}$-$(r+1,r)$-degenerate.  Since $r+1\geq (2-1)r$, we know that $G$ is $ST^{(4)}$-$(r+1,r)$-degenerate by Corollary~\ref{cor:k-core}.  Thus, $2+1/r = \chi^*(H) \leq \chi^*(G) \leq ST^{(4)*}(G) \leq 2+1/r$ as desired.
    
    Next, suppose $H=C_{2r}$ for some $r\in\N$ satisfying $r\geq2$.  By the proof of Theorem~\ref{thm:c2r} we have that $H$ is $ST^{(4)}$-$(t+1,t)$-degenerate for all $t\in\N$.  Since $t+1\geq (2-1)t$, we know that $G$ is $ST^{(4)}$-$(t+1,t)$-degenerate by Corollary~\ref{cor:k-core} for all $t \in \N$.  Thus, $2 = \chi^*(H) \leq \chi^*(G) \leq ST^{(4)*}(G) \leq 2$ as desired.
\end{proof}

\subsection{Complete bipartite graphs}\label{subsec:CompBipGraphs}

We now prove an upper bound on the fractional strict type-$4$ degeneracy of complete bipartite graphs.  The following theorem gives us improvement on some of the previously best known upper bounds on certain fractional DP-chromatic numbers.  For example, Theorem~\ref{thm:Km,nUpperBound} shows that $ST^{(4)*}(K_{2,3})\leq7/3$.  Consequently, Theorem~\ref{thm: upbound} shows that $\chi^*_{_{DP}}(K_{2,3})\leq7/3$.  This is an improvement of the previously best known upper bound of approximately of $2.619$, shown in~\cite{DKM25}.

\begin{customthm}{\bf\ref{thm:Km,nUpperBound}}
	For all $m,n\in\N$ satisfying $2\leq m\leq n$, $ST^{(4)*}(K_{m,n})\leq m+1-m/n$.  Consequently $\chi_{_{DP}}^*(K_{m,n})\leq m+1-m/n$.
\end{customthm}

    In the proof below, we construct a restricted $(G,S,T)$-legal sequence where $G=K_{m,n}$.  To aid the reader, Figure~\ref{fig:K23_9,6} depicts an example of one such sequence when $G=K_{2,3}$.

\begin{proof}
    Suppose $G=K_{m,n}$ and the partite sets of $G$ are $A=\{a_1,\ldots,a_m\}$ and $B=\{b_1,\ldots,b_n\}$.   We will show that $G$ is $ST^{(4)}$-$(m(n-1)t+1,nt)$-degenerate for all $t\in\N$ which implies $ST^{(4)*}(K_{m,n})\leq m+1-m/n$. We achieve this by proving something stronger.

    Fix $t\in\N$. For each $j\in[t]$, suppose $S^{(j)}$ is the shield of $G$ given by
    \begin{equation*}
        S^{(j)}(v)=\left\{\begin{array}{l l}m(n-1)t+1&\text{if }v\in A \\m(n-1)j&\text{if }v\in B,\end{array}\right.
    \end{equation*}
    and $T^{(j)}$ is the target of $G$ given by
    \begin{equation*}
        T^{(j)}(v)=\left\{\begin{array}{l l}nj&\text{if }v\in A\\
        nt&\text{if }v\in B.\end{array}\right.
    \end{equation*}
    We claim that $G$ is $ST^{(4)}$-$(S^{(j)},T^{(j)})$-degenerate for all $j\in[t]$.  Assume for contradiction that this claim does not hold.  Then suppose $\mu\in[t]$ is the smallest element of $[t]$ such that $G$ is not $ST^{(4)}$-$(S^{(\mu)},T^{(\mu)})$-degenerate.  We wish to inductively construct an $mn$-term restricted $(G,S^{(\mu)},T^{(\mu)})$-legal sequence.  Suppose $\mathcal{S}^{(\mu)}$ is our inductively constructed sequence of up to $mn$ terms where the $i^{th}$ term is $s_i^{(\mu)}=\ShSa(a_{q+1},\{b_{r+1}\})$ for all $i\in[mn]$ where $i-1=qn+r$ for some $q\in\{0,\ldots,m-1\}$ and $r\in\{0,\ldots,n-1\}$.

    Assume $\mu=1$.  Following the framework described in Remark~\ref{rem:Rigor}, it can be shown that $\mathcal{S}^{(1)}$ has $mn$ terms, $\mathcal{S}^{(1)}$ is a restricted $(G,S^{(1)},T^{(1)})$-legal sequence, and $G_{\mathcal{S}^{(\mu)}}$ is an edgeless graph.  This means $G$ is $ST^{(4)}$-$(S^{(1)},T^{(1)})$-degenerate by Observation~\ref{obs:Trivial}, which is a contradiction.

    Now, assume $\mu>1$.  Following the framework described in Remark~\ref{rem:Rigor}, it can be shown that $\mathcal{S}^{(\mu)}$ has $mn$ terms, $\mathcal{S}^{(\mu)}$ is a restricted $(G,S^{(\mu)},T^{(\mu)})$-legal sequence, $G_{\mathcal{S}^{(\mu)}}=G$,
    \begin{equation*}
        S^{(\mu)}_{\mathcal{S}^{(\mu)}}(v)=\left\{\begin{array}{l l}m(n-1)t+1&\text{if }v\in A \\m(n-1)(\mu-1)&\text{if }v\in B\end{array}\right.,
        \hspace{0.25in}\text{and}\hspace{0.25in}
        T^{(\mu)}_{\mathcal{S}^{(\mu)}}(v)=\left\{\begin{array}{l l}n(\mu-1)&\text{if }v\in A\\
        nt&\text{if }v\in B.\end{array}\right.
    \end{equation*}
    Note that this means $S^{(\mu)}_{\mathcal{S}^{(\mu)}}=S^{(\mu-1)}$ and $T^{(\mu)}_{\mathcal{S}^{(\mu)}}=T^{(\mu-1)}$.  Due to the minimality of $\mu$ we know that $G$ is $ST^{(4)}$-$(S^{(\mu-1)},T^{(\mu-1)})$-degenerate.  This implies, by Observation~\ref{obs:ExtendingToComplete}, that $G$ is $ST^{(4)}$-$(S^{(\mu)},T^{(\mu)})$-degenerate, which is a contradiction.

    Thus, for any $t\in\N$ and any $j\in[t]$, $G$ is $ST^{(4)}$-$(S^{(j)},T^{(j)})$-degenerate.  It follows that $G$ is $ST^{(4)}$-$(m(n-1)t+1,nt)$-degenerate by Lemma~\ref{lem:Monotonicity} for all $t\in\N$.  The desired result follows.
\end{proof}

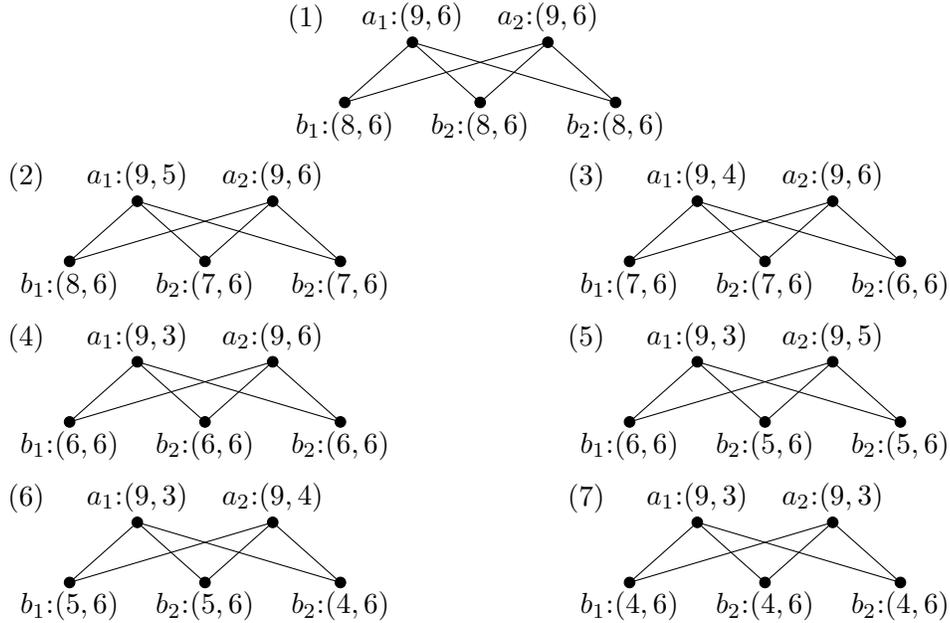
\begin{figure}[h]
    \centering
    
    \begin{enumerate}[(1)]\centering
    \item ~\hspace{1.375in}~
    \vspace{-11mm}
    
    \begin{tikzpicture}[scale=0.8,every node/.style={circle, draw, fill, inner sep=0pt, minimum size=4pt, text=black}]
			\node [anchor=center, label={[shift={(0,-0.45)}]above:$a_1\hspace{-1mm}:\hspace{-1mm}(9,6)$}, color=black] (0) at (0,1) {};
			\node [anchor=center, label={[shift={(0,-0.45)}]above:$a_2\hspace{-1mm}:\hspace{-1mm}(9,6)$}, color=black] (1) at (2.25,1) {};
			\node [anchor=center, label={[shift={(0,0.45)}]below:$b_1\hspace{-1mm}:\hspace{-1mm}(8,6)$}, color=black] (2) at (-1.125,0) {};
			\node [anchor=center, label={[shift={(0,0.45)}]below:$b_2\hspace{-1mm}:\hspace{-1mm}(8,6)$}, color=black] (3) at (1.125,0) {};
			\node [anchor=center, label={[shift={(0,0.45)}]below:$b_2\hspace{-1mm}:\hspace{-1mm}(8,6)$}, color=black] (4) at (3.375,0) {};

			\draw (0.center) to (2.center);
			\draw (0.center) to (3.center);
			\draw (0.center) to (4.center);
			\draw (1.center) to (2.center);
			\draw (1.center) to (3.center);
			\draw (1.center) to (4.center);
		\end{tikzpicture}
        \vspace{-8mm}
        
    \begin{multicols}{2}
        \item ~\hspace{1.375in}~
    \vspace{-11mm}

        \begin{tikzpicture}[scale=0.8,every node/.style={circle, draw, fill, inner sep=0pt, minimum size=4pt, text=black}]
			\node [anchor=center, label={[shift={(0,-0.45)}]above:$a_1\hspace{-1mm}:\hspace{-1mm}(9,5)$}, color=black] (0) at (0,1) {};
			\node [anchor=center, label={[shift={(0,-0.45)}]above:$a_2\hspace{-1mm}:\hspace{-1mm}(9,6)$}, color=black] (1) at (2.25,1) {};
			\node [anchor=center, label={[shift={(0,0.45)}]below:$b_1\hspace{-1mm}:\hspace{-1mm}(8,6)$}, color=black] (2) at (-1.125,0) {};
			\node [anchor=center, label={[shift={(0,0.45)}]below:$b_2\hspace{-1mm}:\hspace{-1mm}(7,6)$}, color=black] (3) at (1.125,0) {};
			\node [anchor=center, label={[shift={(0,0.45)}]below:$b_2\hspace{-1mm}:\hspace{-1mm}(7,6)$}, color=black] (4) at (3.375,0) {};

			\draw (0.center) to (2.center);
			\draw (0.center) to (3.center);
			\draw (0.center) to (4.center);
			\draw (1.center) to (2.center);
			\draw (1.center) to (3.center);
			\draw (1.center) to (4.center);
		\end{tikzpicture}
    
        \item ~\hspace{1.375in}~
    \vspace{-11mm}

        \begin{tikzpicture}[scale=0.8,every node/.style={circle, draw, fill, inner sep=0pt, minimum size=4pt, text=black}]
			\node [anchor=center, label={[shift={(0,-0.45)}]above:$a_1\hspace{-1mm}:\hspace{-1mm}(9,4)$}, color=black] (0) at (0,1) {};
			\node [anchor=center, label={[shift={(0,-0.45)}]above:$a_2\hspace{-1mm}:\hspace{-1mm}(9,6)$}, color=black] (1) at (2.25,1) {};
			\node [anchor=center, label={[shift={(0,0.45)}]below:$b_1\hspace{-1mm}:\hspace{-1mm}(7,6)$}, color=black] (2) at (-1.125,0) {};
			\node [anchor=center, label={[shift={(0,0.45)}]below:$b_2\hspace{-1mm}:\hspace{-1mm}(7,6)$}, color=black] (3) at (1.125,0) {};
			\node [anchor=center, label={[shift={(0,0.45)}]below:$b_2\hspace{-1mm}:\hspace{-1mm}(6,6)$}, color=black] (4) at (3.375,0) {};

			\draw (0.center) to (2.center);
			\draw (0.center) to (3.center);
			\draw (0.center) to (4.center);
			\draw (1.center) to (2.center);
			\draw (1.center) to (3.center);
			\draw (1.center) to (4.center);
		\end{tikzpicture}
    \end{multicols}
    \vspace{-12mm}
        
    \begin{multicols}{2}

        \item ~\hspace{1.375in}~
    \vspace{-11mm}

        \begin{tikzpicture}[scale=0.8,every node/.style={circle, draw, fill, inner sep=0pt, minimum size=4pt, text=black}]
			\node [anchor=center, label={[shift={(0,-0.45)}]above:$a_1\hspace{-1mm}:\hspace{-1mm}(9,3)$}, color=black] (0) at (0,1) {};
			\node [anchor=center, label={[shift={(0,-0.45)}]above:$a_2\hspace{-1mm}:\hspace{-1mm}(9,6)$}, color=black] (1) at (2.25,1) {};
			\node [anchor=center, label={[shift={(0,0.45)}]below:$b_1\hspace{-1mm}:\hspace{-1mm}(6,6)$}, color=black] (2) at (-1.125,0) {};
			\node [anchor=center, label={[shift={(0,0.45)}]below:$b_2\hspace{-1mm}:\hspace{-1mm}(6,6)$}, color=black] (3) at (1.125,0) {};
			\node [anchor=center, label={[shift={(0,0.45)}]below:$b_2\hspace{-1mm}:\hspace{-1mm}(6,6)$}, color=black] (4) at (3.375,0) {};

			\draw (0.center) to (2.center);
			\draw (0.center) to (3.center);
			\draw (0.center) to (4.center);
			\draw (1.center) to (2.center);
			\draw (1.center) to (3.center);
			\draw (1.center) to (4.center);
		\end{tikzpicture}

        \item ~\hspace{1.375in}~
    \vspace{-11mm}

        \begin{tikzpicture}[scale=0.8,every node/.style={circle, draw, fill, inner sep=0pt, minimum size=4pt, text=black}]
			\node [anchor=center, label={[shift={(0,-0.45)}]above:$a_1\hspace{-1mm}:\hspace{-1mm}(9,3)$}, color=black] (0) at (0,1) {};
			\node [anchor=center, label={[shift={(0,-0.45)}]above:$a_2\hspace{-1mm}:\hspace{-1mm}(9,5)$}, color=black] (1) at (2.25,1) {};
			\node [anchor=center, label={[shift={(0,0.45)}]below:$b_1\hspace{-1mm}:\hspace{-1mm}(6,6)$}, color=black] (2) at (-1.125,0) {};
			\node [anchor=center, label={[shift={(0,0.45)}]below:$b_2\hspace{-1mm}:\hspace{-1mm}(5,6)$}, color=black] (3) at (1.125,0) {};
			\node [anchor=center, label={[shift={(0,0.45)}]below:$b_2\hspace{-1mm}:\hspace{-1mm}(5,6)$}, color=black] (4) at (3.375,0) {};

			\draw (0.center) to (2.center);
			\draw (0.center) to (3.center);
			\draw (0.center) to (4.center);
			\draw (1.center) to (2.center);
			\draw (1.center) to (3.center);
			\draw (1.center) to (4.center);
		\end{tikzpicture}
    \end{multicols}
    \vspace{-12mm}
        
    \begin{multicols}{2}
    
        \item ~\hspace{1.375in}~
    \vspace{-11mm}

        \begin{tikzpicture}[scale=0.8,every node/.style={circle, draw, fill, inner sep=0pt, minimum size=4pt, text=black}]
			\node [anchor=center, label={[shift={(0,-0.45)}]above:$a_1\hspace{-1mm}:\hspace{-1mm}(9,3)$}, color=black] (0) at (0,1) {};
			\node [anchor=center, label={[shift={(0,-0.45)}]above:$a_2\hspace{-1mm}:\hspace{-1mm}(9,4)$}, color=black] (1) at (2.25,1) {};
			\node [anchor=center, label={[shift={(0,0.45)}]below:$b_1\hspace{-1mm}:\hspace{-1mm}(5,6)$}, color=black] (2) at (-1.125,0) {};
			\node [anchor=center, label={[shift={(0,0.45)}]below:$b_2\hspace{-1mm}:\hspace{-1mm}(5,6)$}, color=black] (3) at (1.125,0) {};
			\node [anchor=center, label={[shift={(0,0.45)}]below:$b_2\hspace{-1mm}:\hspace{-1mm}(4,6)$}, color=black] (4) at (3.375,0) {};

			\draw (0.center) to (2.center);
			\draw (0.center) to (3.center);
			\draw (0.center) to (4.center);
			\draw (1.center) to (2.center);
			\draw (1.center) to (3.center);
			\draw (1.center) to (4.center);
		\end{tikzpicture}

        \item ~\hspace{1.375in}~
    \vspace{-11mm}

        \begin{tikzpicture}[scale=0.8,every node/.style={circle, draw, fill, inner sep=0pt, minimum size=4pt, text=black}]
			\node [anchor=center, label={[shift={(0,-0.45)}]above:$a_1\hspace{-1mm}:\hspace{-1mm}(9,3)$}, color=black] (0) at (0,1) {};
			\node [anchor=center, label={[shift={(0,-0.45)}]above:$a_2\hspace{-1mm}:\hspace{-1mm}(9,3)$}, color=black] (1) at (2.25,1) {};
			\node [anchor=center, label={[shift={(0,0.45)}]below:$b_1\hspace{-1mm}:\hspace{-1mm}(4,6)$}, color=black] (2) at (-1.125,0) {};
			\node [anchor=center, label={[shift={(0,0.45)}]below:$b_2\hspace{-1mm}:\hspace{-1mm}(4,6)$}, color=black] (3) at (1.125,0) {};
			\node [anchor=center, label={[shift={(0,0.45)}]below:$b_2\hspace{-1mm}:\hspace{-1mm}(4,6)$}, color=black] (4) at (3.375,0) {};

			\draw (0.center) to (2.center);
			\draw (0.center) to (3.center);
			\draw (0.center) to (4.center);
			\draw (1.center) to (2.center);
			\draw (1.center) to (3.center);
			\draw (1.center) to (4.center);
		\end{tikzpicture}
    \end{multicols}
    \vspace{-10mm}
        \end{enumerate}
    \vspace{-6mm}
    
    \caption{Each (i) is a pictorial illustration of $G^{(2)}_i$ along with the ordered pairs $(S^{(2)}_i,T^{(2)}_i)$.  Notice that (2), (3), (4), (5), (6), and (7) correspond to the outputs of the operations of $\mathcal{S}^{(2)}$ when $G=K_{2,3}$ as described in the proof of Theorem~\ref{thm:Km,nUpperBound}.}
    \label{fig:K23_9,6}
\end{figure}

As stated before, Theorem~\ref{thm:Km,nUpperBound} leads to a few of the best known upper bounds on certain fractional DP-chromatic numbers.  Recall that based on the results from~\cite{DKM25}, $\chi_{_{DP}}^*(K_{2,n})\leq2.619$ for all $n\in\N$.  In addition to showing $\chi_{_{DP}}^*(K_{2,3})\leq7/3$, a simple application of Theorem~\ref{thm:Km,nUpperBound} also shows $\chi_{_{DP}}^*(K_{2,4})\leq5/2$, and $\chi_{_{DP}}^*(K_{2,5})\leq13/5$.

\subsection{Subdivisions}\label{subsec:subdivisions}

Theorem~\ref{thm: fracDP2} says that for a connected graph $G$, $\chi_{_{DP}}^*(G)=2$ if and only if $G$ contains a single even cycle and no odd cycles.  Consequently, any connected graph containing more than one cycle has fractional DP-chromatic number strictly greater than $2$. In this subsection we prove Theorem~\ref{thm:subdivisions} which shows the existence of connected graphs with many cycles whose fractional DP-chromatic numbers are close to $2$.

Our strategy for proving Theorem~\ref{thm:subdivisions} is as follows.  First, we prove a corollary to Lemma~\ref{lem:path} about the fractional strict type-$4$ degeneracy of paths.  The final ingredient of the proof of Theorem~\ref{thm:subdivisions} is Lemma~\ref{lem:RedToLinForest} which may be of independent interest.

\begin{cor}\label{cor:paths}
	For $r\in\N$ and $k\geq2r-1$, suppose $G=P_{k}$ and the vertices of $G$ in order are $v_1,\ldots,v_{k}$. Suppose $S$ is the shield of $G$ defined by $S(v_1)=S(v_{k})=1$ and $S(v_i)=r+1$ for all $i\in[2:k-1]$ and $T$ is the target of $G$ defined by $T(v_i)=r$ for all $i\in[k]$.  Then $G$ is $ST^{(4)}$-$(S,T)$-degenerate.
\end{cor}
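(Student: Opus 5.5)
We reduce the corollary to Lemma~\ref{lem:path} by peeling vertices off one end of the path until exactly $2r-1$ vertices remain. We argue by induction on $k\ge 2r-1$. The base case $k=2r-1$ is exactly Lemma~\ref{lem:path}.

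For the induction step, suppose $k\ge 2r$ and the statement holds for $P_{k-1}$. With $S,T$ as in the statement, we build a restricted $(G,S,T)$-legal sequence of $r$ terms, each equal to $\Sh(v_k)$. At every step the shaded vertex is $v_k$, and $W=\emptyset$, so legality reduces to checking nonnegativity of the new shield. Each shade of $v_k$ decreases only $S(v_{k-1})$, by $1$. After $r$ shades $v_k$ is removed (its target was $r$), and $S(v_{k-1})$ drops from $r+1$ to $1$, which stays nonnegative throughout; here we use $k-1\ge 2$, so $v_{k-1}$ is an interior vertex with shield $r+1$. The shield of $v_k$ itself is never decreased, so its value of $1$ is irrelevant.

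The final output of this sequence is therefore $P_{k-1}$ on $v_1,\ldots,v_{k-1}$, with:
\begin{itemize}
\item shield $1$ at $v_1$ and at $v_{k-1}$, and $r+1$ at all interior vertices;
\item target identically $r$.
\end{itemize}
One point needs care: when $k-1\ge 2$ the vertex $v_1$ is not adjacent to $v_k$, so $S(v_1)$ is untouched. Thus the output satisfies the hypotheses of the corollary for $P_{k-1}$, since $k-1\ge 2r-1$. By the induction hypothesis it is $ST^{(4)}$-degenerate with respect to this shield and target, and Observation~\ref{obs:ExtendingToComplete} then gives a complete, restricted extension, so $G$ is $ST^{(4)}$-$(S,T)$-degenerate.

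The verification is routine, so there is no real obstacle. The only subtlety is the degenerate small case: when $r=1$ and $k=2$, the vertex $v_{k-1}=v_1$ starts with shield $1$, not $r+1$. There $\Sh(v_2)$ would drive $S(v_1)$ to $0$, which is still legal, but the resulting shield is no longer $1$. We handle this case directly instead: $P_2$ with shield $1$ and target $1$ is completed by shading $v_2$ once and then $v_1$ once, both legal by Observation~\ref{obs:DegenerateByBackDegree}. Alternatively, we can use monotonicity (Lemma~\ref{lem:Monotonicity}) together with Observation~\ref{obs:Trivial} on the resulting single vertex.
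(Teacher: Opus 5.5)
Your proof is correct and follows the paper's argument. You induct on $k$, use Lemma~\ref{lem:path} as the base case $k=2r-1$, and shade $v_k$ exactly $r$ times to reduce to $P_{k-1}$ with the same shield and target pattern.

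The only difference is how the small cases are handled. The paper settles every $r=1$ case at once with Observation~\ref{obs:DegenerateByBackDegree}, since then $S(v_i)\geq d_G(v_i)$ and $T\equiv 1$. You instead isolate only $(r,k)=(1,2)$, and there Observation~\ref{obs:Trivial} alone finishes the job. Your suggested alternative via Lemma~\ref{lem:Monotonicity} is unnecessary, and it runs in the wrong direction, because the shield at $v_1$ drops from $1$ to $0$.
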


\begin{proof}
    Notice that when $r=1$, $S(v_i) \geq d_G(v_i)$  while $T(v_i)=1$ for all $i\in[k]$.  So, when $r=1$, the desired result follows immediately from Observation~\ref{obs:DegenerateByBackDegree}.  Therefore, we may assume $r\geq2$.

    For a fixed $r$, we claim that our result holds for all $k\geq2r-1$.  Assume for contradiction that this claim does not hold.  Then, suppose $\mu$ is the smallest nonnegative integer such that $G$ is not $ST^{(4)}$-$(S,T)$-degenerate when $k-2r+1=\mu$.  Assume $\mu=0$.  When $\mu=0$ we see that $k=2r-1$ and $G=P_{2r-1}$ while $S$ and $T$ satisfy the hypotheses of Lemma~\ref{lem:path}.  Therefore, $G$ is $ST^{(4)}$-$(S,T)$-degenerate by Lemma~\ref{lem:path}, which is a contradiction.

    Now, assume $\mu>1$.  We wish to inductively construct an $r$-term restricted $(G,S,T)$-legal sequence.  Suppose $\mathcal{S}$ is our inductively constructed sequence of up to $r$ terms where the $i^{th}$ term is $s_i=\Sh(v_k)$.  Following the framework described in Remark~\ref{rem:Rigor}, it can be shown that $\mathcal{S}$ has $r$ terms, $\mathcal{S}$ is a restricted $(G,S,T)$-legal sequence, $G_\mathcal{S}=G[V(G)\setminus\{v_k\}]=P_{k-1}$, $S_\mathcal{S}(v_1)=S_\mathcal{S}(v_{k-1})=1$ and $S_\mathcal{S}(v_i)=r+1$ for all $i\in[2:k-2]$ and $T_\mathcal{S}(v_i)=r$ for all $i\in[k-1]$.  Since $\mu$ is minimal and $(k-1)-2r+1=\mu-1$, we know that $G_\mathcal{S}$ is $ST^{(4)}$-$(S_\mathcal{S},T_\mathcal{S})$-degenerate.  This along with Observation~\ref{obs:ExtendingToComplete} implies that $G$ is $ST^{(4)}$-$(S,T)$-degenerate, which is a contradiction.
\end{proof}

We're now ready to prove Lemma~\ref{lem:RedToLinForest}.

\begin{lem}\label{lem:RedToLinForest}
	Suppose $G$ is a graph and $X$ is an independent set of vertices in $G$.  If $X$ contains $\{v\in V(G):d_G(v)\geq3\}$, $G-X$ is a linear forest, and the order of each component of $G-X$ is at least $2r-1$ for some $r\in\N$, then $G$ is $ST^{(4)}$-$(r+1,r)$-degenerate.  Consequently $\chi^*_{_{DP}}(G)\leq ST^{(4)*}(G)\leq2+1/r$.
\end{lem}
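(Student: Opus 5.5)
The plan is to shade every vertex of $X$ completely first, and then to finish on the linear forest $G-X$ using Corollary~\ref{cor:paths} together with Observation~\ref{obs:Components}. Let $S$ and $T$ be identically $r+1$ and $r$. List $X=\{x_1,\ldots,x_p\}$ and build a restricted $(G,S,T)$-legal sequence consisting of $r$ copies of $\Sh(x_1)$, then $r$ copies of $\Sh(x_2)$, and so on through $x_p$, for $pr$ terms in all.

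\textbf{Legality of the first phase.} Since $X$ is independent, shading some $x_i$ never changes $S$ at any other vertex of $X$, so each $x_i$ keeps shield $r+1\geq 0$ until it is removed. Each shade of $x_i$ lowers $S$ by $1$ at every neighbor of $x_i$, and all those neighbors lie in $G-X$.

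A vertex $v\notin X$ has degree at most $2$. Let $a(v)$ be the number of its neighbors in $X$; then $v$ loses $a(v)\cdot r$ from its shield and ends at $r+1-a(v)r$. This is nonnegative unless $a(v)=2$.

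\textbf{The obstacle: a vertex with two neighbors in $X$.} This case is the main difficulty. Such a $v$ is an isolated vertex of $G-X$, that is, a component of order $1$. The hypothesis requires every component to have order at least $2r-1$, so this can only happen when $r=1$. In that case $S=2$, $T=1$, and the shield of $v$ drops to exactly $0$, so the sequence stays legal. For $r\geq2$, every vertex of $G-X$ has at most one neighbor in $X$ (an endpoint of a path $P_k$ with $k\geq2$ already has a neighbor inside the path). I would check this carefully, including the degenerate cases where a path component has order $1$, or has order $2$ when $r=1$.

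\textbf{Final outputs.} After these $pr$ operations, $G_{\mathcal S}=G-X$. Within a path component $v_1,\ldots,v_k$, interior vertices have degree $2$ inside the path, so they have no $X$-neighbors and keep $S=r+1$. An endpoint has shield $r+1$ or $1$, according to whether it has $0$ or $1$ neighbors in $X$. All targets remain $r$.

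\textbf{Finishing on $G-X$.} By Lemma~\ref{lem:Monotonicity}, it suffices to show that each component is $ST^{(4)}$-degenerate with endpoint shields lowered to $1$. For $k\geq2$ this is exactly Corollary~\ref{cor:paths}, using $k\geq 2r-1$. For $k=1$, which forces $r=1$, Observation~\ref{obs:Trivial} applies directly. Observation~\ref{obs:Components} then combines the components.

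Observation~\ref{obs:ExtendingToComplete} gives a complete, restricted extension of the first-phase sequence, so $G$ is $ST^{(4)}$-$(r+1,r)$-degenerate. The bound $\chi^*_{_{DP}}(G)\leq ST^{(4)*}(G)\leq(2r+1)/r$ then follows from Theorem~\ref{thm: upbound}.
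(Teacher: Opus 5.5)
Your proposal is correct and follows essentially the same route as the paper: shade each vertex of $X$ exactly $r$ times, note that the resulting shield on $G-X$ is $0$, $1$, or $r+1$ (with $0$ arising only at isolated vertices when $r=1$), lower the endpoint shields via Lemma~\ref{lem:Monotonicity}, and finish with Corollary~\ref{cor:paths}, Observation~\ref{obs:Trivial}, Observation~\ref{obs:Components}, and Observation~\ref{obs:ExtendingToComplete}. The only point you leave implicit is $X=\emptyset$. There your first-phase sequence is empty, so Observation~\ref{obs:ExtendingToComplete} (stated for nonempty sequences) is not needed and your finishing argument applies directly to $G$; the paper instead treats this case separately via $1$-degeneracy, Theorem~\ref{thm:ks,kt-degenerate}, and Lemma~\ref{lem:Monotonicity}.
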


\begin{proof}
    First, note that when $X = \emptyset$, $G$ is a linear forest which means $G$ is 1-degenerate.  This means $G$ is $ST^{(4)}$-$(1,1)$-degenerate, then $G$ is $ST^{(4)}$-$(r,r)$-degenerate by Theorem~\ref{thm:ks,kt-degenerate}.  Finally, $G$ is $ST^{(4)}$-$(r+1,r)$-degenerate by Lemma~\ref{lem:Monotonicity}.  So, we may assume that $X$ is nonempty.
    
    Suppose that the components of $G-X$ are $W_1, \ldots, W_q$.  Since $G-X$ is a linear forest we know that these components are paths.  Let $$E = \{v \in V(G) : \text{$v$ is an endpoint of a component of $G-X$}\}.$$  Notice that $d_G(v)\leq2$ for all $v\in E$.  Consider the partition of $E$, $\{A,B,C\}$ where $A$ consists of all elements of $E$ adjacent to two vertices in $X$, $B$ consists of all elements of $E$ adjacent to one vertex in $X$, and $C$ consists of all elements of $E$ that are not adjacent to any vertex in $X$.  Finally, note that when $r \geq 2$, $A = \emptyset$.
    
    Suppose $S$ and $T$ are the shield and target of $G$ that are identically $r+1$ and $r$ respectively.  Additionally, suppose that the independent set $X$ has $m$ elements and those elements are $x_1,\ldots,x_m$.  We wish to construct an $mr$-term restricted $(G,S,T)$-legal sequence.  Suppose $\mathcal{S}$ is our inductively constructed sequence of at most $mr$ terms where the $i^{th}$ term is $s_i=\Sh(x_{\lceil i/r\rceil})$ (i.e., each of the vertices in $X$ is shaded $r$ times).

    Following the framework described in Remark~\ref{rem:Rigor}, it can be shown that $\mathcal{S}$ has $mr$ terms, $\mathcal{S}$ is a restricted $(G,S,T)$-legal sequence, $G_{\mathcal{S}}=G-X$, $T_{\mathcal{S}}$ is the target of $G_{\mathcal{S}}$ given by
    \begin{equation*}
        T_{\mathcal{S}}(v)=r,
    \end{equation*} and $S_\mathcal{S}$ is the shield of $G_\mathcal{S}$ given by
    \begin{equation*}
        S_\mathcal{S}(v)=\left\{\begin{array}{l l}
            0&\text{if }v\in A\\
            1&\text{if }v\in B\\
            r+1&\text{otherwise.}\\
        \end{array}\right.
    \end{equation*} 
    To see why $S_\mathcal{S}(v)=0$ when $v \in A$, recall that it is only possible for $A\neq\emptyset$ when $r=1$.  

    Now, let $S'$ be the shield of $G-X$ given by
    \begin{equation*}
        S'(v)=\left\{\begin{array}{l l}
            0&\text{if }v\in A\\
            1&\text{if }v\in B\cup C\\
            r+1&\text{otherwise.}\\
        \end{array}\right.
    \end{equation*} 
    Notice that $S'(v)\leq S_{\mathcal{S}}(v)$ for all $v\in V(G-X)$.  If we can show that $G-X$ is $ST^{(4)}$-$(S',T_\mathcal{S})$-degenerate, then it will follow that $G-X$ is $ST^{(4)}$-$(S_\mathcal{S},T_\mathcal{S})$-degenerate  by Lemma~\ref{lem:Monotonicity}. Then $G$ will be $ST^{(4)}$-$(r+1,r)$-degenerate by Observation~\ref{obs:ExtendingToComplete}.
    
    The components of $G-X$ are paths.  Suppose $S^{(i)}$ and $T^{(i)}$ are the restrictions of $S'$ and $T_\mathcal{S}$ respectively to $V(W_i)$ for each $i\in[q]$.  In the case that $W_i=P_1$, $W_i$ is $ST^{(4)}$-$(S^{(i)},T^{(i)})$-degenerate by Observation~\ref{obs:Trivial}.  In the case that $W_i$ is a path with two or more vertices, $W_i$ is $ST^{(4)}$-$(S^{(i)},T^{(i)})$-degenerate by Corollary~\ref{cor:paths}.  Consequently $G-X$ is $ST^{(4)}$-$(S',T_\mathcal{S})$-degenerate by Observation~\ref{obs:Components}.  This completes the proof.
\end{proof}

It is worth noting here that the condition on $X$ in Lemma~\ref{lem:RedToLinForest} is a restricted form of the Independent Feedback Vertex Set problem that has been well studied in the literature beginning in 2012 with Misra, Philip, Raman, and Saurabh~\cite{MPRS}.  In addition to the requirements for an Independent Feedback Vertex Set $X$ of the graph $G$: (1) $G-X$ is a forest and (2) $X$ is an independent set in $G$; Lemma~\ref{lem:RedToLinForest} adds the requirements that $G-X$ is a linear forest with sufficiently long components and that every vertex of degree $3$ or more in $G$ is contained in $X$.

Lemma~\ref{lem:RedToLinForest} can now be used to prove Theorem~\ref{thm:subdivisions}.

\begin{customthm}{\bf\ref{thm:subdivisions}} 
    Suppose $H$ is a connected graph and $G$ is a subdivision of $H$ where all of the edges are replaced by internally disjoint paths each of whose lengths is at least $2r$ for some $r\geq2$.  Then $\chi^*_{_{DP}}(G)\leq ST^{(4)*}(G)\leq2+1/r$.
\end{customthm}
\begin{proof}

    Suppose $X=V(H)$.  Since $G$ is a subdivision of $H$ where all of the edges are replaced by internally disjoint paths of length at least $2r$ for some $r\geq2$, it is clear that $X$ is an independent set in $G$.  Additionally, since every vertex of $G-X$ is one of the internal vertices of the paths that replaced the edges of $H$, $d_G(v)\leq2$ for all $v\in V(G-X)$.  Consequently, $\{v\in V(G):d_G(v)\geq3\}\subseteq X$, $G-X$ is a linear forest, and the order of each component of $G-X$ is at least $2r-1$ for some $r\in\N$.  Therefore, by Lemma~\ref{lem:RedToLinForest}, $G$ is $ST^{(4)}$-$(r+1,r)$-degenerate.  This means $\chi^*_{_{DP}}(G)\leq ST^{(4)*}(G)\leq2+1/r$.
\end{proof}

Recall Theorem~\ref{thm: fracDP2} tells us that any connected graph with more than one cycle has fractional DP-chromatic number strictly greater than $2$.  Theorem~\ref{thm:subdivisions} makes it easy to construct connected graphs with many cycles whose fractional DP-chromatic numbers are close to $2$.

\section{Lower bounds for complete bipartite graphs}\label{sec:lowerBound}

We now consider lower bounds on the fractional strict type-$4$ degeneracy of complete bipartite graphs.  Also, recall that a lower bound on the fractional strict type-$3$ or strict type-$4$ degeneracy does not provide a lower bound for the fractional DP-chromatic number (so it has less utility when studying coloring problems).  Therefore, a lower bound gives us the best possible upper bound we can hope to obtain for the fractional DP-chromatic number of a graph via its fractional degeneracy.

Interestingly, in an old version of~\cite{DKM25} written by Kaul and the second named author~\cite{KM20v1}, it is asked whether $\chi_{_{DP}}^*(K_{m,n})$ can be made arbitrarily close to $m+1$ for sufficiently large $n$.  While we know the answer to this is no in a strong sense (see Corollary~$7$ in~\cite{DKM25}),  Theorem~\ref{thm:CompleteBipartiteLowerBound} shows that the answer to the analogue of the question in the setting of strict type-4 degeneracy is yes.

\begin{customthm}{\bf\ref{thm:CompleteBipartiteLowerBound}}
	For any $\epsilon\in(0,1]$, $ST^{(4)*}(K_{m,n})\geq m+1-\epsilon$ whenever $m\in\N$ and $n\geq 5m/\epsilon$.
\end{customthm}

\begin{proof}
    
    Let $G=K_{m,n}$ where $n\geq 5m/\epsilon$.  Since $\chi^*_{_{DP}}(K_{1,n})\geq\chi^*(K_{1,n})=2$ for all $n\geq1$, the result clearly holds when $m=1$.  So we may assume $m\geq2$ for the remainder of the proof.  Assume for the sake of contradiction that $ST^{(4)*}(G)< m+1-\epsilon$.  This implies that there are $s,t\in\N$ such that $s/t<m-\epsilon$ and there exists a complete, restricted $(G,s,t)$-legal sequence $\mathcal{S}=(s_1,\ldots,s_{(m+n)t})$.  Suppose the partite sets of $G$ are $A=\{a_1,\ldots,a_m\}$ and $B=\{b_1,\ldots,b_n\}$, and assume that the vertices in each partite set are named so that if $a_j$ is removed by $s_{j'}$ and $a_\ell$ is removed by $s_{\ell'}$, then $j<\ell$ if and only if $j'>\ell'$, and likewise for the vertices in $B$. Note $a_m$ is the first vertex in $A$ that is removed by an operation in $\mathcal{S}$, $a_1$ is the last vertex in $A$ that is removed, and likewise for the vertices in $B$.  For each $i\in[(m+n)t+1]$, let $A_i=A\cap V(G_i)$, $\alpha_i=|A_i|$, $B_i=B\cap V(G_i)$, and $\beta_i=|B_i|$.

    In order to obtain a contradiction, we first verify three claims. In Claim $1$ we give an upper bound on the number of operations in $\mathcal{S}$ that shade a vertex in $B$ and have a graphical input that contains the vertices $a_1$ and $a_2$.  In Claim $2$ we give a lower bound on the number of operations that shade a vertex in $B$ and have a graphical input that contains the vertex $a_1$.  In Claim $3$, we give an upper bound on the number of vertices $b\in B$ such that for some $i$ where $A_i=\{a_1\}$ we have $b\in B_i$ and $S_i(b)\geq T_i(a_1)$.
    
    The intuitive idea behind the rest of the proof is as follows.  We use Claims $1$ and $2$ to obtain a lower bound on the number of operations that shade a vertex in $B$ and have a graphical input that contains $a_1$ but not $a_2$.  This, when combined with Claim $3$, provides a lower bound on the number of operations that shade a vertex in $B$ and do not save on $a_1$ while the graphical input still contains $a_1$.  This allows us to show $S_j(a_1)<0$ for some $j\in[(m+n)t+1]$, which is a contradiction.

    For the remainder of the proof, suppose $q$ is the smallest index such that $\alpha_{q}=1$ and $r$ is the smallest index such that $\alpha_{r}=0$.

    \noindent {\bf Claim $1$:}  It is true that
    \begin{equation*}
       \sum_{b\in B_{q}}T_{q}(b)\geq\frac{3mt}{\epsilon}.
    \end{equation*}
    We know $\alpha_{q-1}=2$. Notice each operation in $(s_1,\ldots,s_{q-1})$ that shades some $b \in B$ can save on at most one of $a_1$ and $a_2$.  Since $S_1(a_1)+S_1(a_2)=2s$, there can be at most $2s<2mt\leq 2mt/\epsilon$ operations in $(s_1,\ldots,s_{q-1})$ that shade some $b\in B$.  Therefore,
    \begin{equation*}
        \sum_{b\in B_q}T_{q}(b) \geq nt - 2s \geq\frac{5mt}{\epsilon}-2mt\geq\frac{3mt}{\epsilon}.
    \end{equation*}
    This completes the proof of Claim $1$.

    \nobreak\noindent {\bf Claim $2$:} It is true that $\beta_r<m/\epsilon$.  Consequently for any $i\in[r]$
    \begin{equation*}
        \sum_{b\in B_{r}}T_{i}(b)< \frac{mt}{\epsilon}\hspace{0.25in}\text{and specifically}\hspace{0.25in}\sum_{b\in B_{r}}T_{q}(b)< \frac{mt}{\epsilon}.
    \end{equation*}
    Since each $\ShSa$ operation that shades any $a\in A$ in $(s_1,\ldots,s_{r-1})$ can save on at most one vertex in $B_{r}$ and there have been $mt$ operations in $(s_1,\ldots,s_{r-1})$ that shade a vertex in $A$, we see
    \begin{equation*}
        \sum_{b\in B_r}S(b)-(\beta_r-1)mt\geq\sum_{b\in B_r}S_r(b)\geq0,
    \end{equation*}
    and hence
    \begin{equation*}
        \sum_{b\in B_r}S(b)-(\beta_r-1)mt=\beta_r s-(\beta_r-1)mt\geq0.
    \end{equation*}
    So,
    \begin{equation*}
        (\beta_{r}-1)mt\leq\beta_{r}s<\beta_{r}(m-\epsilon)t.
    \end{equation*}
    This means that $-mt<-\beta_{r}\epsilon t$ and consequently $\beta_{r}<m/\epsilon$.  This completes the proof of Claim $2$.

    \noindent{\bf Claim $3$:} Suppose $B^*=\{b:b\in B_{i}\text{ and }S_{i}(b)\geq T_{i}(a_1)\text{ for some }i\in[q:r-1]\}$, then
    \begin{equation*}
        |B^*|\leq \frac{m}{\epsilon}\text{, and consequently }\sum_{b\in B^*}T_{q}(b)\leq\frac{mt}{\epsilon}.
    \end{equation*}

    Let $\Sa(v,j)=|\{s:s\in\{s_1,\ldots,s_j\}\text{ and }s\text{ saves on }v\}|$ for each $j \in [(m+n)t]$ and $v \in V(G)$.  Assume for the sake of contradiction that $|B^*|>m/\epsilon$.  Consider an arbitrary $b \in B^*$.  We know there is a $j\in[q:r-1]$ such that $S_j(b)\geq T_j(a_1)$. Since there are $(m-1)t$ operations in $(s_1,\ldots,s_{j-1})$ that shade vertices in $\{a_2,\ldots,a_m\}$ and there are $t-T_{j}(a_1)$ operations in $(s_1,\ldots,s_{j-1})$ that shade $a_1$,
    \begin{equation*}
        S_j(b)=s-(m-1)t-(t-T_j(a_1))+\Sa(b,j-1)=s-mt+T_j(a_1)+\Sa(b,j-1).
    \end{equation*}
    Consequently, $S_j(b)\geq T_j(a_1)$ implies $s-mt+T_j(a_1)+\Sa(b,j-1) \geq T_j(a_1)$.  Then, since $s/t<m-\epsilon$, 
    \begin{equation*}
        \Sa(b,j-1)\geq mt-s>mt-(m-\epsilon)t=t\epsilon.
    \end{equation*}
    
    Since each operation in $\mathcal{S}$ can save on at most one vertex, this means there must be at least $t\epsilon|B^*|$ operations in $\mathcal{S}$ that save on a vertex in $B^*$.  Consequently, there are more than $mt$ operations that save on a vertex in $B^*$ since $t\epsilon|B^*|>t\epsilon(m/\epsilon)=mt$. However, there are $mt$ operations in $\mathcal{S}$ that shade a vertex in $A$. Since operations that shade vertices in $A$ are the only operations that can save on a vertex in $B^*$, we have a contradiction which completes the proof of Claim $3$.

    Let $\Delta=B_q-(B^*\cup B_r)$, $\Gamma = B_r$, and $\Lambda = B^* - B_r$.  Since $B^*\subseteq B_q$ and $B_r\subseteq B_q$ we have that $\{\Delta, \Gamma, \Lambda \}$ is a partition of $B_q$.  By Claims $1$, $2$, and $3$, we know
    \begin{equation*}
        \sum_{b\in \Delta}T_{q}(b)+\sum_{b\in \Gamma}T_{q}(b)+\sum_{b\in \Lambda}T_{q}(b)\geq\frac{3mt}{\epsilon},\hspace{0.4in}
        \sum_{b\in \Gamma}T_{q}(b)<\frac{mt}{\epsilon},\text{ and}\hspace{0.4in}
        \sum_{b\in \Lambda}T_{q}(b)\leq\frac{mt}{\epsilon},
    \end{equation*}
    which implies
    \begin{equation}\label{ineq:sumInDelta}
        \sum_{b\in \Delta}T_{q}(b)>\frac{mt}{\epsilon}.
    \end{equation}
    Let $\tilde{\mathcal{S}}$ be the set of all operations in $\{s_q,\ldots,s_{r-1}\}$ that shade a vertex in $\Delta$.  We will consider two cases: there is an $s\in\tilde{\mathcal{S}}$ that saves on $a_1$ or there is no $s\in\tilde{\mathcal{S}}$ that saves on $a_1$.

    If there is some $s\in\tilde{\mathcal{S}}$ that shades $b\in\Delta$ and saves on $a_1$, then suppose $\lambda$ is the largest index such that $s_{\lambda}=\ShSa(b,\{a_1\})$.  Also, suppose $\tau$ is the largest index such that $s_{\tau}$ shades $b$. This means
    \begin{equation*}
        S_{\lambda}(b)+T_{\lambda}(b)>S_{\lambda}(a_1)+T_{\lambda}(a_1).
    \end{equation*}
    Since $b\not\in B^*$, we know that $S_{\lambda}(b)<T_{\lambda}(a_1)$ which implies that $T_{\lambda}(b)>S_{\lambda}(a_1)+1$.  Notice that $T_{\lambda}(b)\geq2$ or else the latter inequality would imply that $S_{\lambda}(a_1)<0$.  So $\tau>\lambda$. This means that $T_{\lambda+1}(b)>S_{\lambda+1}(a_1)$.  Notice that for each $i\in[\lambda+1:\tau]$, $s_i$ is of the form: $\ShSa(a_1,W_i)$ where $W_i$ is a subset of $B_i$ of size at most $1$, $\Sh(b)$, or $\ShSa(v,W_i)$ for some $v\in B-\{b\}$ and $W_i\in\{\emptyset,\{a_1\}\}$.  One can use this fact to easily prove by induction on $i$ that $T_{i}(b)>S_{i}(a_1)$ for each $i\in[\lambda+1:\tau]$.   We know $T_{\tau}(b)=1$ and $T_{\tau}(b)>S_{\tau}(a_1)$.  So, $S_{\tau}(a_1)\leq0$.  This along with the fact that $s_{\tau}=\Sh(b)$ means $S_{\tau+1}(a_1)<0$, which is a contradiction.

    If there is no $s\in\tilde{\mathcal{S}}$ that saves on $a_1$, then we know by Inequality~\ref{ineq:sumInDelta} that there must be at least $mt/\epsilon\geq mt$ operations in $\{s_q,\ldots,s_{r-1}\}$ that shade some $b\in\Delta$ but do not save on $a_1$.  Since $S(a_1)=s\leq(m-\epsilon)t$, this implies that $S_{r-1}(a_1)<0$, which is a contradiction.
\end{proof}

Theorem~\ref{thm:CompleteBipartiteLowerBound} tells us that for any $m \in \N$ and $\epsilon>0$ there is an $n\in\N$ such that $ST^{(4)*}(K_{m,n})\geq m+1-\epsilon$; however, as a consequence of Corollary $7$ from~\cite{DKM25} we know that $\chi_{_{DP}}^*(K_{m,n})\leq0.5m+1.75$ for all $m,n\in\N$.  So, Theorem~\ref{thm:CompleteBipartiteLowerBound} shows there is a limitation in using strict type-$4$ degeneracy to bound the fractional DP-chromatic number of complete bipartite graphs. 
\vspace{0.5cm}

{\bf Acknowledgment.}  This paper grew out of a question asked by the second named author at a talk by Anton Bernshteyn on April $8$, $2022$, at the Illinois Institute of Technology\cite{B22}.  The authors would like to thank Hemanshu Kaul for his help and encouragement throughout this project.  They would also like to thank Peter Bradshaw for helpful conversations regarding this paper.

\bibliographystyle{plain}

\bibliography{References}

\appendix 
\section{Appendix}\label{sec:appendix}

    We use the framework described in Remark~\ref{rem:Rigor} to complete the proof of Theorem~\ref{thm:c2r}.

\begin{cl}\label{cl:C2r1}
    Using the notation established in the proof of Theorem~\ref{thm:c2r}, $\mathcal{S}^{(1)}$ has $r$ terms, $\mathcal{S}^{(1)}$ is a restricted $(G,S^{(1)},T^{(1)})$-legal sequence, and $G_{\mathcal{S}^{(1)}}$ is an edgeless graph.
\end{cl}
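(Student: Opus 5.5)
The plan is to follow the framework of Remark~\ref{rem:Rigor} with $\mu=1$, where $S^{(1)}(v_1)=1$, $S^{(1)}(v_i)=t+1$ for even $i$, and $S^{(1)}(v_i)=2$ for odd $i\geq 3$. The target is $T^{(1)}(v_i)=t$ for odd $i$ and $T^{(1)}(v_i)=1$ for even $i$. The $i^{th}$ term is $s_i^{(1)}=\ShSa(v_{2i},\{v_{2i-1}\})$.

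By induction on $i\in[0:r]$ I will prove three statements.
\begin{enumerate}[(1)]
\item $G_{i+1}=G-\{v_2,v_4,\ldots,v_{2i}\}$.
\item $T_{i+1}$ is the restriction of $T^{(1)}$ to $V(G_{i+1})$.
\item $S_{i+1}$ agrees with $S^{(1)}$ except at $v_3,v_5,\ldots,v_{2i+1}$ and at $v_1$. At $v_{2k+1}$ with $k\in[i]$ and $2k+1\leq 2r-1$ it equals $1$, and $S_{i+1}(v_1)$ equals $1$ if $i<r$ and $0$ if $i=r$.
\end{enumerate}
In addition, $(s_1^{(1)},\ldots,s_i^{(1)})$ is a restricted legal sequence. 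The base case $i=0$ is immediate.

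For the inductive step, consider the operation $s_{i}^{(1)}$ applied to $G_i$. Its vertex $v_{2i}$ is still present with $T_i(v_{2i})=1$, so the operation removes $v_{2i}$. The saved set is $\{v_{2i-1}\}$, which has size one, so the sequence stays restricted. The other neighbour of $v_{2i}$ is $v_{2i+1}$, or $v_1$ when $i=r$, and its shield drops by one. For $i<r$ this takes $S(v_{2i+1})$ from $2$ to $1$. For $i=r$ it takes $S(v_1)$ from $1$ to $0$. Both values are nonnegative. No other shield changes, which gives (1)–(3).

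The remaining point is the legality inequality, $S_i(v_{2i})+T_i(v_{2i})=(t+1)+1=t+2$ must exceed $S_i(v_{2i-1})+T_i(v_{2i-1})$. Here $S_i(v_{2i-1})\leq 1$: it equals $S^{(1)}(v_1)=1$ when $i=1$, and otherwise it was reduced to $1$ at step $i-1$. So the right side is at most $1+t<t+2$, and the inequality holds. This bookkeeping, namely which odd vertex has already been decremented at step $i$, is the only delicate part. It requires the decrements to be applied to $v_{2i+1}$ and never to the saved $v_{2i-1}$, and that is exactly what the choice of $W_i$ ensures.

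With $i=r$ all even-indexed vertices are removed. Hence $G_{\mathcal S^{(1)}}$ is induced on $\{v_1,v_3,\ldots,v_{2r-1}\}$, an independent set in $C_{2r}$, and is therefore edgeless. Since step $r$ is legal, the construction does not terminate early and $\mathcal S^{(1)}$ has $r$ terms.
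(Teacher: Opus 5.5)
Your proposal is correct and is essentially the paper's own argument. You use the same induction on $i\in[0:r]$ tracking $G_{i+1}$, $S^{(1)}_{i+1}$, $T^{(1)}_{i+1}$, and legality, with the same key inequality $t+2>1+t$, which holds because the saved vertex $v_{2i-1}$ has shield $1$ (either it is $v_1$, or it was decremented at the previous step), while only $v_{2i+1}$ (or $v_1$ at the last step) loses shield.
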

\begin{proof}
     We prove the following statements for each $i \in [0:r]$ by induction on $i$:
    
    \begin{enumerate}[(1)]
        \item $G_{i+1}=G-\{v_{2\ell}:\ell\in[i]\}$;
        \item $S^{(1)}_{i+1}$ and $T^{(1)}_{i+1}$ satisfy
        $$
            S^{(1)}_{i+1}(v_k)=\left\{\begin{array}{l l}
                1&\text{if }k=1\text{ and }i\neq r\\
                0&\text{if }k=1\text{ and }i=r\\
                1&\text{if }k\in\{2\ell+1:\ell\in[i],\ell\neq r\}\\
                t+1&\text{if }k\text{ is even}\\
                2&\text{otherwise}
            \end{array}\right.
            \text{ and } \;
            T^{(1)}_{i+1}(v_k)=\left\{\begin{array}{l l}
                t&\text{if }k\text{ is odd}\\
                1&\text{otherwise};
            \end{array}\right.
        $$
        \item  if $i \geq 1$, then $(s^{(1)}_1, \dots, s^{(1)}_i)$ is a restricted $(G,S^{(1)},T^{(1)})$-legal sequence.
    \end{enumerate}

    Suppose $i=0$.  Note $G_1=G=G-\emptyset$ which proves statement (1).  We see that
    \begin{align*}
        S^{(1)}_{1}(v_k)&=S^{(1)}(v_k)=\left\{\begin{array}{l l}
                1&\text{if }k=1\\
                t+1&\text{if }k\text{ is even}\\
                2&\text{otherwise}
            \end{array}\right.\\
            &\text{and}\\
            T^{(1)}_{1}(v_k)&=T^{(1)}(v_k)=\left\{\begin{array}{l l}
                t&\text{if }k\text{ is odd}\\
                1&\text{otherwise}
            \end{array}\right.,
    \end{align*} 
    which verifies statement (2).  Since $i=0$, we see that statement (3) is vacuously true.

Now, assume that the three statements of interest are true for all nonnegative integers that are at most $\iota$ where $\iota \in [0:r-1]$.  We first prove statement (1) for $i=\iota+1$.  By the induction hypotheses, we know that $G_{\iota+1}=G-\{v_{2\ell}:\ell\in[\iota]\}$.  Also, we know that $T_{\iota+1}^{(1)}(v_{2\iota+2})=1$.  Since $s^{(1)}_{\iota+1}=\ShSa(v_{2\iota+2},\{v_{2\iota+1}\})$, we see that $G_{\iota+2}=G_{\iota+1}-\{v_{2\iota+2}\}=G-\{v_{2\ell}:\ell\in[\iota+1]\}$ as desired.

    Second, we prove statement (2) for $i=\iota+1$.  By the induction hypotheses,
    \begin{equation*}
        T^{(1)}_{\iota+1}(v_k)=\left\{\begin{array}{l l}
                t&\text{if }k\text{ is odd}\\
                1&\text{otherwise}
            \end{array}\right.
    \end{equation*}
    for each $v_k\in V(G_{\iota+1})$.  Since $s^{(1)}_{\iota+1}=\ShSa(v_{2\iota+2},\{v_{2\iota+1}\})$ and $G_{\iota+2}=G-\{v_{2\ell}:\ell\in[\iota+1]\}$,
    \begin{equation*}
        T^{(1)}_{\iota+2}(v_k)=\left\{\begin{array}{l l}
                t&\text{if }k\text{ is odd}\\
                1&\text{otherwise}
            \end{array}\right.
    \end{equation*}
    for each $v_k\in V(G_{\iota+2})$.  We also know by our induction hypotheses that
    \begin{equation*}
        S^{(1)}_{\iota+1}(v_k)=\left\{\begin{array}{l l}
                1&\text{if }k=1\\
                1&\text{if }k\in\{2\ell+1:\ell\in[\iota]\}\\
                t+1&\text{if }k\text{ is even}\\
                2&\text{otherwise}
            \end{array}\right.
    \end{equation*}
    for each $v_k\in V(G_{\iota+1})$.  Since $s^{(1)}_{\iota+1}=\ShSa(v_{2\iota+2},\{v_{2\iota+1}\})$ and $N_{G_{\iota+1}}(v_{2\iota+2})=\{v_{2\iota+1},v_{2\iota+3}\}$ if $\iota\in[0:r-2]$ while $N_{G_{\iota+1}}(v_{2\iota+2})=\{v_{2\iota+1},v_{1}\}$ if $\iota=r-1$, we see that if $\iota\in[0:r-2]$, $S^{(1)}_{\iota+2}(v_{2\iota+3})=S^{(1)}_{\iota+1}(v_{2\iota+3})-1=2-1=1$ while if $\iota=r-1$, $S^{(1)}_{\iota+2}(v_{1})=S^{(1)}_{\iota+1}(v_{1})-1=1-1=0$.  As a consequence, we see that 
    \begin{equation*}
        S^{(1)}_{\iota+2}(v_k)=\left\{\begin{array}{l l}
                1&\text{if }k=1\text{ and }\iota\neq r-1\\
                0&\text{if }k=1\text{ and }\iota= r-1\\
                1&\text{if }k\in\{2\ell+1:\ell\in[\iota+1,\ell\neq r\}\\
                t+1&\text{if }k\text{ is even}\\
                2&\text{otherwise}
            \end{array}\right.
    \end{equation*}
    as desired.

    Finally, we prove statement (3) for $i=\iota+1$.  By the induction hypotheses, we know that if $\iota\geq1$ then $(s^{(1)}_1,\ldots,s^{(1)}_\iota)$ is a restricted $(G,S^{(1)},T^{(1)})$-legal sequence (in the case where $\iota=0$, the empty sequence is a restricted $(G,S^{(1)},T^{(1)})$-legal sequence).  To show that $(s^{(1)}_1,\ldots,s^{(1)}_{\iota+1})$ is a restricted $(G,S^{(1)},T^{(1)})$-legal sequence, we note that $s^{(1)}_{\iota+1}=\ShSa(v_{2\iota+2},\{v_{2\iota+1}\})$ saves on one vertex, and we now prove that $s^{(1)}_{\iota+1}$ is legal. Note 
    \begin{equation*}
        S^{(1)}_{\iota+1}(v_{2\iota+2})+T^{(1)}_{\iota+1}(v_{2\iota+2})=(t+1)+1=t+2>1+t=S^{(1)}_{\iota+1}(v_{2\iota+1})+T^{(1)}_{\iota+1}(v_{2\iota+1}).
    \end{equation*}
    Also, we see that $S^{(1)}_{\iota+2}(v)\geq0$ for all $v\in V(G_{\iota+2})$.  Hence, $s^{(1)}_{\iota+1}$ is a legal operation and $(s^{(1)}_1,\ldots,s^{(1)}_{\iota+1})$ is a restricted $(G,S^{(1)},T^{(1)})$-legal sequence.  This completes the induction step.  Consequently, $\mathcal{S}^{(1)}$ has $r$-terms, $\mathcal{S}^{(1)}$ is a restricted $(G,S^{(1)},T^{(1)})$-legal sequence, and $G_{\mathcal{S}^{(1)}}=G-\{v_{2\ell}:\ell\in[r]\}$ is an edgeless graph as required.
\end{proof}

\begin{cl}\label{cl:C2r2}
    Using the notation established in the proof of Theorem~\ref{thm:c2r}, if $\mu>1$ then $\mathcal{S}^{(\mu)}$ has $r$  terms, $G_{\mathcal{S}^{(\mu)}}=C_{2r}$, 
    \begin{equation*}
        S_{\mathcal{S}^{(\mu)}}^{(\mu)}(v_i)=\left\{\begin{array}{l l}
            \mu-1&\text{if }i=1\\
            t+1&\text{if }i\text{ is even}\\
            \mu&\text{otherwise},
        \end{array}\right.
        \hspace{0.25in}\text{and}\hspace{0.25in}
        T_{\mathcal{S}^{(\mu)}}^{(\mu)}(v_i)=\left\{\begin{array}{l l}
            t&\text{if }i\text{ is odd}\\
            \mu-1&\text{if }i\text{ is even}.
        \end{array}\right.
    \end{equation*}
\end{cl}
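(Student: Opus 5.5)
Following the framework of Remark~\ref{rem:Rigor}, exactly as in Claim~\ref{cl:C2r1}, I will prove by induction on $i\in[0:r]$ three statements about $\mathcal{S}^{(\mu)}$, where $s^{(\mu)}_i=\ShSa(v_{2i},\{v_{2i-1}\})$:
\begin{enumerate}[(1)]
\item $G_{i+1}=C_{2r}$;
\item $T^{(\mu)}_{i+1}(v_k)=\mu-1$ if $k\in\{2\ell:\ell\in[i]\}$, $T^{(\mu)}_{i+1}(v_k)=\mu$ for the remaining even $k$, and $T^{(\mu)}_{i+1}(v_k)=t$ for odd $k$; moreover $S^{(\mu)}_{i+1}(v_k)=t+1$ for even $k$, $S^{(\mu)}_{i+1}(v_k)=\mu$ for $k\in\{2\ell+1:\ell\in[i],\ \ell\ne r\}$, $S^{(\mu)}_{i+1}(v_k)=\mu+1$ for the other odd $k\ge 3$, and $S^{(\mu)}_{i+1}(v_1)=\mu$ if $i<r$ while $S^{(\mu)}_{i+1}(v_1)=\mu-1$ if $i=r$;
\item for $i\ge1$, $(s^{(\mu)}_1,\ldots,s^{(\mu)}_i)$ is a restricted $(G,S^{(\mu)},T^{(\mu)})$-legal sequence.
\end{enumerate}
The base case $i=0$ is just the definition of $S^{(\mu)},T^{(\mu)}$ (recall $S^{(\mu)}(v_1)=\mu$ and $S^{(\mu)}(v_k)=\mu+1$ for odd $k\ge3$).

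For the induction step, with $\iota\in[0:r-1]$, the operation $s^{(\mu)}_{\iota+1}$ shades $v_{2\iota+2}$. By hypothesis $T^{(\mu)}_{\iota+1}(v_{2\iota+2})=\mu>1$, since $\mu>1$. So the vertex is not removed, and its target drops to $\mu-1$; this gives (1) and the target part of (2). The neighbours of $v_{2\iota+2}$ are $v_{2\iota+1}$, which is saved on, and $v_{2\iota+3}$, or $v_1$ when $\iota=r-1$, whose shield drops by one. The vertex $v_{2\iota+3}$ has not yet been touched, so its shield goes from $\mu+1$ to $\mu$. In the last step $v_1$ goes from $\mu$ to $\mu-1$. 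Nothing else changes, which gives the shield part of (2).

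For legality in (3), the inequality to check is
\[S_{\iota+1}(v_{2\iota+2})+T_{\iota+1}(v_{2\iota+2})=(t+1)+\mu>S_{\iota+1}(v_{2\iota+1})+T_{\iota+1}(v_{2\iota+1}).\]
The right-hand side equals $\mu+t$: for $\iota\ge1$ the shield of $v_{2\iota+1}$ was already lowered to $\mu$ in the previous step, and for $\iota=0$, $S(v_1)=\mu$ by definition. Nonnegativity of the new shields is immediate because $\mu-1\ge 1$. The save set has size $1$, so the sequence is restricted.

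Setting $i=r$ yields the stated final outputs. All even-indexed targets are then $\mu-1$, and the odd shields are $\mu$ except at $v_1$, where it is $\mu-1$.

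The only delicate point is the bookkeeping at $v_1$. It is lowered exactly once, in the last step, and its initial value $\mu$, one less than the other odd vertices, is what makes the first step legal and what makes the final output match $S^{(\mu-1)}$ up to the index shift. I expect everything else to be routine.
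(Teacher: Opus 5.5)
Your proposal is correct and is essentially the paper's own proof. It uses the same three-statement induction on $i\in[0:r]$ with identical formulas for $S^{(\mu)}_{i+1}$ and $T^{(\mu)}_{i+1}$, the same observation that $T^{(\mu)}_{\iota+1}(v_{2\iota+2})=\mu>1$ keeps the graph equal to $C_{2r}$, and the same legality check $(t+1)+\mu>\mu+t$; your remark that $S^{(\mu)}(v_1)=\mu$ is exactly what makes the first operation strictly legal is a point the paper leaves implicit.
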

\begin{proof}
    We prove the following statements for each $i \in [0:r]$ by induction on $i$:
    \begin{enumerate}[(1)]
        \item $G^{(\mu)}_{i+1}=G$;
        \item $S^{(\mu)}_{i+1}$ and $T^{(\mu)}_{i+1}$ satisfy
        \begin{align*}
            S^{(\mu)}_{i+1}(v_k)&=\left\{\begin{array}{l l}
                \mu&\text{if }k=1\text{ and }i\neq r\\
                \mu-1&\text{if }k=1\text{ and }i=r\\
                \mu&\text{if }k\in\{2\ell+1:\ell\in[i], \ell \neq r \}\\
                t+1&\text{if }k\text{ is even}\\
                \mu+1&\text{otherwise}
            \end{array}\right.\\
            &\text{and}\\
            T^{(\mu)}_{i+1}(v_k)&=\left\{\begin{array}{l l}
                t&\text{if }k\text{ is odd}\\
                \mu-1&\text{if }k\in\{2\ell:\ell\in[i]\}\\
                \mu&\text{otherwise};
            \end{array}\right.
        \end{align*}
        \item  if $i \geq 1$, then $(s^{(\mu)}_1, \dots, s^{(\mu)}_i)$ is a restricted $(G,S^{(\mu)},T^{(\mu)})$-legal sequence.
    \end{enumerate}

    Suppose $i=0$.  Note $G_1=G$, which proves statement (1).  We see that
    \begin{align*}
        S^{(\mu)}_{1}(v_k)&=S^{(\mu)}(v_k)=\left\{\begin{array}{l l}
                \mu&\text{if }k=1\\
                t+1&\text{if }k\text{ is even}\\
                \mu+1&\text{otherwise}
            \end{array}\right.\\
            &\text{and}\\
            T^{(\mu)}_{1}(v_k)&=T^{(\mu)}(v_k)=\left\{\begin{array}{l l}
                t&\text{if }k\text{ is odd}\\
                \mu&\text{otherwise}
            \end{array}\right.,
    \end{align*} 
    which verifies statement (2).  Since $i=0$, we see that statement (3) is vacuously true.

    Now we assume that the three statements of interest are true for all nonnegative integers that are at most $\iota$ where $\iota\in[0:r-1]$.  We first prove statement (1) for $i=\iota+1$.  By the induction hypothesis, we know that $G_{\iota+1}=G$.    Also, since $s^{(\mu)}_{\iota+1}=\ShSa(v_{2\iota+2},\{v_{2\iota+1}\})$ and $T_{\iota+1}^{(\mu)}(v_{2\iota+2}) = \mu >1$, $v_{2\iota+2}$ is not removed from $G$ by $s^{(1)}_{\iota+1}$.  Consequently $G_{\iota+2}=G_{\iota+1}=G$ as desired.

    Second, we prove statement (2) for $i=\iota+1$.  By the induction hypothesis,
    \begin{equation*}
        T^{(\mu)}_{\iota+1}(v_k)=\left\{\begin{array}{l l}
                t&\text{if }k\text{ is odd}\\
                \mu-1&\text{if }k\in\{2\ell:\ell\in[\iota]\}\\
                \mu&\text{otherwise}
            \end{array}\right.
    \end{equation*}
    for each $v_k\in V(G_{\iota+1})$.  Since $s^{(\mu)}_{\iota+1}=\ShSa(v_{2\iota+2},\{v_{2\iota+1}\})$
    \begin{equation*}
        T^{(\mu)}_{\iota+2}(v_k)=\left\{\begin{array}{l l}
                t&\text{if }k\text{ is odd}\\
                \mu-1&\text{if }k\in\{2\ell:\ell\in[\iota+1]\}\\
                \mu&\text{otherwise}
            \end{array}\right.
    \end{equation*}
    for each $v_k\in V(G_{\iota+2})$.  We also know by our induction hypothesis that
    \begin{equation*}
        S^{(\mu)}_{\iota+1}(v_k)=\left\{\begin{array}{l l}
                \mu&\text{if }k = 1\\
                \mu&\text{if }k\in\{2\ell+1:\ell\in[\iota]\}\\
                t+1&\text{if }k\text{ is even}\\
                \mu+1&\text{otherwise}
            \end{array}\right.
    \end{equation*}
    for each $v_k\in V(G_{\iota+1})$.  Since $s^{(\mu)}_{\iota+1}=\ShSa(v_{2\iota+2},\{v_{2\iota+1}\})$ and since $N_{G_{\iota+1}}(v_{2\iota+2})=\{v_{2\iota+1},v_{2\iota+3}\}$ if $\iota\in[r-2]$ while $N_{G_{\iota+1}}(v_{2\iota+2})=\{v_{2\iota+1},v_{1}\}$ if $\iota=r-1$, we see that if $\iota\in[0:r-2]$, $S^{(\mu)}_{\iota+2}(v_{2\iota+3})=S^{(\mu)}_{\iota+1}(v_{2\iota+3})-1=(\mu+1)-1=\mu$ while if $\iota=r-1$, $S^{(\mu)}_{\iota+2}(v_{1})=S^{(\mu)}_{\iota+1}(v_{1})-1=\mu-1$.  As a consequence, 
    \begin{equation*}
        S^{(\mu)}_{\iota+2}(v_k)=\left\{\begin{array}{l l}
                \mu&\text{if }k=1\text{ and }\iota\neq r-1\\
                \mu-1&\text{if }k=1\text{ and }\iota=r-1\\
                \mu&\text{if }k\in\{2\ell+1:\ell\in[\iota+1], \ell \neq r\}\\
                t+1&\text{if }k\text{ is even}\\
                \mu+1&\text{otherwise}
            \end{array}\right.
    \end{equation*}
    as desired.

    Finally, we prove statement (3) for $i=\iota+1$.  By the induction hypothesis, we know that if $\iota\geq1$ then $(s^{(\mu)}_1,\ldots,s^{(\mu)}_\iota)$ is a restricted $(G,S^{(\mu)},T^{(\mu)})$-legal sequence (in the case where $\iota=0$, the empty sequence is a restricted $(G,S^{(\mu)},T^{(\mu)})$-legal sequence).  To show that $(s^{(\mu)}_1,\ldots,s^{(\mu)}_{\iota+1})$ is a restricted $(G,S^{(\mu)},T^{(\mu)})$-legal sequence, we note that $s^{(\mu)}_{\iota+1}=\ShSa(v_{2\iota+2},\{v_{2\iota+1}\})$ saves on one vertex, and we now prove that $s^{(\mu)}_{\iota+1}$ is legal.  Note
    \begin{equation*}
        S^{(\mu)}_{\iota+1}(v_{2\iota+2})+T^{(\mu)}_{\iota+1}(v_{2\iota+2})=(t+1)+\mu=t+\mu+1>\mu+t=S^{(\mu)}_{\iota+1}(v_{2\iota+1})+T^{(\mu)}_{\iota+1}(v_{2\iota+1}).
    \end{equation*}
    Also, we see that $S^{(\mu)}_{\iota+2}(v_k)\geq0$ for all $v_k\in V(G_{\iota+2})$.  Hence, $s^{(\mu)}_{\iota+1}$ is a legal operation and $(s^{(\mu)}_1,\ldots,s^{(\mu)}_{\iota+1})$ is a $(G,S^{(\mu)},T^{(\mu)})$-legal sequence.  This completes the induction step.  Consequently, $\mathcal{S}^{(\mu)}$ has $r$-terms, $\mathcal{S}^{(\mu)}$ is a restricted $(G,S^{(\mu)},T^{(\mu)})$-legal sequence, $G_{\mathcal{S}^{(\mu)}}=G$,
    \begin{align*}
        S_{\mathcal{S}^{(\mu)}}^{(\mu)}(v_i)&=S^{(\mu-1)}(v_i)=\left\{\begin{array}{l l}
            \mu-1&\text{if }i=1\\
            t+1&\text{if }i\text{ is even}\\
            \mu&\text{otherwise},
        \end{array}\right.\\
        &\text{and}\\
        T_{\mathcal{S}^{(\mu)}}^{(\mu)}(v_i)&=T^{(\mu-1)}(v_i)=\left\{\begin{array}{l l}
            t&\text{if }i\text{ is odd}\\
            \mu-1&\text{if }i\text{ is even},
        \end{array}\right.
    \end{align*}
    as claimed.
\end{proof}

\end{document}